\documentclass[11pt]{amsart}
\IfFileExists{lmodern.sty}{\usepackage[T1]{fontenc}\usepackage{lmodern}}{}
\usepackage{amsmath,amssymb,mathtools,mathrsfs}
\usepackage[margin=1.08in]{geometry}
\usepackage{microtype,xcolor}
\usepackage[colorlinks=true,linkcolor=blue!40!black,citecolor=blue!40!black,urlcolor=blue!40!black]{hyperref}
\newtheorem{theorem}{Theorem}[section]
\newtheorem{maintheorem}{Theorem}

\newtheorem{proposition}[theorem]{Proposition}
\newtheorem{lemma}[theorem]{Lemma}

\theoremstyle{definition}

\theoremstyle{remark}
\newtheorem{remark}[theorem]{Remark}

\newcommand{\dd}{\,d}
\newcommand{\E}{\mathbb E_\nu}
\newcommand{\KL}{D_{\mathrm{KL}}}
\newcommand{\eps}{\varepsilon}
\newcommand{\R}{\mathbb R}
\newcommand{\Cupper}{\overline{C}_U}
\newcommand{\Clower}{\underline{C}_U}
\newcommand{\bb}{\mathbf b}
\DeclareMathOperator{\Var}{Var}
\DeclareMathOperator{\diver}{div}
\DeclareMathOperator{\Ric}{Ric}

\numberwithin{equation}{section}
\hypersetup{pdftitle={Endpoint capacity at strictly stable minimal boundaries},pdfauthor={Sehong Park},pdfsubject={Endpoint p-capacity, Gaussian boundary layers, logarithmic moments, Schwarzschild deficits}}
\begin{document}
\title[Endpoint capacity at minimal boundaries]{Endpoint capacity at strictly stable minimal boundaries:\\
Gaussian layers and Schwarzschild deficits}
\author{Sehong Park}
\address{Human-Centered Artificial Intelligence Research Institute,
Ewha Womans University, Seoul, Republic of Korea}
\email{pshong21@ewha.ac.kr}
\subjclass[2020]{Primary 53C21, 53C42; Secondary 31C45, 35J92, 35B40}
\keywords{$p$-capacity, minimal boundary, nonlinear Jacobi equation,
Gaussian layer, Schwarzschild comparison}
\begin{abstract}
Let $(M^n,\mathfrak g)$, $3\le n\le7$, be a one-ended asymptotically flat manifold
whose boundary $\Sigma$ is a connected, strictly stable, outermost minimal
hypersurface. As $p\downarrow1$, the $p$-capacity of $\Sigma$ converges to its
area, by a theorem of Fogagnolo and Mazzieri. We establish a higher-order
asymptotic expansion governed by a single function on $\Sigma$:
the positive solution $g$ of the reciprocal Jacobi equation
$Jg=g^{-1}$. After a universal term, the first
correction is determined by the average of $\log g$, and the capacitary
potential drops from one to zero across a Gaussian boundary layer of width
$g\sqrt{p-1}$; under the normalized energy measure, the logarithm of the
gradient has limiting variance $\frac12$ plus the variance of $\log g$.
In dimension three, assuming in addition that
$R_{\mathfrak g}+|\mathrm{II}|^2\ge0$ along $\Sigma$, we identify the capacity
deficit of $\Sigma$ relative to the Schwarzschild horizon of the same area
at order $p-1$: it is nonnegative, vanishes exactly when the induced metric of $\Sigma$ is round
and $R_{\mathfrak g}+|\mathrm{II}|^2\equiv0$, and controls the deviation of
$g$ from a constant through a relative-entropy identity.
\end{abstract}
\maketitle

% ===== sec1_intro.tex =====
\section{Introduction}\label{sec:intro}

For $1<p<n$ the $p$-capacity of a compact set $K\subset\R^n$ is the
infimum of $\int_{\R^n}|\nabla u|^p\dd x$ over $u\in C^\infty_c(\R^n)$ with
$u\ge1$ on $K$ \cite{Mazya2011,HeinonenKilpelainenMartio1993}; for $p=2$ it
is, up to normalization, the classical capacity of \cite{PolyaSzego1951}.
For convex bodies its dependence on geometry is the subject of the
Brunn--Minkowski inequality \cite{ColesantiSalani2003}, the Hadamard
variational formula \cite{ColesantiEtAl2015}, and the capacity--area
inequalities of \cite{Xiao2017}.

As $p\downarrow1$, $p$-Laplacian problems approach their
total-variation counterparts, as seen in the convergence of
first eigenvalues to Cheeger constants \cite{KawohlFridman2003}
and of $p$-harmonic functions to functions of least gradient
\cite{Juutinen2005}. For $p$-capacitary potentials $u_p$,
a geometric limit emerges: the logarithmic transform
$-(p-1)\log u_p$ converges to the arrival time of the weak
inverse mean curvature flow of Huisken--Ilmanen
\cite{HuiskenIlmanen2001}. This connection was established
in $\mathbb R^n$ by Moser \cite{Moser2007} and extended
to manifolds under lower sectional curvature bounds
\cite{KotschwarNi2009} or Ricci curvature bounds
\cite{MariRigoliSetti2022}, with suitable conditions at infinity.
More recently, Benatti--Mari--Rigoli--Setti--Xu
\cite{BenattiMariRigoliSettiXu2026} obtained proper solutions
under Ricci or isoperimetric hypotheses through decay estimates
for the $p$-Green kernel.

At the level of capacity, the corresponding limit is the
perimeter of the strictly outward minimizing hull of a bounded
region, both in $\mathbb R^n$
\cite{AgostinianiFogagnoloMazzieri2022} and on complete
manifolds satisfying a Euclidean-type isoperimetric inequality
\cite{FogagnoloMazzieri2022}.

Beyond the leading order much less is known. For balls the first
eigenvalue is the Cheeger constant times $1+(p-1)|\log(p-1)|+O(p-1)$
\cite{BenediktDrabek2013}, as is the explicit one-dimensional eigenvalue
\cite{KajikiyaTakeuchi2025}. Peres--Wang \cite{PeresWang2026} give geometric
conditions under which, at a smooth boundary point of a Euclidean domain
where indicator data vanish, the inward normal derivative of the
$p$-harmonic extension is of order $(p-1)^{-1}$, and others under which it
decays like $e^{-c/(p-1)}$; which regime occurs is not determined locally,
and at the center of the flat bottom face of a cylinder the derivative has the
intermediate order $(p-1)^{-1/2}$.

For horizons, capacity enters through Bray's proof of the Riemannian
Penrose inequality, which rests on a mass--capacity inequality for a
minimal boundary \cite{Bray2001}. Bray--Miao \cite{BrayMiao2008} bounded the
harmonic capacity of a surface in an asymptotically flat three-manifold
with nonnegative scalar curvature by its area and Willmore energy, with
equality only for Schwarzschild; for a minimal boundary the bound is the
capacity of the Schwarzschild horizon of the same area. Xiao \cite{Xiao2016} extended these estimates to the
$p$-capacity for $1<p<3$. Subsequent work has developed
monotonicity formulas for harmonic and $p$-harmonic
potentials \cite{Miao2025,HirschMiaoTam2024}.
For a connected minimal boundary $\Sigma$ with
$H_2(M,\Sigma)=0$ in an asymptotically flat three-manifold with
$R_{\mathfrak g}\ge0$, the $p$-capacity of $\Sigma$ is at most that of the
Schwarzschild horizon of the same area for every $1<p<3$
\cite{Xiao2016,BenattiFogagnoloMazzieri2023}, with equality
only for Schwarzschild \cite{XiaYinZhou2024}.
Agostiniani--Mantegazza--Mazzieri--Oronzio
\cite{AgostinianiMantegazzaMazzieriOronzio2025} obtain the Penrose
inequality by letting $p\downarrow1$ in a mass--$p$-capacity inequality;
as they point out, the equality case is lost in this limit.

In this paper $\Sigma$ is a strictly stable, outermost minimal boundary of
an asymptotically flat manifold. Such a boundary is strictly outward
minimizing (see the proof of Lemma~\ref{lem:log-control}(c)), so by
\cite{FogagnoloMazzieri2022}, applied after smoothly filling in $\Sigma$ (see also
\cite{AgostinianiMantegazzaMazzieriOronzio2025}), the limit above reads
\begin{equation}\label{eq:area-limit}
 \lim_{p\downarrow1}C_M(p)=|\Sigma| ,
\end{equation}
with $C_M(p)$ the capacity \eqref{eq:exterior-capacity}, taken without a
normalizing constant. We ask which geometric information about $\Sigma$ the
correction terms after \eqref{eq:area-limit} contain, and how this
information controls the potential $u_p$ in the layer near $\Sigma$ where it
drops from one to zero.

Both answers are expressed through one function on $\Sigma$. Let $N$ be the
unit normal pointing into $M$, $h$ the induced metric, and
\begin{equation}\label{eq:Jacobi-intro}
 J=-\Delta_h+V,\qquad V=-\bigl(|\mathrm{II}|^2+\Ric_{\mathfrak g}(N,N)\bigr),
\end{equation}
the Jacobi operator, $\Delta_h=\diver_h\nabla$. Strict stability means
$\lambda_1(J)>0$, and then there is a unique smooth positive $g$ on $\Sigma$
with
\begin{equation}\label{eq:nonlinear-J}
 Jg=g^{-1} .
\end{equation}
This equation appears in Smith's construction of black hole initial data
with a horizon of prescribed geometry \cite{Smith2009,Smith2011}, where
existence and uniqueness are proved for $\Sigma\cong S^2$.

\subsection*{Setting and main results}
Let $(M^n,\mathfrak g)$, $3\le n\le7$, be smooth, connected, orientable and
complete, with compact connected boundary $\Sigma$ and one asymptotically
flat end,
\begin{equation}\label{eq:AF}
 \mathfrak g_{ij}=\delta_{ij}+O_2(r^{-\tau}),\qquad \tau>(n-2)/2 .
\end{equation}
Assume that $\Sigma$ is minimal, strictly stable, and outermost, that is, no
closed minimal hypersurface in $M\setminus\Sigma$ bounds a compact region
together with $\Sigma$. Mean
curvatures are taken with respect to the unit normal pointing away from
$\Sigma$. Write
\[
 A=|\Sigma|,\qquad d\nu=A^{-1}dA,\qquad
 \E\phi=\int_\Sigma\phi\dd\nu,\qquad \Var_\nu\phi=\E\phi^2-(\E\phi)^2 .
\]
The capacity and its potential are
\begin{equation}\label{eq:exterior-capacity}
\begin{gathered}
 C_M(p)=\inf\Bigl\{\int_M|\nabla u|^p\dd V:\
 u\in\operatorname{Lip}_c(\overline M),\ u|_\Sigma=1\Bigr\},\qquad 1<p<n,\\
 \Delta_pu_p=0\ \text{in }M,\qquad u_p|_\Sigma=1,\qquad u_p\to0\ \text{at infinity},
\end{gathered}
\end{equation}
with $C_M(p)=\int_M|\nabla u_p|^p\dd V$.
Throughout, $\eps=p-1$, and
\[
 \gamma_+(y)=\sqrt{\tfrac2\pi}\,e^{-y^2/2}\quad(y\ge0),\qquad
 P_+(y)=\int_y^\infty\gamma_+(s)\dd s .
\]

\begin{maintheorem}[Endpoint capacity and boundary layer]\label{thm:global}
Under the preceding assumptions the following hold as $\eps\downarrow0$.
\begin{enumerate}
\item[\textup{(i)}] \textup{(Capacity asymptotics.)} The capacity has the
expansion
\begin{equation}\label{eq:global-capacity-main}
 \log\frac{C_M(1+\eps)}{A}
 =\frac\eps2|\log\eps|
 +\eps\Bigl(-\frac12\log\frac\pi2-\E\log g\Bigr)
 +\kappa_+\eps^{3/2}+O(\eps^2),
\end{equation}
where $\kappa_+=\frac1{3A}\sqrt{\frac2\pi}\,\frac{d^3}{dt^3}\big|_{t=0}|\Sigma_t|$
and $\Sigma_t=\{\exp_x(tg(x)N(x)):x\in\Sigma\}$.
\item[\textup{(ii)}] \textup{(Gaussian layer.)} In the orthogonal collar
coordinates $(t,x)$ whose leaves are the $\Sigma_t$ of \textup{(i)}, so that
$\partial_t=gN$ on $\Sigma$, let
$R_\eps(t,x)=(x,t/\sqrt\eps)$, and let
$d\mu_p=C_M(p)^{-1}|\nabla u_p|^p\dd V$ be the normalized energy measure.
For every sufficiently small fixed $d>0$, the following
estimates hold:
\begin{equation}\label{eq:global-TV-main}
 \left\|
 (R_\eps)_\#\bigl(\mu_{1+\eps}\llcorner\{t<d\}\bigr)
 -\nu\otimes\gamma_+(y)\dd y
 \right\|_{\mathrm{TV}}
 \le C\sqrt{\eps}
\end{equation}
for measures on $\Sigma\times[0,\infty)$,
\begin{equation}\label{eq:global-profile-intro}
 \sup_{x\in\Sigma}\sup_{0\le y\le d/\sqrt\eps}\bigl|u_{1+\eps}(\sqrt\eps\,y,x)-P_+(y)\bigr|
 \le C\sqrt\eps ,
\end{equation}
and, on $\Sigma$,
\begin{equation}\label{eq:global-normal-intro}
 \Bigl|\sqrt\eps\,g\,|\nabla u_{1+\eps}|-\sqrt{\tfrac2\pi}\Bigr|
 \le C\sqrt\eps .
\end{equation}
\item[\textup{(iii)}] \textup{(Logarithmic variance.)} The variance of
$\log|\nabla u_{1+\eps}|$ under $\mu_{1+\eps}$ satisfies
\begin{equation}\label{eq:global-variance-main}
 \lim_{\eps\downarrow0}\Var_{\mu_{1+\eps}}\bigl(\log|\nabla u_{1+\eps}|\bigr)
 =\frac12+\Var_\nu(\log g).
\end{equation}
\end{enumerate}
\end{maintheorem}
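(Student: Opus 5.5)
The plan is to prove matching upper and lower bounds for $C_M(1+\eps)$ by a one-dimensional reduction in the collar $\{t<d\}$ with leaves $\Sigma_t$. This collar is adapted so that the normal speed is $g$, so $ds=g\,dt+O(t^2)$ with $s$ the distance to $\Sigma$. Its area element is $dA_t=(1+a_2(x)t^2+a_3(x)t^3+\dots)dA$. The linear term vanishes because $\Sigma$ is minimal. The quadratic coefficient is governed by the second variation $\tfrac12(g\,Jg)=\tfrac12$ after integration, using $Jg=g^{-1}$.

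For the \emph{upper bound} I would take the competitor $u(t,x)=P_+(t/\sqrt\eps)$, cut off at $t=d$ and then glued to a fixed decaying function outside. The energy of this competitor is
\[
\int_\Sigma\int_0^d \bigl(\eps^{-1/2}\gamma_+(t/\sqrt\eps)\bigr)^{1+\eps}g^{-1-\eps}\,(1+\dots)\,dt\,dA .
\]
Substituting $y=t/\sqrt\eps$ and expanding $\gamma_+^{1+\eps}=\gamma_+e^{\eps\log\gamma_+}$ and $g^{-\eps}=e^{-\eps\log g}$ gives several contributions. The factor $\eps^{-\eps/2}$ produces $\tfrac\eps2|\log\eps|$. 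The term $\E\int\gamma_+\log\gamma_+$ together with the $g^{-1}$ weight produces the constant $-\tfrac12\log\tfrac\pi2$; I expect the $g^{-1}$ to cancel against $dt$ after the correct normalization of $|\nabla t|=g^{-1}$. The term $-\E\log g$ comes from $g^{-\eps}$. The $t^2$ term of the area expansion, integrated against $\gamma_+$ with $y^2$ moment $1$, should combine with the Gaussian tail to yield exactly the $\eps$ order. The cubic term, against $\int y^3\gamma_+=2\sqrt{2/\pi}$, gives $\kappa_+\eps^{3/2}$.

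I expect the Gaussian profile to be optimal for the following reason. The $1$D Euler--Lagrange equation for $\int (|u'|^{1+\eps}/g^{\eps})(1+t^2/(2g^2)\cdots)\,dt$ in the layer scaling is precisely $u''+yu'=0$ to leading order. The curvature from the area growth, weighted through $Jg=g^{-1}$, is what makes the weight uniform in $x$.

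For the \emph{lower bound} I would use a calibration/duality argument. I would construct a vector field $X=|\nabla w|^{p-2}\nabla w$ from an approximate $p$-harmonic $w$, namely the ansatz above plus an $O(\sqrt\eps)$ corrector solving a linear problem on $\Sigma$ (this is where $Jg=g^{-1}$ is needed to kill the $x$-dependent obstruction). Then $C_M(p)\ge$ the dual flux bound
\[
C_M(p)\ \ge\ p\int X\cdot\nabla u-(p-1)\int|X|^{p'} ,
\]
with error controlled by $\|\diver X\|$. Far-field control, where $u_p$ is exponentially small in $1/\eps$ beyond $t=d$, would come from the outermost/strictly outward minimizing property of Lemma~\ref{lem:log-control} together with the IMCF-type estimate $-\eps\log u_p\to$ arrival time, which is positive away from $\Sigma$. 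Matching the two bounds up to $O(\eps^2)$ gives (i).

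For (ii) I would use the near-equality in the convexity inequality. The gap between the energies of $u_p$ and the ansatz is $O(\eps^2)$ relative to the order-$\eps$ expansion, and uniform convexity of $\xi\mapsto|\xi|^p$ in the rescaled variables converts this gap into an $L^2$ or entropy bound on $\nabla u_p-\nabla w$. Pinsker's inequality then yields the TV estimate (\ref{eq:global-TV-main}). The sup bounds (\ref{eq:global-profile-intro}) and (\ref{eq:global-normal-intro}) would follow from comparison principles: I would use $w_\pm$ built from shifted profiles $P_+((t\mp C\eps)/\sqrt\eps)\pm C\sqrt\eps$ as barriers for the $p$-Laplacian, which is uniformly elliptic in the layer because $|\nabla u|\sim\eps^{-1/2}$. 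The boundary gradient then follows by the Hopf-type squeeze.

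For (iii), on the layer $\log|\nabla u|=-\tfrac12\log\eps+\log\gamma_+(y)-\log g+o(1)$. Under $\nu\otimes\gamma_+$ the two last pieces are independent. The variance of $\log\gamma_+=-y^2/2+c$ is $\tfrac14\Var(y^2)=\tfrac14(3-1)=\tfrac12$. Passing to the limit requires uniform integrability of $\log|\nabla u|$ under $\mu_p$, which I would get from the TV bound plus the tail/log moment controls of Lemma~\ref{lem:log-control}.

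The main obstacle is the lower bound in (i) at precision $O(\eps^2)$: constructing the corrector and showing that the far region contributes only $O(\eps^2)$.
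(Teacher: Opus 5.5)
Your upper bound is fine. $P_+(t/\sqrt\eps)$ is one of the one-dimensional competitors of Section~\ref{sec:primal}, and expanding its energy with $\ell=g+O(t^2)$ and $e^w=1+\frac12t^2+a_3t^3+O(t^4)$ reproduces \eqref{eq:global-capacity-main} up to $O(\eps^2)$, with $\kappa_+=2\sqrt{2/\pi}\,\E a_3$.

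The genuine gap is the matching lower bound. You leave it open, and the mechanism you sketch would not give it. Write $\omega=P_+(t/\sqrt\eps)$ for your ansatz, since $w$ is the area exponent here, and put $X=-|\nabla\omega|^{\eps-1}\nabla\omega$. Then $|X|=|\nabla\omega|^{\eps}\approx e^{-t^2/2}$ does not decay on the layer scale. Moreover $\diver X=\ell^{-1-\eps}|\partial_t\omega|^{\eps}(w_t-t-\eps\,\partial_t\log\ell)$ with $w_t-t=3a_3t^2+O(t^3)$. Hence $\|\diver X\|_{L^1(U)}$ is of order one, and no error bound in terms of $\|\diver X\|$ reaches the relative precision $\eps^2$ that the $\kappa_+\eps^{3/2}$ term and the remainder require. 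A corrector from a linear problem on $\Sigma$ cannot remove a defect that sits in the $t$-profile ($e^{-t^2/2}$ in place of $e^{-w}$).

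The missing idea is that the dual field need not be the $p$-flux of any function. Since $dV=\ell e^w\,dt\,dA$, the field $\beta(x)e^{-w}\ell^{-1}\partial_t$ is \emph{exactly} divergence-free for every $\beta$, with flux $\int_\Sigma\beta\,dA$ through every leaf. Take $\beta=g^{-\eps}/\int_\Sigma g^{-\eps}dA$. H\"older then gives $C_U\ge\Clower=(\int_U|X|^{(1+\eps)/\eps}dV)^{-\eps}$ for all competitors at once. Because $gJg\equiv1$ makes the width of $e^{-w/\eps}$ independent of $x$, both $\Clower^{-1/\eps}$ and $\Cupper^{-1/\eps}$ equal $(\int_\Sigma g^{-\eps}dA)^{-1/\eps}\sqrt\eps\,[\sqrt{\pi/2}+\sqrt\eps\,\mathbb E_{\nu_\eps}(b_1-2a_3)+O(\eps)]$. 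Hence $\Cupper/\Clower=1+O(\eps^2)$. The flux identity on $U$, using $0<u_{1+\eps}\le a_\eps\le e^{-c/\eps}$ on $\Sigma_d$, then gives $C_M\ge(1-a_\eps)^{1+\eps}\Clower$.

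Your own route can also be closed, but only with correct barriers first. The error is really $(1+\eps)\int_U(u_{1+\eps}-\omega)\,\Delta_{1+\eps}\omega\,dV$ up to $O(a_\eps)$, with $|\Delta_{1+\eps}\omega|\le C(t^2+\eps t)$. This is $O(\eps^2)$ once you know $|u_{1+\eps}-\omega|\le C\sqrt\eps\,(1+y^k)e^{-cy^2}+a_\eps$.

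This precision also drives (ii). Convexity of $|\xi|^{1+\eps}$ is not uniform: it degenerates by the factor $\eps$ in Lemma~\ref{lem:convexity}. So an energy gap $\delta$ only yields $\|\nabla u_{1+\eps}-\nabla\omega\|_{L^{1+\eps}}^2\lesssim\delta/\eps$. The rate $\sqrt\eps$ therefore needs $\delta=O(\eps^2)$; an $O(\eps^{3/2})$ gap gives only $\eps^{1/4}$. Given that gap, your comparison of $u_{1+\eps}$ with the ansatz is sound, since the linear term is killed by the equation of $u_{1+\eps}$. The total variation bound then follows from \eqref{eq:power-L1}, and Pinsker is not needed.

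Your barriers, however, fail. For $f=P_+((t-s)/\sqrt\eps)$, the sign of $\Delta_{1+\eps}f$ is that of $-(s+w_t-t-\eps\,\partial_t\log\ell)$. Since $3a_3(x)t^2$ has no sign, a shift $s=C\eps$ controls it only for $t\lesssim\sqrt\eps$, not on $[0,d]$, and adding constants $\pm C\sqrt\eps$ does not change the operator. You have to correct the phase instead of shifting the profile. Take $f_\pm(t)\propto\int_t^de^{-\phi_\pm(\sigma)/\eps}d\sigma$ with $\phi_\pm=\frac12t^2\pm K_0(\frac{t^3}3+\eps t)$. These bracket the drift on the whole collar and still give $|f_\pm(\sqrt\eps\,y)-P_+(y)|\le C\sqrt\eps$ and $\sqrt\eps\,|f_\pm'(0)|=\sqrt{2/\pi}\,(1+O(\sqrt\eps))$.
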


\begin{remark}\label{rem:thmA}
The variance in (iii) converges at rate
$O(\sqrt\eps|\log\eps|^2)$. Moreover,
$p\mapsto\log C_M(p)$ is differentiable for $p>1$
sufficiently close to one, with
\[
 \frac{d}{dp}\log C_M(p)
 =\int_M\log|\nabla u_p|\dd\mu_p.
\]
Thus the derivative of $\log C_M$ in the exponent is the mean of the
logarithmic gradient, whose variance is given in \textup{(iii)}.
\end{remark}

In dimension three we compare $\Sigma$ with the Schwarzschild horizon of
the same area, for which $g\equiv r_A:=\sqrt{A/(4\pi)}$. Assume in addition
that $n=3$ and
\[
 \mathcal R:=\tfrac12\bigl(R_{\mathfrak g}+|\mathrm{II}|^2\bigr)\big|_\Sigma\ge0 .
\]
Let $G$ be the positive solution of $(-\Delta_h+K)G=G^{-1}$, $K$ the Gauss
curvature of $h$, which exists since these assumptions force
$-\Delta_h+K>0$, and put $d(h)=\E\log(G/r_A)$. Let $C_{\mathrm{Sch}}(p;A)$ be
the capacity of the horizon in the Schwarzschild exterior with horizon
area $A$.

\begin{maintheorem}[Schwarzschild endpoint deficit]\label{thm:sch-global}
Under these assumptions the limit
\begin{equation}\label{eq:Delta-intro}
 \Delta_M:=\lim_{\eps\downarrow0}\frac1\eps
 \log\frac{C_{\mathrm{Sch}}(1+\eps;A)}{C_M(1+\eps)}
 =\E\log\frac g{r_A}
\end{equation}
exists and $\Delta_M\ge d(h)\ge0$, where $d(h)>0$ exactly when $h$ is not
round and $\Delta_M=d(h)$ exactly when $\mathcal R\equiv0$. Moreover
\begin{equation}\label{eq:Delta-stability-intro}
 \Bigl\|\frac{g^{-2}}{\E g^{-2}}-1\Bigr\|_{L^1(\nu)}\le2\sqrt{\Delta_M},\qquad
 \E|\nabla\log g|^2+\E\mathcal R\le\frac{8\pi}A\,\Delta_M .
\end{equation}
\end{maintheorem}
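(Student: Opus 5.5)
The proof has three steps: apply Theorem~\ref{thm:global}(i) twice and subtract, compare $g$ with $G$ through a maximum principle, and finish with a two-dimensional functional inequality on $\Sigma\cong S^2$.

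\emph{Existence and formula for $\Delta_M$.} Apply Theorem~\ref{thm:global}(i) both to $M$ and to the Schwarzschild exterior with horizon area $A$. On the Schwarzschild horizon we have $\mathrm{II}=0$, $R_{\mathfrak g}=0$ and $K=r_A^{-2}$, so $V=K$ and the constant $r_A$ solves $Jg=g^{-1}$. By uniqueness, $g_{\mathrm{Sch}}\equiv r_A$. Both expansions \eqref{eq:global-capacity-main} have the same $A$, the same $\frac\eps2|\log\eps|$ term and the same constant $-\frac12\log\frac\pi2$. Subtracting them, dividing by $\eps$ and using that the remainders are $O(\eps^{3/2})$ gives \eqref{eq:Delta-intro}.

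\emph{Comparison $g\ge G$.} Since $n=3$ and $H=0$, the Gauss equation gives $\Ric_{\mathfrak g}(N,N)=\frac12R_{\mathfrak g}-K-\frac12|\mathrm{II}|^2$. Hence $V=K-\mathcal R$ and $J=L-\mathcal R$, where $L=-\Delta_h+K$. Because $\mathcal R\ge0$ we get $L\ge J>0$. Testing $L>0$ with $1$ gives $\int K>0$, so $\Sigma\cong S^2$ and $\int_\Sigma K\,dA=4\pi$.

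Now compare $G$ with $g$ through $\varphi=G/g$. A direct computation gives
\[
 LG=\varphi\,Lg-g\Delta_h\varphi-2\langle\nabla\varphi,\nabla g\rangle .
\]
At a maximum point of $\varphi$ we have $\nabla\varphi=0$ and $\Delta_h\varphi\le0$. Therefore
\[
 G^{-1}=LG\ge\varphi\,Lg=\varphi\,(g^{-1}+\mathcal R g)\ge\varphi\, g^{-1},
\]
and since $G^{-1}=\varphi^{-1}g^{-1}$ this forces $\max\varphi\le1$. So $G\le g$, and taking $\E\log$ yields $\Delta_M\ge d(h)$.

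If $\mathcal R\equiv0$, then $J=L$ and uniqueness gives $g=G$, so $\Delta_M=d(h)$. If $\mathcal R\not\equiv0$, then $w=g-G\ge0$ satisfies a linear inequality $-\Delta_h w+c\,w\ge\mathcal R g\not\equiv0$, where the coefficient $c$ comes from the mean value theorem applied to $s\mapsto s^{-1}$. The strong maximum principle then gives $w>0$ everywhere, hence $\Delta_M>d(h)$.

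\emph{$d(h)\ge0$, rigidity, and \eqref{eq:Delta-stability-intro}.} Dividing $Jg=g^{-1}$ by $g$ and integrating gives the identity
\[
 \E g^{-2}=\frac{4\pi}{A}+\E|\nabla\log g|^2-\E\mathcal R,
\]
and the analogous identity holds for $G$ with $\mathcal R=0$.

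\begin{itemize}
\item For $d(h)\ge0$, write $h=e^{2w}h_0$ with $h_0$ round of area $A$; this is possible by uniformization since $\Sigma\cong S^2$.
\item Transport the equation for $G$ to $h_0$. Then $\log G$ minimizes, or is a critical point of, a Liouville-type functional whose value at the round metric is $\log r_A$.
\item The inequality $\E\log G\ge\log r_A$ should then follow from Onofri's inequality, with equality only for $w$ the conformal factor of a Möbius map, that is, only when $h$ is round.
\end{itemize}

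For \eqref{eq:Delta-stability-intro}, set $\rho=g^{-2}/\E g^{-2}$. Its relative entropy is
\[
 \KL(\rho\nu\,\|\,\nu)=-2\E\rho\log g-\log\E g^{-2}.
\]
I would show that this is at most $2\Delta_M$, using the integrated identity above together with $\E\log\rho\le0$. Pinsker's inequality then gives the $L^1$ bound $\|\rho-1\|_{L^1(\nu)}\le2\sqrt{\Delta_M}$. Similarly, the gradient and $\mathcal R$ terms should come out of the same identity, combined with $\E g^{-2}\ge e^{-2\E\log g}$ (Jensen) and the lower bound on $\E\log G$ from the previous step.

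\emph{Main obstacle.} The hardest part is the sharp inequality $\E\log G\ge\log r_A$ with its rigidity, that is, matching the Onofri/Moser–Trudinger extremal problem to the nonlinear equation $LG=G^{-1}$. The constant $8\pi/A$ in \eqref{eq:Delta-stability-intro} is the Onofri constant, which supports this route. What I would try first is to convert the identity for $\E G^{-2}$, via the substitution $G=r_Ae^{-\psi}$, into exactly the Onofri functional of $\psi-w$.
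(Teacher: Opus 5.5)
Your first two steps are sound. Subtracting the two expansions gives \eqref{eq:Delta-intro}. Your maximum-principle argument with $\varphi=G/g$ is a valid pointwise alternative to the paper's test with $\min(g-G,0)$, and it yields $g\ge G$, with $g>G$ exactly when $\mathcal R\not\equiv0$.

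The gap is in the last step, and it starts with a sign error. Since $\int_\Sigma(-\Delta_hg)\,g^{-1}\dd A=-\int_\Sigma|\nabla\log g|^2\dd A$, dividing $Jg=g^{-1}$ by $g$ and integrating gives
\[
 \E g^{-2}+\E|\nabla\log g|^2+\E\mathcal R=\frac{4\pi}A ,
\]
not $\E g^{-2}=\frac{4\pi}A+\E|\nabla\log g|^2-\E\mathcal R$. The wrong sign led you to think that $d(h)\ge0$ needs a sharp Onofri-type inequality, which you leave as a route that ``should'' work. As written, $d(h)\ge0$ and its rigidity are therefore not proved. With the correct identity for $G$ (no $\mathcal R$ term), Jensen gives $e^{-2\E\log G}\le\E G^{-2}=r_A^{-2}-\E|\nabla\log G|^2\le r_A^{-2}$, which is $d(h)\ge0$. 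Equality forces $\nabla G\equiv0$, hence $K=G^{-2}$ is constant and $h$ is round; conversely, round $h$ gives $G\equiv r_A$. No Moser--Trudinger input is needed. Also, $8\pi/A$ is not an Onofri constant: it is $2\cdot4\pi/A$, coming from the last step below.

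The stability estimates also do not follow as you planned. Your divergence $\KL(\rho\nu\Vert\nu)=\int\rho\log\rho\dd\nu$ contains $\E(\rho\log g)$, which the integrated identity does not control. Knowing $\E\log\rho\le0$ does not bound $\int\rho\log\rho\dd\nu$. Moreover, with your sign the gradient and $\mathcal R$ terms enter with opposite signs, so no upper bound on their sum can come out of that identity.

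Use the other direction, $\KL(\nu\Vert\rho)=-\E\log\rho=\log\E g^{-2}+2\E\log g$. Put $\theta=\frac A{4\pi}(\E|\nabla\log g|^2+\E\mathcal R)$. The corrected identity reads $\E g^{-2}=\frac{4\pi}A(1-\theta)$, so $0\le\theta<1$ and
\[
 \KL(\nu\Vert\rho)=2\Delta_M+\log(1-\theta).
\]
Both $\KL(\nu\Vert\rho)$ and $-\log(1-\theta)$ are nonnegative, so each is at most $2\Delta_M$. Pinsker's inequality, which holds in either direction, gives $\|g^{-2}/\E g^{-2}-1\|_{L^1(\nu)}\le\sqrt{2\KL(\nu\Vert\rho)}\le2\sqrt{\Delta_M}$. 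The bound $\theta\le1-e^{-2\Delta_M}\le2\Delta_M$ is exactly the second estimate in \eqref{eq:Delta-stability-intro}. This is the paper's argument; it uses only the corrected identity and Jensen, not the lower bound on $\E\log G$.
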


\begin{remark}
In particular, $\Delta_M=0$ exactly when $h$ is round
and $\mathcal R\equiv0$. Otherwise,
$C_M(p)<C_{\mathrm{Sch}}(p;A)$ for all $p>1$
sufficiently close to one.

The stability estimates in \eqref{eq:Delta-stability-intro}
follow from the identity
\[
 2\Delta_M=\KL(\nu\Vert\rho)-\log(1-\theta),
\]
where
\[
 d\rho=\frac{g^{-2}}{\E g^{-2}}\dd\nu,
 \qquad
 \theta=\frac A{4\pi}
 \bigl(\E|\nabla\log g|^2+\E\mathcal R\bigr).
\]
Pinsker's inequality gives the first estimate, while
the nonnegativity of the two terms on the right gives
the second. The latter also controls the excess variance
$\Var_\nu(\log g)$ in Theorem~\ref{thm:global}(iii)
through the Poincar\'e inequality.

Unlike the global Schwarzschild comparison recalled above,
Theorem~\ref{thm:sch-global} identifies the endpoint deficit
using only the boundary curvature condition, without assuming
$R_{\mathfrak g}\ge0$ throughout $M$.
The rigidity statement concerns the boundary geometry
and does not imply that the exterior is Schwarzschild.
\end{remark}

\subsection*{The selection principle}
A positive lapse $v$ on $\Sigma$ gives a collar swept out by the normal
graphs of $tv$, and two elementary bounds for its capacity: an upper bound
from one-dimensional competitors \eqref{eq:primal-bound} and, when $Jv>0$, a
lower bound from a divergence-free field of unit flux \eqref{eq:dual-bound}.
Their coefficients of $\eps$ differ by half the Jensen gap
$\log\E(vJv)-\E\log(vJv)$ \eqref{eq:gap-Jensen}, so the bounds agree at first
order exactly when $v$ is a constant multiple of $g$. For $v=g$ they agree up
to relative order $\eps^2$, and the degenerate convexity of $|\cdot|^{1+\eps}$
turns this into $O(\sqrt\eps)$-closeness of the gradient of the collar
minimizer to an explicit Gaussian field \eqref{eq:near-min}.

\subsection*{Organization}
Section~\ref{sec:prelim} sets up the local analysis at a minimal
hypersurface: orthogonal collar coordinates adapted to a lapse $v$,
an upper bound for the collar capacity from one-dimensional competitors
and a lower bound from a divergence-free flux field, and a convexity
estimate. Section~\ref{sec:local} selects the lapse $g$, matches the bounds
and identifies $\kappa_+$, gives the half-Gaussian energy law and the
error-function profile, and computes the logarithmic mean and variance.
Section~\ref{sec:global} uses the weak inverse mean curvature flow to show
that $u_p$ is exponentially small outside the collar
(Lemma~\ref{lem:log-control}), so that the capacity, gradient, energy
measure and profile of the global problem agree with those of the collar
problem up to exponentially small errors
(Proposition~\ref{prop:localization}); this proves
Theorem~\ref{thm:global}. Section~\ref{sec:boundary-gap} obtains the
Schwarzschild expansion from Theorem~\ref{thm:global} and proves
Theorem~\ref{thm:sch-global}. Appendix~\ref{app:asymptotics} contains the
Laplace-type expansions, the two-sided layer and the one-dimensional
barriers; Appendix~\ref{app:imcf} shows
that the weak inverse mean curvature flow leaves $\Sigma$ quadratically,
at normal distance $g\sqrt{2\sigma}$ at time $\sigma$.

% ===== sec2_prelim.tex =====
\section{Preliminaries and primal--dual asymptotics}\label{sec:prelim}

Let $(M^n,\mathfrak g)$, $n\ge2$, be a smooth Riemannian manifold,
possibly with boundary, and let $\Sigma\subset M$
be a closed connected embedded minimal
hypersurface, possibly a boundary component of $M$.
Fix a unit normal $N$ pointing into $M$ when $\Sigma$
is a boundary component, and toward the chosen side
when $\Sigma$ is an interior hypersurface. We consider
a one-sided tubular neighborhood of $\Sigma$ in the
direction of $N$.

Strict stability is used in the form
\begin{equation}\label{eq:strict-stability}
 \int_\Sigma vJv\dd A=\int_\Sigma\bigl(|\nabla v|^2+Vv^2\bigr)\dd A
 \ \ge\ \lambda_1(J)\int_\Sigma v^2\dd A\qquad(v\in H^1(\Sigma)),\qquad \lambda_1(J)>0 .
\end{equation}
Throughout, $0<\eps\le1$ and all capacities are taken with the exponent
$1+\eps$; since $\eps$ is fixed in the local analysis, it is suppressed from
the notation of the collar capacities. All geometric data are fixed before
$\eps\downarrow0$, and $C,c$ denote positive constants depending only on the
fixed data.

\subsection{Collar coordinates and the collar capacity}\label{sec:coordinates}
Let $v>0$ be smooth on $\Sigma$. Moving $\Sigma$ along the normal graphs
$\exp_x(tv(x)N(x))$ and removing the tangential part of the velocity by a
$t$-dependent diffeomorphism of $\Sigma$ gives orthogonal coordinates on a
one-sided collar of $\Sigma$. We identify this collar with
$[0,d_1)\times\Sigma$ for some $d_1>0$, so that $\Sigma=\{0\}\times\Sigma$,
$\partial_t=vN$ on $\Sigma$, and
\begin{equation}\label{eq:collar-metric}
 \mathfrak g=\ell(t,x)^2\dd t^2+h_t,\qquad
 dV=\ell\,e^{w}\dd t\dd A,\qquad e^{w(t,x)}=\frac{dA_{h_t}}{dA_h}(x),
\end{equation}
where $h_t$ is a family of metrics on $\Sigma$ with $h_0=h$ and
$\ell(0,\cdot)=v$. All computations below take place on $[0,d]\times\Sigma$
for a small fixed $d<d_1$.
With $\mathbf n=\ell^{-1}\partial_t$ the unit normal of the leaves,
$\partial_th_t=2\ell\,\mathrm{II}_t$, so
$w_t=\frac12\operatorname{tr}_{h_t}\partial_th_t=\ell H_t$, where $H_t$ is the
mean curvature of the leaf $\{t\}\times\Sigma$ with respect to $\mathbf n$.
At $t=0$, $H_0=0$ gives $w=w_t=0$, and the linearization
$\partial_tH_t|_{t=0}=J\ell(0,\cdot)=Jv$ of the mean curvature at a minimal
hypersurface \cite{ColdingMinicozzi2011} gives
$w_{tt}(0,\cdot)=vJv$. Hence, with smooth $b_1,a_3$ on $\Sigma$ and
remainders uniform in $x$,
\begin{equation}\label{eq:collar-expansions}
 \ell=v\bigl(1+b_1t+O(t^2)\bigr),\qquad
 w=\tfrac12\,vJv\,t^2+a_3t^3+O(t^4).
\end{equation}
For an interior two-sided hypersurface the coordinates exist for both signs
of $t$.

Let $U=(0,d)\times\Sigma$ and $\Sigma_d=\{d\}\times\Sigma$, and define the collar
capacity, its minimizer and its energy measure by
\begin{equation}\label{eq:capacity}
\begin{gathered}
 C_U=\inf\Bigl\{\int_U|\nabla u|^{1+\eps}\dd V:\ u\in W^{1,1+\eps}(U),\ u|_\Sigma=1,\ u|_{\Sigma_d}=0\Bigr\}
 =\int_U|\nabla v_\eps|^{1+\eps}\dd V,\\
 d\mu_\eps^U=C_U^{-1}|\nabla v_\eps|^{1+\eps}\dd V .
\end{gathered}
\end{equation}
The minimizer $v_\eps$ exists and is unique, and $0\le v_\eps\le1$
\cite{HeinonenKilpelainenMartio1993}; $v_\eps\in C^{1,\alpha}(\overline U)$
for each fixed $\eps$ \cite{Lieberman1988}, and $v_\eps$ is smooth on
$\{\nabla v_\eps\ne0\}$.

\subsection{Upper and lower bounds}\label{sec:primal}
\emph{The upper bound.} Let $\phi$ be nonincreasing with
$\phi(0)=1$ and $\phi(d)=0$. For $u=\phi(t)$,
$|\nabla u|=|\phi'|/\ell$ and
\begin{align*}
 \int_U|\nabla u|^{1+\eps}\dd V&=\int_0^d|\phi'(t)|^{1+\eps}\Bigl(\int_\Sigma\ell^{-\eps}e^{w}\dd A\Bigr)\dd t,\\
 1=\int_0^d|\phi'|\dd t&\le\Bigl(\int_0^d|\phi'|^{1+\eps}\!\int_\Sigma\ell^{-\eps}e^w\Bigr)^{\frac1{1+\eps}}
 \Bigl(\int_0^d\Bigl(\int_\Sigma\ell^{-\eps}e^w\Bigr)^{-\frac1\eps}\dd t\Bigr)^{\frac\eps{1+\eps}},
\end{align*}
with equality when $|\phi'|\propto(\int_\Sigma\ell^{-\eps}e^w)^{-1/\eps}$.
Hence
\begin{equation}\label{eq:primal-bound}
 C_U\le\Cupper:=\Bigl(\int_0^d\Bigl(\int_\Sigma\ell(t,x)^{-\eps}e^{w(t,x)}\dd A\Bigr)^{-1/\eps}\dd t\Bigr)^{-\eps}.
\end{equation}

\emph{The lower bound.} Suppose $Jv>0$ and put
\begin{equation}\label{eq:flux-field}
 X=\frac{(Jv/v)^{\eps/2}(x)}{\int_\Sigma(Jv/v)^{\eps/2}\dd A}\;\frac{e^{-w(t,x)}}{\ell(t,x)}\,\partial_t .
\end{equation}
Since $dV=\ell e^w\dd t\dd A$ in the coordinates \eqref{eq:collar-metric},
$X$ is a smooth vector field on $\overline U$ with
\begin{equation}\label{eq:flux-divfree}
 \diver X=0,\qquad
 \int_{\{t\}\times\Sigma}\langle X,\mathbf n\rangle\dd A_{h_t}=1\qquad(0\le t\le d),
\end{equation}
so the divergence theorem gives, for every $u\in W^{1,1+\eps}(U)$ with
$u=1$ on $\Sigma$ and $u=0$ on $\Sigma_d$,
\begin{equation}\label{eq:flux-identity}
 -\int_UX\cdot\nabla u\dd V=1 .
\end{equation}
By H\"older's inequality,
$1\le\bigl(\int_U|X|^{(1+\eps)/\eps}\bigr)^{\eps/(1+\eps)}\bigl(\int_U|\nabla u|^{1+\eps}\bigr)^{1/(1+\eps)}$, so
\begin{equation}\label{eq:dual-bound}
 \Clower:=\Bigl(\int_U|X|^{(1+\eps)/\eps}\dd V\Bigr)^{-\eps}\le C_U\le\Cupper .
\end{equation}

\subsection{Asymptotics of the bounds and the comparison field}\label{sec:asymptotics}
Let $v>0$ be smooth; then $\E(vJv)>0$ by \eqref{eq:strict-stability}. The Laplace expansion of the
two integrals in \eqref{eq:primal-bound} and
\eqref{eq:dual-bound}, carried out in Appendix~\ref{app:bounds}, gives
\begin{equation}\label{eq:primal-full}
 \log\frac{\Cupper}A
 =\frac\eps2|\log\eps|
 +\eps\Bigl(\frac12\log\frac{\E(vJv)}{\pi/2}-\E\log v\Bigr)+O(\eps^{3/2}),
\end{equation}
and if $Jv>0$,
\begin{equation}\label{eq:dual-full}
 \log\frac{\Clower}A
 =\frac\eps2|\log\eps|
 +\eps\Bigl(\frac12\E\log\frac{Jv}{v}-\frac12\log\frac\pi2\Bigr)+O(\eps^{3/2}).
\end{equation}
The remainders depend on $v$ and $d$; the displayed coefficients depend
only on $v$.

\emph{The comparison field.}
To describe the minimizer $v_\eps$, we seek an explicit
vector field approximating $-\nabla v_\eps$ whose
$L^{1+\eps}$ energy equals the lower bound $\Clower$.
The dual field $X$ provides such a comparison field:
\begin{equation}\label{eq:b-definition}
 |\bb|^{\eps-1}\bb=\Clower\,X,
 \qquad\text{equivalently}\qquad
 \bb=\Clower^{1/\eps}|X|^{(1-\eps)/\eps}X .
\end{equation}
Thus $\bb$ is a positive multiple of $\partial_t$, our candidate for
$-\nabla v_\eps$. Since $\Clower^{1/\eps}=(\int_U|X|^{(1+\eps)/\eps}\dd V)^{-1}$,
\begin{equation}\label{eq:b-identities}
 \int_U|\bb|^{1+\eps}\dd V=\Clower,\qquad \int_UX\cdot\bb\dd V=1 ,
\end{equation}
and, by \eqref{eq:flux-field},
\begin{equation}\label{eq:b-scalar-general}
 |\bb|=\frac{|X|^{1/\eps}}{\int_U|X|^{(1+\eps)/\eps}\dd V}
 =\frac{\bigl(\int_\Sigma(Jv/v)^{\eps/2}\dd A\bigr)^{-1/\eps}}{\int_U|X|^{(1+\eps)/\eps}\dd V}
 \Bigl(\frac{Jv}v\Bigr)^{1/2}e^{-w/\eps},
\end{equation}
a fixed profile in $x$ times the Gaussian factor $e^{-w/\eps}$.

We conclude this section with a convexity estimate used
to compare the collar minimizer with the comparison field
and, later, the global potential with the collar minimizer.
All norms below are taken in $L^{1+\eps}$ unless otherwise indicated.

\begin{lemma}\label{lem:convexity}
For $0<\eps\le1$ and vectors $a,b$,
\begin{equation}\label{eq:convexity}
 \frac{|a|^{1+\eps}-|b|^{1+\eps}}{1+\eps}
 -|b|^{\eps-1}b\cdot(a-b)
 \ge \frac\eps2 (|a|+|b|)^{\eps-1}|a-b|^2,
\end{equation}
where $|b|^{\eps-1}b:=0$ for $b=0$ and the right-hand side is taken to be zero when $a=b=0$.
\end{lemma}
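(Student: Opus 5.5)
We reduce \eqref{eq:convexity} to a one-variable Taylor estimate along the segment from $b$ to $a$. Put $F(z)=|z|^{1+\eps}/(1+\eps)$, so that $\nabla F(z)=|z|^{\eps-1}z$ for $z\ne0$ and $\nabla F(0)=0$. Then $F\in C^1$, and the left-hand side of \eqref{eq:convexity} is the Bregman remainder $F(a)-F(b)-\nabla F(b)\cdot(a-b)$. If $a=b$ there is nothing to prove, so we assume $a\ne b$ and set $z(s)=b+s(a-b)$ for $s\in[0,1]$.

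The segment meets the origin in at most one point, and $F$ is smooth away from $0$. Hence $s\mapsto F(z(s))$ is $C^1$ with absolutely continuous derivative, and the integral form of Taylor's remainder gives
\[
F(a)-F(b)-\nabla F(b)\cdot(a-b)=\int_0^1(1-s)\,D^2F(z(s))[a-b,a-b]\dd s .
\]
For $z\ne0$ we have $D^2F(z)=|z|^{\eps-1}\bigl(I+(\eps-1)\hat z\otimes\hat z\bigr)$ with $\hat z=z/|z|$. Its eigenvalues are $|z|^{\eps-1}$ and $\eps|z|^{\eps-1}$, so $D^2F(z)[\xi,\xi]\ge\eps|z|^{\eps-1}|\xi|^2$.

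Since $\eps-1\le0$, the weight $|z|^{\eps-1}$ is decreasing in $|z|$. By the triangle inequality, $|z(s)|\le(1-s)|b|+s|a|\le|a|+|b|$, and therefore $|z(s)|^{\eps-1}\ge(|a|+|b|)^{\eps-1}$. This also holds at the possible point where $z(s)=0$, where the weight is $+\infty$. Substituting into the integral gives
\[
\ge\eps(|a|+|b|)^{\eps-1}|a-b|^2\int_0^1(1-s)\dd s=\frac\eps2(|a|+|b|)^{\eps-1}|a-b|^2,
\]
which is \eqref{eq:convexity}. When $\eps=1$ the estimate is exact.

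The only delicate step is justifying the Taylor formula when the segment passes through $0$, where $D^2F$ blows up like $|z|^{\eps-1}$. This is handled by noting that $|z(s)|^{\eps-1}$ is integrable in $s$, because $\eps-1>-1$ and $|z(s)|$ vanishes at most linearly at a single point. Consequently $\frac{d}{ds}F(z(s))=\nabla F(z(s))\cdot(a-b)$ is absolutely continuous, so the integral identity holds. Alternatively, one can approximate by the case $0\notin[b,a]$ and pass to the limit using continuity of both sides. The degenerate case $a=b=0$ is covered by the stated convention.
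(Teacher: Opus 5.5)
Your proof is correct and follows the paper's argument exactly. Both use Taylor's formula with integral remainder along $z_s=b+s(a-b)$, the Hessian bound $\nabla^2F\ge\eps|z|^{\eps-1}I$, and the monotonicity of $|z|^{\eps-1}$ together with $|z_s|\le|a|+|b|$. When the segment passes through the origin, the Taylor formula is justified by the integrability of $|z_s|^{\eps-1}$ in $s$; your eigenvalue computation and absolute-continuity argument just spell out what the paper states briefly.
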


\begin{proof}
The function $f(z)=|z|^{1+\eps}/(1+\eps)$ satisfies
$\nabla f=|z|^{\eps-1}z$ and
$\nabla^2 f\ge\eps|z|^{\eps-1}I$ away from the origin.
Along $z_s=b+s(a-b)$, since $|z_s|\le|a|+|b|$,
$\eps-1\le0$, and $s\mapsto\nabla^2f(z_s)$ is integrable even when the
segment meets the origin,
\begin{align*}
 f(a)-f(b)-\nabla f(b)\cdot(a-b)
 &=\int_0^1(1-s)\nabla^2f(z_s)(a-b,a-b)\dd s\\
 &\ge\eps(|a|+|b|)^{\eps-1}|a-b|^2
 \int_0^1(1-s)\dd s,
\end{align*}
which proves the claim.
\end{proof}

We record two consequences of the preceding estimate.
First, the mean value theorem gives
\[
 \bigl||a|^{1+\eps}-|b|^{1+\eps}\bigr|
 \le (1+\eps)(|a|+|b|)^\eps|a-b|.
\]
Integrating and applying H\"older's inequality with
exponents $(1+\eps)/\eps$ and $1+\eps$, we obtain
\begin{align*}
 &\bigl\||a|^{1+\eps}-|b|^{1+\eps}\bigr\|_{L^1}\\
 &\quad\le (1+\eps)
 \left(\int(|a|+|b|)^{1+\eps}\dd V\right)^{\frac{\eps}{1+\eps}}
 \left(\int|a-b|^{1+\eps}\dd V\right)^{\frac1{1+\eps}}\\
 &\quad\le (1+\eps)
 (\|a\|+\|b\|)^\eps\|a-b\|.
\end{align*}
Thus,
\begin{equation}\label{eq:power-L1}
 \bigl\||a|^{1+\eps}-|b|^{1+\eps}\bigr\|_{L^1}
 \le (1+\eps)(\|a\|+\|b\|)^\eps\|a-b\|.
\end{equation}

Next, suppose that
\[
 \int |b|^{\eps-1}b\cdot(a-b)\dd V=0.
\]
Integrating \eqref{eq:convexity} yields
\begin{equation}\label{eq:weighted-convexity}
 \int (|a|+|b|)^{\eps-1}|a-b|^2\dd V
 \le \frac{2}{\eps(1+\eps)}
 \bigl(\|a\|^{1+\eps}-\|b\|^{1+\eps}\bigr).
\end{equation}
On the other hand, for $0<\eps<1$,
H\"older's inequality with exponents
$2/(1+\eps)$ and $2/(1-\eps)$ gives the following.
\begin{align*}
 \|a-b\|^{1+\eps}
 &=\int
 \left[(|a|+|b|)^{\eps-1}|a-b|^2\right]^{\frac{1+\eps}{2}}
 (|a|+|b|)^{\frac{1-\eps^2}{2}}\dd V\\
 &\le
 \left(\int (|a|+|b|)^{\eps-1}|a-b|^2\dd V
 \right)^{\frac{1+\eps}{2}}
 \left(\int(|a|+|b|)^{1+\eps}\dd V
 \right)^{\frac{1-\eps}{2}}.
\end{align*}
Raising both sides to the power $2/(1+\eps)$
and using Minkowski's inequality, we find
\[
 \|a-b\|^2
 \le
 \left(\int (|a|+|b|)^{\eps-1}
 |a-b|^2\dd V\right)
 (\|a\|+\|b\|)^{1-\eps}.
\]
The same inequality holds directly when $\eps=1$.
Combining this with \eqref{eq:weighted-convexity},
we conclude that
\begin{equation}\label{eq:Lp-defect}
 \|a-b\|^2
 \le \frac{2}{\eps(1+\eps)}
 \bigl(\|a\|^{1+\eps}-\|b\|^{1+\eps}\bigr)
 (\|a\|+\|b\|)^{1-\eps}.
\end{equation}

% ===== sec3_local.tex =====
\section{Selection and local analysis}\label{sec:local}

By \eqref{eq:primal-full} and \eqref{eq:dual-full}, the finite coefficients
of the two bounds built from a lapse $v$ with $Jv>0$ differ by
\begin{equation}\label{eq:gap-Jensen}
 \Bigl(\frac12\log\frac{\E(vJv)}{\pi/2}-\E\log v\Bigr)
 -\Bigl(\frac12\E\log\frac{Jv}v-\frac12\log\frac\pi2\Bigr)
 =\frac12\bigl(\log\E(vJv)-\E\log(vJv)\bigr)\ \ge\ 0.
\end{equation}
The difference is half the Jensen gap for the logarithm
applied to $vJv$, and vanishes exactly when $vJv$ is
constant. Since both coefficients are invariant under
$v\mapsto\lambda v$, we may normalize this constant to one.
Thus the two bounds agree at order $\eps$ precisely when
$v$ is a positive constant multiple of a solution of
$Jg=g^{-1}$.

\begin{lemma}\label{lem:selection}
The equation $Jg=g^{-1}$ admits a unique smooth positive
solution $g$ on $\Sigma$.
\end{lemma}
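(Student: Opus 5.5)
The plan is to obtain $g$ variationally, as the minimizer of a strictly convex functional, and to prove uniqueness by a separate comparison argument. For $v\in H^1(\Sigma)$ with $v>0$ a.e., set
\[
 \mathcal F(v)=\frac12\int_\Sigma\bigl(|\nabla v|^2+Vv^2\bigr)\dd A-\int_\Sigma\log v\dd A .
\]
Critical points satisfy $Jv=v^{-1}$. By \eqref{eq:strict-stability} the quadratic part is coercive: it is bounded below by $\tfrac{\lambda_1(J)}2\|v\|_{L^2}^2$, and hence, after interpolating with $\int|\nabla v|^2$ using that $V$ is bounded, by $c\|v\|_{H^1}^2$. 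Since $-\log$ is convex and $\log v\le v$, we get $\mathcal F(v)\ge c\|v\|_{H^1}^2-C\|v\|_{L^1}$, which is bounded below and coercive. The functional $\mathcal F$ is also strictly convex on the convex set $\{v>0\}$, so a minimizer exists by the direct method. Weak lower semicontinuity of $-\int\log v$ follows from Fatou's lemma applied to $\max(-\log v,0)$, together with strong $L^1$ convergence for the positive part. Replacing $v$ by $|v|$ does not increase $\mathcal F$, so we may assume $v\ge0$.

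The main obstacle is regularity and positivity: the minimizer is only known to satisfy $\int\log v>-\infty$, so $v>0$ a.e., and the singular term $v^{-1}$ has to be controlled. I would first derive the Euler--Lagrange equation through one-sided variations $v+s\varphi$ with $\varphi\ge0$ and $s>0$, which stay in the admissible set. Monotone convergence then shows that $v^{-1}\in L^1_{\mathrm{loc}}$ and that $Jv\ge v^{-1}$ weakly. In particular $-\Delta_hv+\|V\|_\infty v\ge0$. The weak Harnack inequality for supersolutions then gives $\operatorname{ess\,inf}v>0$ on the connected closed manifold $\Sigma$, because $v\not\equiv0$. Once $v\ge c>0$, two-sided variations are allowed, so $Jv=v^{-1}$ holds weakly with a bounded right-hand side. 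Elliptic bootstrapping ($W^{2,q}$, then $C^{1,\alpha}$, then Schauder) yields smoothness.

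For uniqueness, suppose $g_1,g_2$ are smooth positive solutions. Strict convexity of $\mathcal F$ already implies that any smooth positive critical point is the unique minimizer, but I would also give a direct argument. Test the difference of the two equations against $g_1-g_2$:
\[
 \int_\Sigma (g_1-g_2)J(g_1-g_2)\dd A=\int_\Sigma (g_1-g_2)\bigl(g_1^{-1}-g_2^{-1}\bigr)\dd A\le0 .
\]
By \eqref{eq:strict-stability} the left-hand side is at least $\lambda_1(J)\|g_1-g_2\|_{L^2}^2$, so $g_1=g_2$.
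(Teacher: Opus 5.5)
Your proof is correct. Your uniqueness argument is exactly the paper's: subtract the equations, test with $g_1-g_2$, and use $\lambda_1(J)>0$.

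For existence you take a genuinely different route. The paper uses the sub- and supersolutions $\delta\varphi\le\delta^{-1}\varphi$ built from the positive first eigenfunction $\varphi$ of $J$. Since $J\varphi=\lambda_1(J)\varphi$, the two required inequalities reduce to $\lambda_1(J)\delta^2\varphi^2\le1\le\lambda_1(J)\delta^{-2}\varphi^2$, which hold for small $\delta$. The solution is then trapped between two positive barriers, so the singular term $g^{-1}$ is bounded from the outset and regularity is routine.

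Your direct method instead has to recover positivity by hand. You need lower semicontinuity of $-\int\log v$, then one-sided variations with monotone convergence to get $Jv\ge v^{-1}$ and $v^{-1}\in L^1$, and then the weak Harnack inequality to get $\operatorname{ess\,inf}v>0$ before two-sided variations are legitimate. Each of these steps is sound. Two points are slightly imprecise but harmless: existence comes from coercivity plus lower semicontinuity, not from strict convexity, and the remark about $|v|$ is vacuous on the domain $\{v>0\}$.

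What your route buys is a variational characterization of $g$ as the unique minimizer of the strictly convex functional $\mathcal F$, and this ties in with the paper's selection principle. Optimizing over scalings gives $\min_{\lambda>0}\mathcal F(\lambda v)=A\bigl(\frac12+\frac12\log\E(vJv)-\E\log v\bigr)$. This is $A$ times the coefficient of $\eps$ in \eqref{eq:primal-full}, plus the constant $\frac A2(1+\log\frac\pi2)$. So your argument shows directly that the lapses minimizing the first-order coefficient of the upper bound are exactly the multiples of $g$. These are the same lapses at which the Jensen gap \eqref{eq:gap-Jensen} closes.
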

For $\Sigma\cong S^2$ this is proved in \cite{Smith2011}; we include a
proof for an arbitrary closed $\Sigma$.

\begin{proof}
Let $\varphi>0$ be the first eigenfunction of $J$.
Since $\lambda_1(J)>0$, for sufficiently small $\delta>0$,
the functions $\delta\varphi$ and $\delta^{-1}\varphi$
satisfy
\[
 J(\delta\varphi)\le(\delta\varphi)^{-1},
 \qquad
 J(\delta^{-1}\varphi)\ge(\delta^{-1}\varphi)^{-1}.
\]
The standard sub- and supersolution method, followed by
elliptic regularity, gives a smooth positive solution.

For uniqueness, suppose $g_1$ and $g_2$ are two positive
solutions. Subtracting their equations gives
\[
 J(g_1-g_2)=-\frac{g_1-g_2}{g_1g_2}.
\]
Multiplying by $g_1-g_2$ and integrating, we obtain
\[
 0=\int_\Sigma (g_1-g_2)J(g_1-g_2)\dd A
   +\int_\Sigma\frac{(g_1-g_2)^2}{g_1g_2}\dd A.
\]
Both terms are nonnegative by $\lambda_1(J)>0$,
so $g_1=g_2$.
\end{proof}

From now on $g$ is this solution and the coordinates
\eqref{eq:collar-metric} are built from $v=g$. Then $\ell(0,\cdot)=g$,
$gJg\equiv1$ and, by \eqref{eq:collar-expansions},
\begin{equation}\label{eq:g-collar}
 \ell=g\bigl(1+b_1t+O(t^2)\bigr),\qquad w=\tfrac12t^2+a_3t^3+O(t^4);
\end{equation}
after shrinking $d_1$, $ct^2\le w\le Ct^2$ and $w_t\ge ct$.

\subsection{Matching of the two bounds}\label{sec:matching}
Put
\begin{equation}\label{eq:nu-eps}
 d\nu_\eps=\frac{g^{-\eps}\dd A}{\int_\Sigma g^{-\eps}\dd A},
 \qquad
 \log\frac{\int_\Sigma g^{-\eps}\dd A}{A}
 =-\eps\E\log g+O(\eps^2).
\end{equation}
Then $\mathbb E_{\nu_\eps}\phi=\E\phi+O(\eps)$
for every fixed smooth function $\phi$ on $\Sigma$.
We also write
\[
 T=\int_U|X|^{(1+\eps)/\eps}\dd V,
 \qquad
 S=\int_0^d
 \Bigl(\int_\Sigma\ell^{-\eps}e^w\dd A\Bigr)^{-1/\eps}\dd t,
\]
so that $\Clower=T^{-\eps}$ and $\Cupper=S^{-\eps}$.

\emph{The dual bound.} For $v=g$, \eqref{eq:flux-field} gives
$|X|=g^{-\eps}e^{-w}\big/\int_\Sigma g^{-\eps}\dd A$, and $dV=\ell e^w\dd t\dd A$, so
\begin{equation}\label{eq:dual-integral-g}
 T=\Bigl(\int_\Sigma g^{-\eps}\dd A\Bigr)^{-\frac{1+\eps}\eps}
 \int_\Sigma g^{-1-\eps}\int_0^d\ell\,e^{-w/\eps}\dd t\dd A .
\end{equation}
In the inner integral put $t=\sqrt\eps\,y$. By \eqref{eq:g-collar},
\[
 \frac w\eps=\frac{y^2}2+\sqrt\eps\,a_3y^3+O(\eps y^4),
 \qquad
 \frac\ell g=1+\sqrt\eps\,b_1y+O(\eps y^2),
\]
so with
\[
 \int_0^\infty e^{-y^2/2}\dd y=\sqrt{\frac\pi2},
 \qquad
 \int_0^\infty ye^{-y^2/2}\dd y=1,
 \qquad
 \int_0^\infty y^3e^{-y^2/2}\dd y=2
\]
and Lemma~\ref{lem:laplace} for the remainders,
\begin{align}
 \int_0^d\ell\,e^{-w/\eps}\dd t
 &=g\sqrt\eps\Bigl[
 \int_0^\infty e^{-y^2/2}
 \bigl(1+\sqrt\eps\,(b_1y-a_3y^3)\bigr)\dd y
 +O(\eps)\Bigr]\notag\\
 &=g\sqrt\eps\,\Bigl[
 \sqrt{\tfrac\pi2}
 +\sqrt\eps\,(b_1-2a_3)
 +O(\eps)\Bigr]\label{eq:fiber-dual}
\end{align}
uniformly in $x$. In \eqref{eq:dual-integral-g} the factor
$g^{-1-\eps}\cdot g=g^{-\eps}$ turns the outer integral into a
$\nu_\eps$-average, and $1-\frac{1+\eps}\eps=-\frac1\eps$:
\begin{align}
 T&=\Bigl(\int_\Sigma g^{-\eps}\dd A\Bigr)^{-\frac{1+\eps}\eps}\sqrt\eps
 \int_\Sigma g^{-\eps}\Bigl[\sqrt{\tfrac\pi2}+\sqrt\eps\,(b_1-2a_3)+O(\eps)\Bigr]\dd A\notag\\
 &=\Bigl(\int_\Sigma g^{-\eps}\dd A\Bigr)^{-\frac1\eps}\sqrt\eps\,
 \Bigl[\sqrt{\tfrac\pi2}+\sqrt\eps\,\mathbb E_{\nu_\eps}(b_1-2a_3)+O(\eps)\Bigr].\label{eq:T-expansion}
\end{align}

\emph{The primal bound.} Write
\[
 \int_\Sigma\ell^{-\eps}e^w\dd A=\Bigl(\int_\Sigma g^{-\eps}\dd A\Bigr)\,e^{\Phi(t)}
\]
with $\Phi=\log\mathbb E_{\nu_\eps}e^{\psi}$, $\psi=w-\eps\log(\ell/g)$. By
\eqref{eq:g-collar},
\[
 \psi=\tfrac12t^2+a_3t^3-\eps b_1t+O(t^4+\eps t^2),
\]
and its $x$-dependent part is $O(t^3+\eps t)$, so
$\log\mathbb E_{\nu_\eps}e^\psi=\mathbb E_{\nu_\eps}\psi+O(t^6+\eps^2t^2)$ and
\begin{equation}\label{eq:primal-phase}
 \Phi(t)=\tfrac12t^2+\mathbb E_{\nu_\eps}a_3\,t^3-\eps\,\mathbb E_{\nu_\eps}b_1\,t+O(t^4+\eps t^2).
\end{equation}
The same computation in $t=\sqrt\eps\,y$ gives
\begin{equation}\label{eq:S-expansion}
 S=\Bigl(\int_\Sigma g^{-\eps}\dd A\Bigr)^{-\frac1\eps}\int_0^de^{-\Phi/\eps}\dd t
 =\Bigl(\int_\Sigma g^{-\eps}\dd A\Bigr)^{-\frac1\eps}\sqrt\eps\,
 \Bigl[\sqrt{\tfrac\pi2}+\sqrt\eps\,\mathbb E_{\nu_\eps}(b_1-2a_3)+O(\eps)\Bigr].
\end{equation}
The first corrections in \eqref{eq:T-expansion} and \eqref{eq:S-expansion}
coincide because both are $\nu_\eps$-averages of $b_1-2a_3$. Hence
$T/S=1+O(\eps)$ and, since $\Clower\le C_U\le\Cupper$,
\begin{equation}\label{eq:matching-ratio}
 1\le\frac{C_U}{\Clower}\le\frac{\Cupper}{\Clower}=\Bigl(\frac TS\Bigr)^{\eps}=1+O(\eps^2).
\end{equation}
Taking $-\eps\log$ in \eqref{eq:T-expansion} and using \eqref{eq:nu-eps},
\begin{align}
 \log\frac{\Clower}A
 &=\log\frac{\int_\Sigma g^{-\eps}\dd A}A-\frac\eps2\log\frac{\pi\eps}2
 -\eps\log\Bigl[1+\sqrt{\tfrac{2\eps}\pi}\,\mathbb E_{\nu_\eps}(b_1-2a_3)+O(\eps)\Bigr]\notag\\
 &=\frac\eps2|\log\eps|+\eps\Bigl(-\frac12\log\frac\pi2-\E\log g\Bigr)+\kappa_+\eps^{3/2}+O(\eps^2),
 \qquad \kappa_+=\sqrt{\tfrac2\pi}\,\E(2a_3-b_1),\label{eq:one-sided-refined}
\end{align}
and by \eqref{eq:matching-ratio} the same expansion holds for $C_U$ and
$\Cupper$. The coefficients come from the expansions at $t=0$ only.

\emph{The coefficient $\kappa_+$.}
The collar leaves are the geodesic normal graphs
\[
 \Sigma_t=\{\exp_x(tg(x)N(x)):x\in\Sigma\}.
\]
In normal distance coordinates $(r,x)$, these leaves are
given by $t=r/g(x)$. A direct computation gives
\[
 \ell^{-2}=|\nabla t|^2=g^{-2}+O(t^2),
\]
so that $\ell=g+O(t^2)$ and hence $b_1=0$.
Since $w=\frac12t^2+a_3t^3+O(t^4)$, we have
\[
 |\Sigma_t|=\int_\Sigma e^w\dd A
 =A+\frac A2t^2+A\,\E a_3\,t^3+O(t^4).
\]
Thus $\frac{d^3}{dt^3}|_{t=0}|\Sigma_t|=6A\,\E a_3$,
and \eqref{eq:one-sided-refined} yields
\begin{equation}\label{eq:kappa-area}
 \kappa_+=\frac1{3A}\sqrt{\frac2\pi}\,
 \left.\frac{d^3}{dt^3}|\Sigma_t|\right|_{t=0}.
\end{equation}
For an interior hypersurface, reversing the normal direction
($g$ is unchanged) changes the sign of
$\kappa_+$. In particular, reflection symmetry across
$\Sigma$ implies $\kappa_+=0$.

\emph{The comparison field.} For $v=g$, \eqref{eq:b-scalar-general} reads
$|\bb|=(\int_\Sigma g^{-\eps}\dd A)^{-1/\eps}g^{-1}e^{-w/\eps}\big/T$, and by
\eqref{eq:T-expansion}
\begin{align}
 \sqrt\eps\,|\bb|(t,x)
 &=\sqrt{\frac2\pi}\;\frac{e^{-w(t,x)/\eps}}{g(x)}\,
 \Bigl[1+\sqrt{\tfrac{2\eps}\pi}\,\mathbb E_{\nu_\eps}(b_1-2a_3)+O(\eps)\Bigr]^{-1}\notag\\
 &=\sqrt{\frac2\pi}\;\frac{e^{-w(t,x)/\eps}}{g(x)}\,\bigl(1+\kappa_+\sqrt\eps+c_\eps\bigr),\label{eq:b-scalar}
\end{align}
where $c_\eps=O(\eps)$ is a constant, independent of $(t,x)$. With
$\ell/g=1+b_1\sqrt\eps\,y+O(\eps y^2)$ and
$e^{-w/\eps}=e^{-y^2/2}\bigl(1-a_3\sqrt\eps\,y^3+O(\eps(y^4+y^6))\bigr)$ this gives, on fixed $y$-intervals,
\begin{equation}\label{eq:kernel-expansion}
 \sqrt\eps\,\ell\,|\bb|(\sqrt\eps\,y,x)
 =\gamma_+(y)\Bigl[1+\sqrt\eps\bigl(b_1(x)\,y-a_3(x)\,y^3+\kappa_+\bigr)+O\bigl(\eps(1+y^6)\bigr)\Bigr];
\end{equation}
the first correction has fiber mass
$\int_0^\infty(b_1y-a_3y^3+\kappa_+)\gamma_+\dd y=\sqrt{2/\pi}\,(b_1-2a_3)+\kappa_+$,
whose $\nu$-average is zero. The weighted estimate of
Lemma~\ref{lem:laplace} makes \eqref{eq:kernel-expansion} global in $y$: for
every fixed integer $k\ge0$, with the kernel extended by zero for
$\sqrt\eps\,y\ge d$,
\begin{equation}\label{eq:kernel-L1}
 \sup_{x\in\Sigma}\int_0^\infty(1+y^k)\,
 \bigl|\sqrt\eps\,\ell(\sqrt\eps\,y,x)\,|\bb|(\sqrt\eps\,y,x)-\gamma_+(y)\bigr|\dd y
 \le C_k\sqrt\eps .
\end{equation}

\emph{The comparison measure.}
By \eqref{eq:b-identities},
$\Clower^{-1}|\bb|^{1+\eps}\dd V$ is a probability
measure on $U$. Since $|\bb|^\eps=\Clower|X|$,
the expressions for $X$ and $dV$ give
\[
 \Clower^{-1}|\bb|^{1+\eps}\dd V
 =|X|\,|\bb|\,\ell e^w\dd t\dd A
 =\ell|\bb|\dd t\,d\nu_\eps.
\]
After $t=\sqrt\eps\,y$, its density with respect to
$d\nu\dd y$ is
\begin{equation}\label{eq:comparison-density}
 \vartheta_\eps(x,y)
 =\frac{g(x)^{-\eps}}{\E(g^{-\eps})}\,
   \sqrt\eps\,\ell(\sqrt\eps\,y,x)
   |\bb|(\sqrt\eps\,y,x)
 \le Ce^{-cy^2},
\end{equation}
for $0\le y<d/\sqrt\eps$, and is extended by zero
elsewhere. The upper bound follows from
$w\ge ct^2$ and \eqref{eq:b-scalar}.
Since $g^{-\eps}/\E(g^{-\eps})=1+O(\eps)$
uniformly on $\Sigma$, \eqref{eq:kernel-L1} gives,
for every fixed integer $k\ge0$,
\begin{equation}\label{eq:density-distance}
 \int_\Sigma\int_0^\infty
 (1+y^k)\,
 |\vartheta_\eps(x,y)-\gamma_+(y)|
 \dd y\dd\nu(x)
 \le C_k\sqrt\eps .
\end{equation}

\subsection{Selection of the actual minimizer}\label{sec:minimizer}
The comparison field makes the linear term in the convexity estimate
vanish for every competitor: if $u$ is admissible in
\eqref{eq:capacity}, then by \eqref{eq:b-definition},
\eqref{eq:flux-identity} and \eqref{eq:b-identities},
\[
 \int_U|\bb|^{\eps-1}\bb\cdot(-\nabla u-\bb)\dd V
 =\Clower\Bigl(-\int_UX\cdot\nabla u-\int_UX\cdot\bb\Bigr)=\Clower\,(1-1)=0 .
\]
Applying \eqref{eq:Lp-defect} with $a=-\nabla u$ and $b=\bb$ gives
\begin{equation}\label{eq:near-min}
 \|\nabla u+\bb\|_{L^{1+\eps}(U)}^2
 \le\frac{2\bigl(\int_U|\nabla u|^{1+\eps}\dd V-\Clower\bigr)}{\eps(1+\eps)}\,
 \bigl(\|\nabla u\|_{L^{1+\eps}}+\|\bb\|_{L^{1+\eps}}\bigr)^{1-\eps}:
\end{equation}
the gradient of a competitor is close to $-\bb$ as soon as its energy is
close to the lower bound. For the minimizer the energy excess is
$O(\eps^2)$ by \eqref{eq:matching-ratio}, which gives the following.

\begin{proposition}[Gaussian selection of the minimizer]
\label{prop:selection-local}
As $\eps\downarrow0$, the following estimates hold.
\begin{enumerate}
\item[\textup{(i)}] \textup{(Gradient.)}
\begin{equation*}
 \|\nabla v_\eps+\bb\|_{L^{1+\eps}(U)}
 \le C\sqrt\eps.
\end{equation*}

\item[\textup{(ii)}] \textup{(Energy.)}
\begin{equation*}
 \bigl\|(R_\eps)_\#\mu_\eps^U
 -\nu\otimes\gamma_+(y)\dd y\bigr\|_{\mathrm{TV}}
 \le C\sqrt\eps.
\end{equation*}

\item[\textup{(iii)}] \textup{(Profile.)}
With $\widetilde v_\eps(y,x)=v_\eps(\sqrt\eps\,y,x)$
for $\sqrt\eps\,y<d$ and zero otherwise,
\begin{equation}\label{eq:main-profile}
 \sup_{x\in\Sigma}\sup_{y\ge0}
 |\widetilde v_\eps(y,x)-P_+(y)|
 \le C\sqrt\eps.
\end{equation}

\item[\textup{(iv)}] \textup{(Boundary gradient.)}
\begin{equation}\label{eq:main-normal-derivative}
 \sup_{\Sigma}\Bigl|\sqrt\eps\,g\,|\nabla v_\eps|-\sqrt{\tfrac2\pi}\Bigr|
 \le C\sqrt\eps.
\end{equation}
\end{enumerate}
Here $\|\alpha-\beta\|_{\mathrm{TV}}
=\sup_B|\alpha(B)-\beta(B)|$.
\end{proposition}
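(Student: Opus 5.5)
Part (i) follows directly from \eqref{eq:near-min} with $u=v_\eps$. By \eqref{eq:matching-ratio}, $\int_U|\nabla v_\eps|^{1+\eps}-\Clower=C_U-\Clower=O(\eps^2)\Clower$, and $\Clower=A+o(1)$ is bounded. The norms $\|\nabla v_\eps\|_{L^{1+\eps}}=C_U^{1/(1+\eps)}$ and $\|\bb\|_{L^{1+\eps}}=\Clower^{1/(1+\eps)}$ are also bounded. So the right-hand side of \eqref{eq:near-min} is $O(\eps^2)/\eps=O(\eps)$, which gives $\|\nabla v_\eps+\bb\|\le C\sqrt\eps$.

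For (ii), we compare $\mu^U_\eps$ with the comparison measure $\Clower^{-1}|\bb|^{1+\eps}\dd V$. The inequality \eqref{eq:power-L1} with $a=-\nabla v_\eps$, $b=\bb$ bounds $\||\nabla v_\eps|^{1+\eps}-|\bb|^{1+\eps}\|_{L^1}$ by $C\sqrt\eps$. The normalizing constants differ by $C_U/\Clower-1=O(\eps^2)$, so the total variation distance between the two probability measures is $O(\sqrt\eps)$. Pushing forward by $R_\eps$ preserves total variation. The pushforward of the comparison measure has density $\vartheta_\eps$ with respect to $\nu\otimes dy$, and \eqref{eq:density-distance} with $k=0$ shows that it is within $C\sqrt\eps$ of $\nu\otimes\gamma_+$ in $L^1$, which is the total variation distance.

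Parts (iii) and (iv) are pointwise statements, and integral bounds alone do not give them; this is the main obstacle. For (iii) I would combine two inputs. The first is an averaged profile bound: since $v_\eps(t,x)=\int_t^d(-\partial_s v_\eps)\dd s$ and $\partial_s v_\eps$ is close to $-\ell|\bb|$ in the averaged sense of (i), integrating in $s$ and then using the kernel estimate \eqref{eq:kernel-L1} gives $\int_\Sigma\sup_y|\widetilde v_\eps-P_+|\dd\nu\le C\sqrt\eps$. One has to convert $L^{1+\eps}$ control with weight $\ell e^w$ into $L^1$ along fibres, which costs only constants. The second input upgrades this to a uniform bound through one-dimensional barriers, as in Appendix~\ref{app:asymptotics}. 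I would build explicit sub- and supersolutions of $\Delta_{1+\eps}$ of the form $\phi_\pm(t/\sqrt\eps)$ with slightly perturbed weights $\exp(-(1\pm C\sqrt\eps y)y^2/2)$, possibly with an $x$-dependent factor. Using the collar expansion \eqref{eq:g-collar}, one checks that their $(1+\eps)$-Laplacian has the right sign up to errors absorbed by the perturbation, and that they differ from $P_+$ by $O(\sqrt\eps)$ uniformly. The comparison principle with boundary values $1$ on $\Sigma$ and $0$ on $\Sigma_d$, allowing an exponentially small adjustment at $t=d$, then gives \eqref{eq:main-profile}.

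For (iv), the same barriers touch $v_\eps$ at $t=0$, where all three functions equal $1$. Comparing normal derivatives there gives $\partial_t\phi_-(0)\le\partial_t v_\eps\le\partial_t\phi_+(0)$, with $|\partial_t\phi_\pm(0)|=\eps^{-1/2}\sqrt{2/\pi}\,(1+O(\sqrt\eps))$. Since $v_\eps$ is constant on $\Sigma$, we have $|\nabla v_\eps|=|\partial_tv_\eps|/\ell$ with $\ell=g$ there, and this gives \eqref{eq:main-normal-derivative}. I expect the barrier construction to be the delicate step. The leaves are not level sets of the true minimizer, so the tangential Laplacian terms and the $x$-dependence of $w$ and $\ell$ at order $t^3$ have to be dominated. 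I would handle them by letting the barrier depend on $x$ through the correction $a_3(x)y^3$ of \eqref{eq:kernel-expansion}, and check that the residual is $O(\eps)$ relative to the main term.
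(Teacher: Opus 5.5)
Your proposal is correct and is essentially the paper's proof. Parts (i) and (ii) are argued exactly as in the paper, and (iii)--(iv) are obtained there by the same comparison with one-dimensional barriers that equal $v_\eps$ on $\Sigma$ and $\Sigma_d$. The paper's barriers depend on $t$ alone, with phases $\frac12t^2\pm K_0(\frac{t^3}3+\eps t)$. By \eqref{eq:transverse-p-laplacian} no tangential terms arise, and the $x$-dependence of $w_t$ and $\ell$ is absorbed by a uniform drift bound, so neither your averaged profile estimate nor an $x$-dependent barrier is needed. You do need an $\eps$-order term in the phase, such as $\pm K_0\eps t$, which is absent from your weight $e^{-(1\pm C\sqrt\eps y)y^2/2}$, to dominate the drift $\eps\,\partial_t\log\ell$ near $t=0$.
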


\begin{proof}
(i) By \eqref{eq:near-min} with $u=v_\eps$,
\[
 \|\nabla v_\eps+\bb\|_{L^{1+\eps}(U)}^2
 \le \frac{2(C_U-\Clower)}{\eps(1+\eps)}
 \bigl(\|\nabla v_\eps\|_{L^{1+\eps}}
       +\|\bb\|_{L^{1+\eps}}\bigr)^{1-\eps}.
\]
Since $C_U-\Clower=O(\eps^2)$ by
\eqref{eq:matching-ratio}, and both norms are
uniformly bounded, the right-hand side is $O(\eps)$.
This proves (i).

(ii) By \eqref{eq:power-L1} and (i),
\[
 \bigl\||\nabla v_\eps|^{1+\eps}
       -|\bb|^{1+\eps}\bigr\|_{L^1(U)}
 \le C\sqrt\eps.
\]
Since $\mu_\eps^U$ and
$\Clower^{-1}|\bb|^{1+\eps}\dd V$
are probability measures, we obtain
\begin{align*}
 \bigl\|\mu_\eps^U
 -\Clower^{-1}|\bb|^{1+\eps}\dd V
 \bigr\|_{\mathrm{TV}}
 \le
 \frac{1}{2C_U}
 \bigl\||\nabla v_\eps|^{1+\eps}
       -|\bb|^{1+\eps}\bigr\|_{L^1(U)}
 +\frac{|C_U-\Clower|}{2C_U}\le C\sqrt\eps.
\end{align*}
Push-forward does not increase total variation,
and \eqref{eq:density-distance} with $k=0$
shows that the rescaled comparison measure is
$O(\sqrt\eps)$-close to $\nu\otimes\gamma_+(y)\dd y$.
The triangle inequality proves (ii).

(iii) Let $f_\pm$ be as in \eqref{eq:barriers}. By Lemma~\ref{lem:barriers},
$f_+(\sqrt\eps\,y)\le\widetilde v_\eps(y,x)\le f_-(\sqrt\eps\,y)$ for all
$y\ge0$ and $x\in\Sigma$, and \eqref{eq:barrier-asymptotics} gives
\eqref{eq:main-profile}.

(iv) On $\Sigma$, $|\nabla v_\eps|=g^{-1}|\partial_tv_\eps|$, and
$f_+\le v_\eps\le f_-$ with equality at $t=0$ gives
$|f_-'(0)|\le|\partial_tv_\eps(0,x)|\le|f_+'(0)|$; now use
\eqref{eq:barrier-asymptotics}.
\end{proof}

% ===== sec3b_moments.tex =====
\subsection{Logarithmic moments}\label{sec:moments}
The observable $\log|\nabla v_\eps|$ is unbounded where the gradient is small
or large. We first bound the gradient, then transfer logarithmic moments
from the comparison field to the minimizer. Throughout,
$s^{1+\eps}\log^ks$ is taken to be zero at $s=0$.

\begin{lemma}[Gradient bound]\label{lem:gradient}
$\|\nabla v_\eps\|_{L^\infty(U)}\le C/\sqrt\eps$.
\end{lemma}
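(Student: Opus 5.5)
We prove the bound by combining the boundary barriers of Lemma~\ref{lem:barriers} with a rescaled interior gradient estimate. The barriers already control the gradient on $\Sigma$: by Proposition~\ref{prop:selection-local}(iv), $|\nabla v_\eps|\le C/\sqrt\eps$ there. We also need the gradient on $\Sigma_d$. Near $\Sigma_d$ the barriers give $v_\eps\le f_-(t)\le Ce^{-c/\eps}$ for $t\ge d/2$. The one-dimensional supersolution built near $t=d$ then bounds $|\partial_tv_\eps|$ on $\Sigma_d$ by the same exponentially small quantity, up to a factor $C/\eps$. Only a crude bound is needed, so the details of that barrier do not matter. It is therefore enough to bound the gradient in the interior and up to the flat boundary pieces.

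For the interior we rescale. Fix $z_0=(t_0,x_0)\in\overline U$ and consider the ball of radius $r=\sqrt\eps$ about $z_0$ (half-balls at the boundary). In coordinates blown up by $\sqrt\eps$, the metric is $C^2$-close to Euclidean, uniformly in $\eps$. The function $\tilde v(z)=v_\eps(z_0+\sqrt\eps z)$ is $(1+\eps)$-harmonic with $0\le\tilde v\le1$. We then apply Lieberman's $C^{1,\alpha}$ estimate, or the Lipschitz estimate of DiBenedetto/Tolksdorf, for $p$-harmonic functions near $p=1$, with boundary data $1$ or $0$ on flat pieces. This gives $|\nabla\tilde v|\le C\,\mathrm{osc}\,\tilde v\le C$, and hence $|\nabla v_\eps|\le C/\sqrt\eps$.

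The difficulty is that the constants in these regularity estimates degenerate as $p\downarrow1$, since ellipticity is lost. To avoid this, we use the structure instead of uniform regularity. By the weak comparison principle, $x$-translations in the collar are not symmetries, so we argue directly with a maximum principle. Where $\nabla v_\eps\ne0$ the function is smooth. The Bernstein-type quantity $|\nabla v_\eps|^2$, or better the normal component $\partial_tv_\eps/\ell$, satisfies a linearized equation whose operator is elliptic with degenerate but sign-definite coefficients, namely the linearization $\diver(|\nabla v|^{\eps-1}(I+(\eps-1)\mathbf n\otimes\mathbf n)\nabla\cdot)$. The plan is a Bochner computation for $F=|\nabla v_\eps|^2$. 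In the normal direction the lower-order terms are of size $\mathrm{Ric}+|\nabla^2 t|$, which are bounded, and they are multiplied by the weight $|\nabla v|^{\eps-1}$. This should give a differential inequality $LF\ge -CF$. The maximum principle with the weight $e^{Ct}$ then reduces the interior supremum to the boundary values. Those are already $\le C/\eps$ from the first step, which gives $F\le C/\eps$ and hence the claim.

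The hard step is this third one. The Bochner term must be checked to be nonnegative for the anisotropic operator with $p=1+\eps$; this is the standard Uhlenbeck--Tolksdorf identity, and it holds for every $p>1$. The curvature error must be shown to scale as $F$ and not worse. Nondegeneracy is also needed to justify smoothness. To secure it we first run the argument for the regularized problem $\diver((\delta+|\nabla v|^2)^{(\eps-1)/2}\nabla v)=0$ and let $\delta\to0$.
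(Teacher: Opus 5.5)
Your boundary step is fine. It is simpler than you make it: $0\le v_\eps\le f_-$, with both vanishing on $\Sigma_d$, gives $|\partial_tv_\eps(d,x)|\le|f_-'(d)|\le|f_-'(0)|$ directly, so no separate barrier near $t=d$ is needed. You are right to discard the rescaled Lipschitz estimate. The Bochner--Kato inequality $\tfrac12L_\eps(s^2)\ge s^{\eps-1}(\eps|\nabla s|^2-\Lambda s^2)$, with $s=|\nabla v_\eps|$ and $\Ric\ge-\Lambda\mathfrak g$, is exactly what the paper uses.

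The gap is the last step: $L_\eps F\ge-CF$ together with a weight $e^{\pm Ct}$ does not reduce the interior supremum to the boundary. A zeroth-order term of the wrong sign can only be absorbed by a weight that is strongly supersolution-like for $L_\eps$. But $L_\eps$ has ellipticity only $\eps$ in the direction $\mathbf n=\nabla v_\eps/s$, which in the layer is essentially $\partial_t$. Concretely, suppose $G=e^{-\chi}s^2$ has a positive interior maximum; then $\nabla s/s=\nabla\chi/2$ there. Inserting this into $L_\eps G\le0$, the product rule and the Bochner inequality (even keeping the Kato term) gives only
\[
 -2\Lambda\le\operatorname{tr}(A\nabla^2\chi)+\tfrac\eps2|\nabla\chi|^2,\qquad A=I+(\eps-1)\mathbf n\otimes\mathbf n,
\]
so a contradiction requires the right side to be $<-2\Lambda$. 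For $\chi=\pm Ct$ and $\mathbf n=\ell^{-1}\partial_t$, the right side equals $\ell^{-2}\bigl(\pm C(w_t-\eps\,\partial_t\log\ell)+\tfrac\eps2C^2\bigr)=O\bigl(C(t+\eps)+\eps C^2\bigr)$. This is not below $-2\Lambda$ for $t\lesssim\Lambda/C$, whatever $C$ is. For other directions $\mathbf n$ it involves $\mathrm{II}$ and has no sign. Since $\Lambda>0$ in general, your argument does not close.

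The missing idea is to keep the Kato term and take the weight to be a function of $v_\eps$ itself, as the paper does with $Q=s^2/(2-v_\eps)$, i.e.\ $\chi=\log(2-v_\eps)$.

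\begin{itemize}
\item Because $L_\eps v_\eps=0$ and $A\nabla v_\eps=\eps\nabla v_\eps$, one gets $\operatorname{tr}(A\nabla^2\chi)=-\eps s^2/(2-v_\eps)^2$. The right side above then becomes $-\tfrac\eps2s^2/(2-v_\eps)^2$.
\item In the paper's form: the quotient terms cancel, $L_\eps Q=L_\eps(s^2)/(2-v_\eps)$, and at the maximum $|\nabla s|^2=Q^2/4$. The good term $\eps Q^2/4$ is quadratic in $Q$ and beats the linear curvature term, giving $\eps Q^2\le8\Lambda Q$.
\item A weight depending only on $t$ has bounded gradient and cannot make the good term superlinear. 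A function of $v_\eps$ has gradient comparable to $s$, which is what is needed.
\end{itemize}

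The outcome is not a reduction to boundary values but the dichotomy
\[
\|\nabla v_\eps\|_{L^\infty(U)}\le\max\{\sqrt2\max_{\partial U}s,\,4\sqrt{\Lambda/\eps}\}.
\]
The interior bound is absolute, and its $\eps^{-1/2}$ comes from the factor $\eps$ in front of the Kato term. No regularization is needed either: $s>0$ at a positive maximum of $Q$, and $v_\eps$ is smooth there.
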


\begin{proof}
On $\partial U$, $|\nabla v_\eps|=\ell^{-1}|\partial_tv_\eps|$, and
$f_+\le v_\eps\le f_-$ from \eqref{eq:barrier-sandwich}, with equality on
$\Sigma$ and $\Sigma_d$, gives
\[
 |\partial_tv_\eps(0,x)|\le|f_+'(0)|,
 \qquad
 |\partial_tv_\eps(d,x)|\le|f_-'(d)|\le|f_-'(0)|.
\]
By \eqref{eq:barrier-asymptotics}, $\max_{\partial U}|\nabla v_\eps|\le C\eps^{-1/2}$.

Put $s=|\nabla v_\eps|$. On $\{s>0\}$, consider the elliptic
operator
\[
 L_\eps\phi
 =\diver\left[
 s^{\eps-1}\left(
 \nabla\phi+(\eps-1)
 \frac{\langle\nabla v_\eps,\nabla\phi\rangle}{s^2}
 \nabla v_\eps
 \right)\right],
 \qquad L_\eps v_\eps=0.
\]
Fix $\Lambda\ge0$ with $\Ric\ge-\Lambda\mathfrak g$ on $U$.
On $\{s>0\}$, the Bochner formula
(cf.~\cite{KotschwarNi2009}) gives
\[
 \frac12L_\eps(s^2)
 =s^{\eps-1}\Bigl(
 |\nabla^2v_\eps|^2+(\eps-1)|\nabla s|^2
 +\Ric(\nabla v_\eps,\nabla v_\eps)
 \Bigr).
\]
By Kato's inequality $|\nabla^2v_\eps|^2\ge|\nabla s|^2$
and the Ricci lower bound,
\[
 \frac12L_\eps(s^2)
 \ge s^{\eps-1}
 \bigl(\eps|\nabla s|^2-\Lambda s^2\bigr).
\]

Let $Q=s^2/(2-v_\eps)$, and suppose that its maximum over
$\overline U$ is positive and attained at an interior point.
At that point $s>0$, $\nabla Q=0$ and $\nabla^2Q\le0$,
so $L_\eps Q\le0$. Differentiating $s^2=(2-v_\eps)Q$ gives
\[
 \nabla(s^2)=-Q\nabla v_\eps,
 \qquad
 |\nabla s|^2
 =\frac{Q^2}{4s^2}|\nabla v_\eps|^2
 =\frac{Q^2}{4}.
\]
Using $L_\eps v_\eps=0$ and the product rule, we obtain
\[
\begin{aligned}
 0\ge L_\eps Q
 =
 \frac{L_\eps(s^2)}{2-v_\eps}
 -\frac{2\eps Qs^{\eps+1}}{(2-v_\eps)^2}
 +\frac{2\eps s^{\eps+3}}{(2-v_\eps)^3}
\ge
 \frac{2s^{\eps-1}}{2-v_\eps}
 \left(\frac{\eps Q^2}{4}-\Lambda s^2\right).
\end{aligned}
\]
The two quotient terms cancel because
$s^2=(2-v_\eps)Q$. Hence
\[
 \eps Q^2\le4\Lambda s^2
 =4\Lambda(2-v_\eps)Q\le8\Lambda Q.
\]
Since $Q>0$ at the maximum, this implies
$\max_{\overline U}Q\le8\Lambda/\eps$, and therefore
\[
 s^2=(2-v_\eps)Q
 \le2\max_{\overline U}Q
 \le\frac{16\Lambda}{\eps}
 \qquad\text{on }U.
\]
If the maximum of $Q$ is attained on $\partial U$, then
\[
 \max_{\overline U}Q
 \le\max_{\partial U}s^2,
 \qquad
 s^2\le2\max_{\partial U}s^2
 \qquad\text{on }U.
\]
Combining the two cases with the boundary estimate yields
\[
 \|\nabla v_\eps\|_{L^\infty(U)}
 \le
 \max\left\{
 \sqrt2\max_{\partial U}|\nabla v_\eps|,
 4\sqrt{\Lambda/\eps}
 \right\}
 \le\frac C{\sqrt\eps}.
 \qedhere
\]
\end{proof}

\begin{lemma}[Transfer of logarithmic moments]
\label{lem:transfer}
Let $0<\eps\le e^{-1}$ and $k\ge1$ be an integer.
Suppose that $s,b\ge0$ are measurable on $U$ and
\[
 \|s\|_{L^\infty(U)}+\|b\|_{L^\infty(U)}
 \le C_0\eps^{-1}
\]
for a fixed $C_0>0$. Then
\begin{equation}\label{eq:transfer}
\begin{aligned}
 &\left|\int_U
 \left(s^{1+\eps}\log^k(\sqrt\eps\,s)
 -b^{1+\eps}\log^k(\sqrt\eps\,b)\right)\dd V\right|\\
 &\quad\le C_k|\log\eps|^k
 \Bigl[
 \bigl(\|s\|+\|b\|
 +\eps|U|^{1/(1+\eps)}\bigr)^\eps
 \|s-b\|+\eps^{1+\eps}|U|
 \Bigr],
\end{aligned}
\end{equation}
where the norms without subscripts are in
$L^{1+\eps}(U)$ and $C_k$ depends only on
$k$ and $C_0$.
\end{lemma}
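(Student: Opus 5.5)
The plan is to reduce \eqref{eq:transfer} to a pointwise estimate for the scalar function
\[
 F(\sigma)=\sigma^{1+\eps}\log^k(\sqrt\eps\,\sigma),\qquad \sigma\ge0,
\]
and then integrate it with H\"older, following the derivation of \eqref{eq:power-L1}. The difficulty is that the logarithm is unbounded near $\sigma=0$, so we cannot simply bound $|F'|$ by $C(|\log\eps|)^k$ times $\sigma^\eps$. We therefore split $U$ into a small-value region and a bulk region at the threshold $\delta=\eps$.

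\emph{Small-value region.} On $\{\sigma\le\eps\}$ we have $\sqrt\eps\,\sigma\le\eps^{3/2}\le1$. The function $\tau\mapsto\tau^{1+\eps}|\log\tau|^k$ is increasing near $0$ and bounded, so
\[
 |F(\sigma)|\le \eps^{1+\eps}\,|\log(\eps^{3/2})|^k\le C_k\eps^{1+\eps}|\log\eps|^k,
\]
provided $\eps\le e^{-1}$ and $k\ge1$. Some care is needed about where the maximum of $\tau^{1+\eps}|\log\tau|^k$ sits (near $\tau=e^{-k/(1+\eps)}$). If $\eps^{3/2}$ lies beyond that point, one uses instead the global bound $\sigma^{1+\eps}|\log|^k\le\eps^{1+\eps}C_k|\log\eps|^k$, which follows since $|\log(\sqrt\eps\sigma)|\le C|\log\eps|$ whenever $\eps^{2}\lesssim\sqrt\eps\sigma\le1$, and from monotonicity below that. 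Either way, the contribution of the set where both $s,b\le\eps$ is at most $C_k|\log\eps|^k\eps^{1+\eps}|U|$, which is the last term in \eqref{eq:transfer}.

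\emph{Bulk region.} On the set where $\max(s,b)>\eps$, write $F(s)-F(b)=\int_b^sF'$ with
\[
 F'(\sigma)=\sigma^\eps\bigl[(1+\eps)\log^k(\sqrt\eps\sigma)+k\log^{k-1}(\sqrt\eps\sigma)\bigr].
\]
On the segment between $b$ and $s$ we may restrict to $\sigma\ge\eps$: replace the smaller endpoint by $\max(\cdot,\eps)$, and charge the piece below $\eps$ to the small-value estimate, which again gives $O(\eps^{1+\eps}|\log\eps|^k)$ pointwise. For $\eps\le\sigma\le C_0\eps^{-1}$ the sup bound yields $|\log(\sqrt\eps\sigma)|\le C|\log\eps|$, hence
\[
 |F'(\sigma)|\le C_k|\log\eps|^k(s+b+\eps)^\eps .
\]
Thus, pointwise,
\[
 |F(s)-F(b)|\le C_k|\log\eps|^k\bigl[(s+b+\eps)^\eps|s-b|+\eps^{1+\eps}\bigr].
\]

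\emph{Integration.} Integrate over $U$ and apply H\"older with exponents $(1+\eps)/\eps$ and $1+\eps$:
\[
 \int(s+b+\eps)^\eps|s-b|\le\|s+b+\eps\|^\eps\|s-b\|.
\]
Minkowski's inequality gives $\|\eps\|_{L^{1+\eps}}=\eps|U|^{1/(1+\eps)}$, which produces exactly the bracket in \eqref{eq:transfer}.

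The main obstacle is the small-value step: obtaining a uniform-in-$k$ bound of the form $C_k|\log\eps|^k\eps^{1+\eps}$ for $\sup_{\sigma\le\eps}|F|$. This needs an elementary calculus check of the monotonicity of $\tau^{1+\eps}|\log\tau|^k$ on $(0,\eps^{3/2}]$ for $\eps\le e^{-1}$. If that check is awkward, the fallback is to take the threshold at $\delta=\eps^{2}$ and absorb the loss into $C_k$.
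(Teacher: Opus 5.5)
Your proposal is correct and follows essentially the paper's argument: the threshold at $\eps$, the truncations $\bar s=\max\{s,\eps\}$ and $\bar b=\max\{b,\eps\}$, the mean value theorem using $|\log(\sqrt\eps\,\sigma)|\le C|\log\eps|$ on $[\eps,C_0\eps^{-1}]$, and then H\"older and Minkowski. The small-value step you flag as the main obstacle is handled more simply in the paper: write $\sigma=\eps\vartheta$ with $0<\vartheta\le1$, so that $|\log(\sqrt\eps\,\sigma)|\le\frac32|\log\eps|+|\log\vartheta|$; since $\vartheta^{1+\eps}|\log\vartheta|^j$ is bounded on $(0,1]$ and $|\log\eps|\ge1$, this gives $|F(\sigma)|\le C_k\eps^{1+\eps}|\log\eps|^k$ directly, with no need to locate the maximum of $\tau^{1+\eps}|\log\tau|^k$ or to change the threshold.
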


\begin{proof}
We aim to control the difference of logarithmic moments
by the $L^{1+\eps}$ distance between $s$ and $b$.
Let $\Phi(z)=z^{1+\eps}\log^k(\sqrt\eps\,z)$,
with $\Phi(0)=0$.

\emph{Case 1: $0\le z\le\eps$.}
Writing $z=\eps\vartheta$, $0<\vartheta\le1$, gives
\[
 |\Phi(z)|
 \le \eps^{1+\eps}\vartheta^{1+\eps}
 \left(\frac32|\log\eps|+|\log\vartheta|\right)^k
 \le C_k\eps^{1+\eps}|\log\eps|^k,
\]
since $\vartheta^{1+\eps}|\log\vartheta|^k$
is uniformly bounded. Thus the contribution from
values below $\eps$ is uniformly small.

\emph{Case 2: $z\ge\eps$.}
On $\eps\le z\le\max\{\eps,C_0\eps^{-1}\}$,
we have
\[
 |\log(\sqrt\eps\,z)|\le C|\log\eps|.
\]
Differentiating $\Phi$ therefore gives
\[
 |\Phi'(z)|
 \le C_k|\log\eps|^k z^\eps.
\]
Hence the mean value theorem applies with a
uniform bound on the derivative.

\emph{Combining the estimates.}
Set $\bar s=\max\{s,\eps\}$ and
$\bar b=\max\{b,\eps\}$. By Case 1,
\[
 |\Phi(s)-\Phi(\bar s)|
 +|\Phi(b)-\Phi(\bar b)|
 \le C_k\eps^{1+\eps}|\log\eps|^k.
\]
By Case 2 and the mean value theorem,
\[
 |\Phi(\bar s)-\Phi(\bar b)|
 \le C_k|\log\eps|^k
 (s+b+\eps)^\eps|s-b|,
\]
where we used
$|\bar s-\bar b|\le|s-b|$.
Combining these inequalities yields
\[
 |\Phi(s)-\Phi(b)|
 \le C_k|\log\eps|^k
 \left((s+b+\eps)^\eps|s-b|
 +\eps^{1+\eps}\right).
\]

Finally, H\"older's inequality with exponents
$(1+\eps)/\eps$ and $1+\eps$ gives
\begin{align*}
 \int_U(s+b+\eps)^\eps|s-b|\dd V\le
 \|s+b+\eps\|^\eps\|s-b\|\le
 \bigl(\|s\|+\|b\|
 +\eps|U|^{1/(1+\eps)}\bigr)^\eps
 \|s-b\|,
\end{align*}
where the last step follows from Minkowski's
inequality. Integrating the preceding pointwise
estimate proves \eqref{eq:transfer}.
\end{proof}

Let
\[
 Y(x,y)=-\frac12\log\frac\pi2-\log g(x)-\frac{y^2}2 ,
\]
regarded as a random variable under $\nu\otimes\gamma_+(y)\dd y$. Since $x$
and $y$ are independent, $\int_0^\infty y^2\gamma_+\dd y=1$ and
$\int_0^\infty y^4\gamma_+\dd y=3$,
\begin{equation}\label{eq:limit-law}
 \mathbb E[Y]=-\frac12\log\frac\pi2-\E\log g-\frac12,\qquad
 \Var[Y]=\Var_\nu(\log g)+\Var\Bigl(\frac{y^2}2\Bigr)=\Var_\nu(\log g)+\frac12 .
\end{equation}

\begin{proposition}[Logarithmic mean and variance]\label{prop:moments}
For $k=1,2$,
\begin{equation}\label{eq:main-moments}
 \int_U\log^k\bigl(\sqrt\eps\,|\nabla v_\eps|\bigr)\dd\mu_\eps^U=\mathbb E[Y^k]+O\bigl(\sqrt\eps\,|\log\eps|^k\bigr),
\end{equation}
and consequently
\begin{align}
 \int_U\log|\nabla v_\eps|\dd\mu_\eps^U
 &=\frac12|\log\eps|-\frac12\log\frac\pi2-\E\log g-\frac12+O(\sqrt\eps|\log\eps|),
 \label{eq:main-first-moment}\\
 \Var_{\mu_\eps^U}\bigl(\log|\nabla v_\eps|\bigr)
 &=\frac12+\Var_\nu(\log g)+O(\sqrt\eps|\log\eps|^2).\label{eq:main-variance}
\end{align}
\end{proposition}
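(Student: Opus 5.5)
The proof has two steps. First, the moments are moved from the minimizer to the comparison field with Lemma~\ref{lem:transfer}. Second, the comparison-field moments are computed explicitly with the density bounds \eqref{eq:comparison-density}--\eqref{eq:density-distance}.

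For the transfer, apply Lemma~\ref{lem:transfer} with $s=|\nabla v_\eps|$ and $b=|\bb|$.
\begin{itemize}
\item The hypothesis $\|s\|_\infty+\|b\|_\infty\le C_0\eps^{-1}$ holds: Lemma~\ref{lem:gradient} gives $s\le C\eps^{-1/2}$, and \eqref{eq:b-scalar} gives $|\bb|\le C\eps^{-1/2}$.
\item The $L^{1+\eps}$ norms of $s$ and $b$ are bounded uniformly, since their $(1+\eps)$-th powers are $C_U$ and $\Clower$, both close to $A$. Also $|U|$ is fixed.
\item Proposition~\ref{prop:selection-local}(i) gives $\|s-b\|\le\||\nabla v_\eps|-|\bb|\|\le\|\nabla v_\eps+\bb\|\le C\sqrt\eps$.
\end{itemize}
Lemma~\ref{lem:transfer} then bounds the difference of the unnormalized integrals $\int s^{1+\eps}\log^k(\sqrt\eps s)$ and $\int b^{1+\eps}\log^k(\sqrt\eps b)$ by $C_k|\log\eps|^k\sqrt\eps$. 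Dividing by $C_U$ versus $\Clower$ adds a further error. This error is bounded by $|C_U-\Clower|/C_U=O(\eps^2)$ times the comparison moment. That moment is $O(1)$, which the computation below confirms. So it suffices to compute $\Clower^{-1}\int_U|\bb|^{1+\eps}\log^k(\sqrt\eps|\bb|)\,dV$.

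Under $t=\sqrt\eps y$, the comparison measure has density $\vartheta_\eps$ with respect to $d\nu\,dy$. By \eqref{eq:b-scalar},
\[
\log(\sqrt\eps|\bb|)=\tfrac12\log\tfrac2\pi-\log g-\tfrac{w}{\eps}+O(\sqrt\eps).
\]
The $O(\sqrt\eps)$ term is a constant. Since $\tfrac12\log\tfrac2\pi=-\tfrac12\log\tfrac\pi2$, the first three terms give
\[
\log(\sqrt\eps|\bb|)=Y+\Bigl(\tfrac{y^2}2-\tfrac w\eps\Bigr)+O(\sqrt\eps).
\]
By \eqref{eq:g-collar}, $|w/\eps-y^2/2|\le C\sqrt\eps\,y^3$ on $\sqrt\eps y<d$. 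Hence $|\log^k(\sqrt\eps|\bb|)-Y^k|\le C\sqrt\eps(1+y^{2k+2})$ there. Two estimates finish the computation:
\begin{itemize}
\item Integrating this pointwise error against $\vartheta_\eps\le Ce^{-cy^2}$ gives $O(\sqrt\eps)$.
\item Replacing $\vartheta_\eps$ by $\gamma_+$ in $\int Y^k\vartheta_\eps$ costs $O(\sqrt\eps)$ by \eqref{eq:density-distance} with $k$ large enough, since $|Y|^k\le C(1+y^{2k})$.
\end{itemize}
This yields \eqref{eq:main-moments}.

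The consequences are then immediate. For \eqref{eq:main-first-moment}, write $\log|\nabla v_\eps|=\log(\sqrt\eps|\nabla v_\eps|)+\tfrac12|\log\eps|$ and use \eqref{eq:limit-law}. For \eqref{eq:main-variance}, note that the variance is shift-invariant and equals $\mathbb E[Y^2]-\mathbb E[Y]^2$ up to errors $O(\sqrt\eps|\log\eps|^2)$; the value then comes from \eqref{eq:limit-law}.

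The step needing most care is the set where $|\nabla v_\eps|$ is tiny or zero, where the logarithm is singular. Case~1 in Lemma~\ref{lem:transfer} handles it: the weight $s^{1+\eps}$ kills the singularity at cost $\eps^{1+\eps}|\log\eps|^k|U|$, which is negligible. One also has to check that the $\eps^{-1/2}$ gradient bound keeps $|\log(\sqrt\eps s)|\lesssim|\log\eps|$, and it does.
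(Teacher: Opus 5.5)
Your proposal is correct and follows the paper's proof essentially step for step. Like the paper, you compute the comparison-field moments from \eqref{eq:b-scalar}, separating the logarithm error (controlled by $\vartheta_\eps\le Ce^{-cy^2}$) from the density error (controlled by \eqref{eq:density-distance}), and you transfer them to $v_\eps$ via Lemma~\ref{lem:transfer}, Lemma~\ref{lem:gradient}, Proposition~\ref{prop:selection-local}(i) and $\Clower/C_U=1+O(\eps^2)$; the only differences, both harmless, are that you do the transfer first and use the cruder pointwise weight $y^{2k+2}$ in place of the paper's $y^{2k+1}$.
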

\begin{proof}
We first compute the logarithmic moments of the comparison field.
We then transfer these moments to the minimizer using
Lemma~\ref{lem:transfer}.

To identify the comparison moments, we use the explicit profile
\eqref{eq:b-scalar}. Together with
$w(t,x)=t^2/2+O(t^3)$, it gives
\[
\begin{aligned}
 \log\bigl(\sqrt\eps\,|\bb|(\sqrt\eps\,y,x)\bigr)
 &=-\frac12\log\frac\pi2-\log g(x)
   -\frac{w(\sqrt\eps\,y,x)}{\eps}+O(\sqrt\eps)\\
 &=Y(x,y)+O\bigl(\sqrt\eps(1+y^3)\bigr)
\end{aligned}
\]
for $0<y<d/\sqrt\eps$.
Moreover, $w(t,x)\le Ct^2$ implies
\[
 |Y(x,y)|
 +\bigl|\log(\sqrt\eps\,|\bb|(\sqrt\eps\,y,x))\bigr|
 \le C(1+y^2).
\]
Consequently, for $k=1,2$,
\[
 \bigl|\log^k(\sqrt\eps\,|\bb|)-Y^k\bigr|
 \le C_k\sqrt\eps(1+y^{2k+1}),
\]
where, here and in the rescaled integrals below, $\bb$ is
evaluated at $(\sqrt\eps\,y,x)$.

The comparison measure
$\Clower^{-1}|\bb|^{1+\eps}\dd V$ has rescaled density
$\vartheta_\eps$ by \eqref{eq:comparison-density}.
To compare its logarithmic moment with $\mathbb E[Y^k]$,
we separate the error in the logarithm from the error in the density:
\begin{align*}
 &\left|
 \frac1{\Clower}\int_U
 |\bb|^{1+\eps}\log^k(\sqrt\eps\,|\bb|)\dd V
 -\mathbb E[Y^k]\right|\\
 &\quad\le
 \int_\Sigma\int_0^{d/\sqrt\eps}
 \bigl|\log^k(\sqrt\eps\,|\bb|)-Y^k\bigr|
 \vartheta_\eps\dd y\dd\nu
 +
 \int_\Sigma\int_0^\infty
 |Y|^k|\vartheta_\eps-\gamma_+|\dd y\dd\nu.
\end{align*}
The first integral is $O(\sqrt\eps)$ because
$\vartheta_\eps\le Ce^{-cy^2}$ and all Gaussian moments are finite.
The second is $O(\sqrt\eps)$ by
\eqref{eq:density-distance} with weight $1+y^{2k}$,
since $|Y|^k\le C_k(1+y^{2k})$.
Thus
\[
 \frac1{\Clower}\int_U
 |\bb|^{1+\eps}\log^k(\sqrt\eps\,|\bb|)\dd V
 =\mathbb E[Y^k]+O(\sqrt\eps).
\]

We now transfer this estimate to the minimizer.
The bounds required by Lemma~\ref{lem:transfer} follow from
Lemma~\ref{lem:gradient} and \eqref{eq:b-scalar}:
\[
 |\nabla v_\eps|\le C/\sqrt\eps,
 \qquad
 |\bb|\le C\eps^{-1/2}.
\]
Furthermore, Proposition~\ref{prop:selection-local}(i) gives
\[
 \bigl\|\,|\nabla v_\eps|-|\bb|\,\bigr\|_{L^{1+\eps}(U)}
 \le\|\nabla v_\eps+\bb\|_{L^{1+\eps}(U)}
 \le C\sqrt\eps.
\]
Both $L^{1+\eps}$ norms are uniformly bounded, since
their $(1+\eps)$-powers are $C_U$ and $\Clower$,
respectively, and both capacities tend to $A$.
Applying Lemma~\ref{lem:transfer} with
$s=|\nabla v_\eps|$ and $b=|\bb|$, we therefore obtain
\begin{align*}
 &\left|
 \int_U|\nabla v_\eps|^{1+\eps}
 \log^k(\sqrt\eps\,|\nabla v_\eps|)\dd V
 -
 \int_U|\bb|^{1+\eps}
 \log^k(\sqrt\eps\,|\bb|)\dd V
 \right|\\
 &\qquad\le C_k\sqrt\eps\,|\log\eps|^k.
\end{align*}

Dividing by $C_U$ and using the definition
$d\mu_\eps^U=C_U^{-1}|\nabla v_\eps|^{1+\eps}\dd V$,
we substitute the comparison moments computed above to obtain
\begin{align*}
 \int_U
 \log^k(\sqrt\eps\,|\nabla v_\eps|)\dd\mu_\eps^U
 &=
 \frac{\Clower}{C_U}
 \bigl(\mathbb E[Y^k]+O(\sqrt\eps)\bigr)
 +O\bigl(\sqrt\eps\,|\log\eps|^k\bigr)\\
 &=
 \mathbb E[Y^k]
 +O\bigl(\sqrt\eps\,|\log\eps|^k\bigr),
\end{align*}
where the last equality uses
$\Clower/C_U=1+O(\eps^2)$ from
\eqref{eq:matching-ratio}.
This proves \eqref{eq:main-moments}.

The mean and variance formulas follow from
\eqref{eq:main-moments} and \eqref{eq:limit-law},
since adding $\frac12|\log\eps|$ to
$\log(\sqrt\eps\,|\nabla v_\eps|)$ shifts its mean
and leaves its variance unchanged.
\end{proof}

\begin{remark}[Mean-convex comparison]
For comparison, suppose that $\Sigma$ has strictly
positive mean curvature $H$. Choosing the initial
lapse $H^{-1}$ gives $w_t(0,\cdot)=1$, and hence
\[
 w(t,x)=t+O(t^2).
\]
The same primal--dual argument, now with the
rescaling $t=\eps y$ and the flux field
$X=\bigl(\int_\Sigma H^\eps\dd A\bigr)^{-1}H^\eps e^{-w}\,\mathbf n$, gives
\[
 \log\frac{C_{U_H}}A
 =\eps|\log\eps|+\eps\E\log H+O(\eps^2),
\]
where $U_H$ is the corresponding collar.
The rescaled normalized energy measures converge
in total variation to
\[
 \nu\otimes e^{-y}\dd y,\qquad y\ge0.
\]
Thus the boundary layer has normal width $\eps/H$
and an exponential limiting law, in contrast
to the half-Gaussian layer of width $g\sqrt\eps$
at a strictly stable minimal boundary.
\end{remark}

% ===== sec4_global.tex =====
\section{Globalization and proof of Theorem~\ref{thm:global}}\label{sec:global}

Throughout this section the hypotheses of Theorem~\ref{thm:global} hold,
the collar coordinates are built from $g$, $U=(0,d)\times\Sigma$ and
$\Sigma_d=\{d\}\times\Sigma$ are as in Section~\ref{sec:coordinates}, $v_\eps$ is the
collar minimizer and $\bar v_\eps$ its extension by zero. We write
\[
 u_\eps=u_{1+\eps},\qquad C_M=C_M(1+\eps),\qquad d\mu_\eps=C_M^{-1}|\nabla u_\eps|^{1+\eps}\dd V ;
\]
the potential $u_\eps$ exists, $0<u_\eps<1$, and
$C_M=\int_M|\nabla u_\eps|^{1+\eps}\dd V$
(\cite{FogagnoloMazzieri2022}, after smoothly filling in $\Sigma$), so $\mu_\eps$ is a
probability measure, and $\bar v_\eps$ is a competitor for $C_M$. 

\subsection{The logarithmic limit}
The functions $z_\eps=-\eps\log u_\eps$ converge to the arrival time of the
weak inverse mean curvature flow from $\Sigma$. Since $\Sigma$ is strictly
outward minimizing, this arrival time is positive away from $\Sigma$, and
hence $u_\eps$ is exponentially small there.

\begin{lemma}[The logarithmic limit]\label{lem:log-control}
Let $z_\eps=-\eps\log u_\eps$.
\begin{enumerate}
\item[\textup{(a)}] $\|\nabla z_\eps\|_{L^\infty(M)}\le C$, and for a fixed
large $R$ in the asymptotically flat chart
\begin{equation}\label{eq:AF-barrier}
 u_\eps\le(R/r)^{(n-2)/\eps}\qquad(r\ge R).
\end{equation}
\item[\textup{(b)}] As $\eps\downarrow0$, $z_\eps\to z$ locally uniformly,
where $z$ is the proper weak solution of the inverse mean curvature flow
with initial region bounded by $\Sigma$.
\item[\textup{(c)}] $z>0$ on $M\setminus\Sigma$. Consequently, for every
compact $S'\subset M\setminus\Sigma$,
\begin{equation}\label{eq:trace-log-limit}
 \lim_{\eps\downarrow0}\ -\eps\log\max_{S'}u_\eps=\min_{S'}z>0 .
\end{equation}
\end{enumerate}
\end{lemma}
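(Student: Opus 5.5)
The plan follows the Moser / Kotschwar--Ni scheme. The logarithmic transform $z_\eps=-\eps\log u_\eps$ satisfies
\[
 \diver\bigl(|\nabla z_\eps|^{\eps-1}\nabla z_\eps\bigr)=|\nabla z_\eps|^{1+\eps},
\]
which is the $\eps$-regularized inverse mean curvature flow equation. The three parts are handled in order: gradient bound, compactness and identification of the limit, then positivity of the limit.

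\textbf{Part (a).} For the gradient bound I would run the same maximum-principle argument as in Lemma~\ref{lem:gradient}, now applied to $|\nabla z_\eps|^2$ with the linearized operator of the $z$-equation.
\begin{itemize}
\item Bochner's formula with $\Ric\ge-\Lambda$ gives a bound depending only on the geometry. On a complete manifold with bounded geometry this is done on balls of fixed radius, as in \cite{KotschwarNi2009,MariRigoliSetti2022}. The asymptotically flat metric has bounded curvature, so the local estimate is uniform.
\item On $\Sigma$, where $z_\eps=0$, the gradient is controlled by a barrier. The bound $|\nabla u_\eps|\le C\eps^{-1/2}$ near $\Sigma$ (from comparing $u_\eps$ with $v_\eps$ and the collar barriers $f_\pm$) gives $|\nabla z_\eps|=\eps|\nabla u_\eps|/u_\eps\le C\sqrt\eps$ there.
\end{itemize}
The decay estimate \eqref{eq:AF-barrier} follows by comparison on $\{r\ge R\}$. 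The function $(R/r)^{(n-2)/\eps}$ is a $p$-supersolution there: in the Euclidean case $r^{-(n-p)/(p-1)}$ is $p$-harmonic, and the exponent $(n-2)/\eps$ exceeds $(n-p)/(p-1)=(n-1-\eps)/\eps$ by a margin large enough to absorb the $O(r^{-\tau-1})$ metric errors. The function is $\ge1\ge u_\eps$ on $r=R$ and tends to $0$ at infinity, so comparison applies.

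\textbf{Part (b).} By (a) the family $z_\eps$ is locally equi-Lipschitz with $z_\eps=0$ on $\Sigma$, so Arzel\`a--Ascoli gives locally uniform subsequential limits $z$. Moser's argument, extended by \cite{KotschwarNi2009,MariRigoliSetti2022}, shows that every such limit is a weak IMCF solution in the sense of Huisken--Ilmanen with initial condition $\Sigma$. The inequality $z\ge(n-2)\log(r/R)$, which comes from \eqref{eq:AF-barrier}, makes $z$ proper. Huisken--Ilmanen uniqueness of proper weak solutions then gives convergence of the whole family.

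\textbf{Part (c).} The key point is that $\Sigma$ is strictly outward minimizing. Outermost minimal boundaries in asymptotically flat manifolds are outward minimizing, and the minimizing hull of $\Sigma$ is itself. Strict stability then upgrades this to strict: near $\Sigma$, the foliation $\Sigma_t$ has mean curvature $H_t\approx tgJg=t/g>0$. A calibration or maximum-principle argument with this mean-convex foliation shows that any competitor enclosing $\Sigma$ and touching the collar has strictly larger area. Outside the collar, outermost-ness applies: a minimizing hull boundary there would be a minimal surface, which is excluded.

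Given this, the Huisken--Ilmanen jump property says $\{z=0\}$ equals the strictly minimizing hull, which is $\Sigma$ itself, so $z>0$ off $\Sigma$. For \eqref{eq:trace-log-limit}, note $-\eps\log\max_{S'}u_\eps=\min_{S'}z_\eps$, which converges uniformly on the compact set $S'$ by (b).

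I expect the main obstacle to be the boundary gradient control on $\Sigma$ in (a). The capacity potential is degenerate there, so the local collar barriers must be transferred to the global $u_\eps$. This is likely handled via comparison $u_\eps\ge v_\eps$ on $U$, plus an upper barrier using the mean-convex foliation.
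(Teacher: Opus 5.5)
Your outline has the same structure as the paper's proof: a uniform gradient bound, Moser-type compactness, properness from the decay barrier, Huisken--Ilmanen uniqueness, and $z>0$ from the minimizing-hull property. However, the justification you give for \eqref{eq:AF-barrier} is backwards. With $\alpha=(n-2)/\eps$,
\[
 \frac{n-p}{p-1}-\alpha=\frac{n-1-\eps}{\eps}-\frac{n-2}{\eps}=\frac{1-\eps}{\eps}>0,
\]
so $\alpha$ lies \emph{below} the critical exponent, not above it, and that is exactly what is needed. In $\R^n$,
\[
 \Delta_p r^{-\alpha}=-\alpha^{p-1}\bigl(n-1-(\alpha+1)(p-1)\bigr)\,r^{-(\alpha+1)(p-1)-1}.
\]
Hence $r^{-\alpha}$ is $p$-superharmonic only for $\alpha\le(n-p)/(p-1)$. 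Powers decaying faster than the fundamental solution are strict subsolutions, so with your stated inequality the comparison on $\{r\ge R\}$ would fail. For $\alpha=(n-2)/\eps$ the bracket equals $1-\eps$. Moreover, the coefficient $\eps(\alpha+1)=n-2+\eps$ of the second-derivative term stays bounded. So the asymptotically flat corrections to $|\nabla r|$, $\Delta r$ and $\nabla^2r(\nabla r,\nabla r)$ enter with relative size $O(r^{-\tau})$ uniformly in $\eps$, which is why one $R$ works for all small $\eps$. With this correction the step goes through.

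Two smaller points. In (a), only the lower barrier is needed on $\Sigma$. Since $u_\eps\ge v_\eps\ge f_+$ on $U$ with equality on $\Sigma$, you get $|\nabla u_\eps|\le|f_+'(0)|/g\le C\eps^{-1/2}$ and hence $|\nabla z_\eps|\le C\sqrt\eps$ there. No upper barrier and no smallness of $u_\eps$ on $\Sigma_d$ is involved, and invoking the latter would be circular with (c). Your maximum-principle route is a legitimate substitute for the paper's citation of Mari--Rigoli--Setti after filling in $\Sigma$. It combines interior estimates on balls of fixed radius, using the bounded sectional curvature of the asymptotically flat metric, with the Bochner argument on a compact neighbourhood of $\Sigma$; at an interior maximum the $\eps$-dependence drops out and gives $|\nabla z_\eps|^2\le(n-1)\Lambda$.

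In (c), strict stability and the calibration by the leaves $\Sigma_t$ are not needed, and they only handle competitors $F$ with $F\setminus E$ inside the collar. What covers all competitors is the argument you compress into one sentence. An outer-minimizing $F\supsetneq E$ of equal area has $C^{1,1}$ boundary that is minimal away from $\Sigma$ (Huisken--Ilmanen, Theorem 1.3), hence smooth and minimal. By the strong maximum principle, each component either coincides with $\Sigma$ or is a closed minimal hypersurface excluded by outermostness. This is what the paper cites. (Also, $H_t\approx tJg=t/g$, whereas $tgJg=t$ equals $\ell H_t$.)
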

\begin{proof}
After smoothly filling in $\Sigma$, the gradient estimate follows
from \cite[Theorem~2.24]{MariRigoliSetti2022}.
For a fixed sufficiently large $R$ and all small $\eps$,
$(R/r)^{(n-2)/\eps}$ is $(1+\eps)$-superharmonic on $\{r\ge R\}$.
Comparison gives \eqref{eq:AF-barrier}.

The $p$-Laplace approximation of weak inverse mean curvature flow
\cite[Section~4]{MariRigoliSetti2022} (see also \cite{Moser2007})
gives locally uniform subsequential convergence of $z_\eps$
to a weak solution starting from $\Sigma$.
The limit is proper by \eqref{eq:AF-barrier} and unique by
\cite[Theorem~2.2(iii)]{HuiskenIlmanen2001}, so the whole family
converges.

Under our hypotheses, $\Sigma$ is strictly outward minimizing
as a consequence of outermostness and
\cite[Theorem~1.3(iii) and equation~(1.15)]{HuiskenIlmanen2001}.
The minimizing hull property
\cite[Property~1.4(iii)]{HuiskenIlmanen2001} and the level-set
identities in \cite[p.~369]{HuiskenIlmanen2001} then give
$z>0$ on $M\setminus\Sigma$.
Finally, locally uniform convergence yields
\[
 -\eps\log\max_{S'}u_\eps
 =\min_{S'}z_\eps
 \longrightarrow\min_{S'}z>0,
\]
which proves \eqref{eq:trace-log-limit}.
\end{proof}

With $c=\frac12\min_{\Sigma_d}z>0$,
\eqref{eq:trace-log-limit} gives, for all small $\eps$,
\begin{equation}\label{eq:trace-decay}
 a_\eps:=\max_{\Sigma_d}u_\eps\le e^{-c/\eps}.
\end{equation}
By the maximum principle, $u_\eps\le a_\eps$
on $M\setminus U$.

On $U$, the comparison principle gives
\[
 v_\eps\le u_\eps
 \le a_\eps+(1-a_\eps)v_\eps,
\]
since all three functions are $(1+\eps)$-harmonic
and their boundary values are ordered.
Together with the bound outside $U$, this yields
\begin{equation}\label{eq:global-profile}
 0\le u_\eps-\bar v_\eps
 \le a_\eps\le e^{-c/\eps}
 \qquad\text{on }M.
\end{equation}

\subsection{Exponential localization}\label{sec:localization}
Since the potential is exponentially small outside the collar, the global
problem differs from the collar problem only by exponentially small terms:
in the capacity, in the gradient, in the energy measure and in the profile.

\begin{proposition}[Exponential localization]
\label{prop:localization}
For all sufficiently small $\eps$,
\begin{equation}\label{eq:exact-collar-global}
 C_M\le C_U\le(1-a_\eps)^{-\eps}C_M,
\end{equation}
and
\begin{equation}\label{eq:global-gradient}
 \|\nabla u_\eps-\nabla\bar v_\eps\|_{L^{1+\eps}(M)}
 \le C\sqrt{a_\eps}
 \le Ce^{-c/(2\eps)}.
\end{equation}
\end{proposition}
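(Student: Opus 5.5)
The plan is to compare competitors in both directions and then feed the capacity bound into the convexity estimate \eqref{eq:Lp-defect}.

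\emph{Capacity bounds.} The extension $\bar v_\eps$ is admissible for $C_M$; strictly, it is a $W^{1,1+\eps}$ function with bounded support and boundary value one, and it can be approximated by Lipschitz functions. Its energy is $C_U$, so $C_M\le C_U$.

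For the reverse inequality, consider $w=(u_\eps-a_\eps)_+/(1-a_\eps)$ restricted to $U$. On $\Sigma$ it equals $1$. On $\Sigma_d$ we have $u_\eps\le a_\eps$, so $w=0$ there. Hence $w$ is admissible for $C_U$, and
\[
 C_U\le(1-a_\eps)^{-1-\eps}\int_U|\nabla u_\eps|^{1+\eps}\dd V\le(1-a_\eps)^{-1-\eps}C_M .
\]
This has exponent $1+\eps$ rather than $\eps$, and the statement asks for the sharper factor $(1-a_\eps)^{-\eps}$. To obtain it, I would use the flux (co-area) structure of the $p$-harmonic potential $u_\eps$. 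For every $s\in(0,1)$ the flux $\int_{\{u=s\}}|\nabla u_\eps|^{\eps}\dd\mathcal H^{n-1}$ equals $C_M$, so the energy in $\{a_\eps<u_\eps<1\}$ is $(1-a_\eps)C_M$. Since $U\cap\{u_\eps>a_\eps\}$ lies inside that region, the estimate becomes
\[
 C_U\le(1-a_\eps)^{-1-\eps}(1-a_\eps)C_M=(1-a_\eps)^{-\eps}C_M .
\]
Establishing the flux identity requires integrating by parts against truncations of $u_\eps$, using the weak equation and the decay at infinity from \eqref{eq:AF-barrier}. I expect this to be routine.

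\emph{Gradient estimate.} I would apply \eqref{eq:Lp-defect} on $M$ with $a=-\nabla\bar v_\eps$ and $b=-\nabla u_\eps$. The linear term $\int|\nabla u_\eps|^{\eps-1}\nabla u_\eps\cdot\nabla(\bar v_\eps-u_\eps)\dd V$ vanishes, because $u_\eps$ is $(1+\eps)$-harmonic and $\bar v_\eps-u_\eps$ vanishes on $\Sigma$. The boundary term at infinity vanishes by \eqref{eq:AF-barrier}: $u_\eps$ decays and $\bar v_\eps$ has compact support. A cutoff argument near infinity handles the test function $u_\eps$, which is not compactly supported. This gives
\[
 \|\nabla u_\eps-\nabla\bar v_\eps\|^2\le\frac{2}{\eps(1+\eps)}(C_U-C_M)\bigl(\|\nabla u_\eps\|+\|\nabla\bar v_\eps\|\bigr)^{1-\eps}.
\]
By the first step, $C_U-C_M\le\bigl((1-a_\eps)^{-\eps}-1\bigr)C_M\le C\eps a_\eps$. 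The factor $\eps$ cancels against the $1/\eps$, and the norms are bounded because $C_M,C_U\to A$. The result is $\|\nabla u_\eps-\nabla\bar v_\eps\|\le C\sqrt{a_\eps}$, and \eqref{eq:trace-decay} gives the bound $Ce^{-c/(2\eps)}$.

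\emph{Main obstacle.} The delicate point is the exact exponent $\eps$ in \eqref{eq:exact-collar-global}, since the factor $\eps$ it produces in $C_U-C_M$ is what cancels the $1/\eps$ from convexity. This relies on the level-set flux identity for $u_\eps$, which needs justification. Using the weak form with the test function $\min(\max(u_\eps,a_\eps),1)-a_\eps$ should suffice for that.
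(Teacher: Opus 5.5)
Your proposal is correct and follows the paper's proof essentially step for step. It uses the same two competitors $\bar v_\eps$ and $(u_\eps-a_\eps)_+/(1-a_\eps)$, the same level-set energy identity $\int_{\{u_\eps>t\}}|\nabla u_\eps|^{1+\eps}\dd V=(1-t)C_M$ to sharpen the exponent to $\eps$, and the same application of \eqref{eq:Lp-defect} with the linear term removed by testing against $\bar v_\eps-u_\eps$. One small correction: your proposed test function $\min(\max(u_\eps,a_\eps),1)-a_\eps=(u_\eps-a_\eps)_+$ equals $1-a_\eps\neq0$ on $\Sigma$, so you must either keep the boundary flux term or, as the paper does, subtract a multiple of $u_\eps$ and test with $\min(u_\eps,t)-t\,u_\eps$, which vanishes on $\Sigma$.
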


\begin{proof}
For $0<t<1$, testing the weak equation of $u_\eps$
with $\min(u_\eps,t)-t\,u_\eps$, justified by cutoff
approximation using \eqref{eq:AF-barrier}, gives
\[
 \int_{\{u_\eps<t\}}|\nabla u_\eps|^{1+\eps}\dd V
 =t\,C_M.
\]
Since $\nabla u_\eps=0$ almost everywhere on each level set,
\begin{equation}\label{eq:energy-slicing}
 \int_{\{u_\eps>t\}}|\nabla u_\eps|^{1+\eps}\dd V
 =(1-t)\,C_M.
\end{equation}

The extension $\bar v_\eps$ is a competitor for $C_M$,
so $C_M\le C_U$. Conversely,
$(u_\eps-a_\eps)_+/(1-a_\eps)$ is a competitor for $C_U$.
By \eqref{eq:energy-slicing},
\[
 C_U
 \le (1-a_\eps)^{-1-\eps}
 \int_{\{u_\eps>a_\eps\}}
 |\nabla u_\eps|^{1+\eps}\dd V
 =(1-a_\eps)^{-\eps}C_M,
\]
which proves \eqref{eq:exact-collar-global}. In particular,
\begin{equation}\label{eq:global-comparison}
 0\le\log\frac{C_U}{C_M}
 \le-\eps\log(1-a_\eps)
 \le2\eps a_\eps
 \le2\eps e^{-c/\eps},
\end{equation}
and
\[
 0\le C_U-C_M\le2\eps a_\eps C_U.
\]

Similarly, testing the weak equation with
$\bar v_\eps-u_\eps$ by cutoff approximation gives
\[
 \int_M|\nabla u_\eps|^{\eps-1}\nabla u_\eps
 \cdot(\nabla\bar v_\eps-\nabla u_\eps)\dd V=0.
\]
Applying \eqref{eq:Lp-defect}, with all norms below
taken in $L^{1+\eps}(M)$, we obtain
\[
 \|\nabla u_\eps-\nabla\bar v_\eps\|^2
 \le
 \frac{2(C_U-C_M)}{\eps(1+\eps)}
 \bigl(\|\nabla u_\eps\|
       +\|\nabla\bar v_\eps\|\bigr)^{1-\eps}
 \le Ca_\eps.
\]
Here both gradient norms are uniformly bounded because
$C_M\le C_U$ and $C_U\to A$.
This proves \eqref{eq:global-gradient}.
\end{proof}

\subsection{Proof of Theorem~\ref{thm:global}}\label{sec:global-moments}
(i) By \eqref{eq:global-comparison}, \eqref{eq:one-sided-refined} and
\eqref{eq:kappa-area},
\[
 \log\frac{C_M}A=\log\frac{C_U}A+O(\eps e^{-c/\eps})
 =\frac\eps2|\log\eps|+\eps\Bigl(-\frac12\log\frac\pi2-\E\log g\Bigr)+\kappa_+\eps^{3/2}+O(\eps^2).
\]

(ii) Take $U=(0,d)\times\Sigma$ with the $d$ of the statement,
and extend $\mu_\eps^U$ by zero to $M$.
By \eqref{eq:power-L1} and \eqref{eq:global-gradient},
\[
 \bigl\|
 |\nabla u_\eps|^{1+\eps}
 -|\nabla\bar v_\eps|^{1+\eps}
 \bigr\|_{L^1(M)}
 \le Ce^{-c/(2\eps)}.
\]
Since $C_M,C_U\to A$ and
$C_U-C_M\le 2\eps a_\eps C_U$, we obtain
\begin{equation}\label{eq:global-measure}
 \|\mu_\eps-\mu_\eps^U\|_{\mathrm{TV}}
 \le Ce^{-c/(2\eps)}.
\end{equation}

Restriction and push-forward do not increase total variation.
Thus \eqref{eq:global-measure} and
Proposition~\ref{prop:selection-local}(ii) give
\begin{align*}
 &\bigl\|
 (R_\eps)_\#(\mu_\eps\llcorner U)
 -\nu\otimes\gamma_+(y)\dd y
 \bigr\|_{\mathrm{TV}}
 \le
 \|\mu_\eps-\mu_\eps^U\|_{\mathrm{TV}}
 +\bigl\|
 (R_\eps)_\#\mu_\eps^U
 -\nu\otimes\gamma_+(y)\dd y
 \bigr\|_{\mathrm{TV}}\le C\sqrt\eps,
\end{align*}
which proves \eqref{eq:global-TV-main}.

Finally, \eqref{eq:global-profile} and \eqref{eq:main-profile} give
\eqref{eq:global-profile-intro}, since $|u_\eps-\bar v_\eps|\le e^{-c/\eps}$.
Moreover, $v_\eps\le u_\eps\le a_\eps+(1-a_\eps)v_\eps$ on $U$ with equality
on $\Sigma$ gives
$(1-a_\eps)|\partial_tv_\eps|\le|\partial_tu_\eps|\le|\partial_tv_\eps|$ on
$\Sigma$, so \eqref{eq:main-normal-derivative} gives
\eqref{eq:global-normal-intro}.

(iii) We transfer the logarithmic moments from the collar
to $M$. Take $R$ in \eqref{eq:AF-barrier} so large that
$\{r\ge R\}\cap\overline U=\emptyset$; comparison with the
$(1+\eps)$-superharmonic function $a_\eps(R/r)^{(n-2)/\eps}$ on $\{r\ge R\}$
then gives $u_\eps\le a_\eps(R/r)^{(n-2)/\eps}$ there. By
Lemma~\ref{lem:log-control}(a), the exponential bound outside $U$, and this
bound,
\begin{equation}\label{eq:tail-envelope}
 |\nabla u_\eps|
 \le \frac{Ce^{-c/\eps}}{\eps}
 \min\Bigl\{1,\Bigl(\frac Rr\Bigr)^{(n-2)/\eps}\Bigr\}
 \qquad\text{on }M\setminus U,
\end{equation}
where the second factor is used on the AF end.

For $k=0,1,2$, the function
$s^{1+\eps}|\log(\sqrt\eps s)|^k$ is increasing
in the range of this envelope. Applying the envelope
and integrating in the AF coordinates, whose volume
element is bounded by $Cr^{n-1}\dd r\dd\omega$, gives
\begin{equation}\label{eq:global-tail}
 \int_{M\setminus U}
 |\nabla u_\eps|^{1+\eps}
 |\log(\sqrt\eps|\nabla u_\eps|)|^k\dd V
 \le Ce^{-c/(2\eps)}.
\end{equation}

On $U$, Lemmas~\ref{lem:log-control}(a) and
\ref{lem:gradient} give the required
$O(\eps^{-1})$ gradient bounds. Thus
Lemma~\ref{lem:transfer} and
\eqref{eq:global-gradient}, together with
\eqref{eq:global-tail} and
$C_M/C_U=1+O(\eps e^{-c/\eps})$, yield
\[
 \int_M
 \log^k(\sqrt\eps|\nabla u_\eps|)\dd\mu_\eps
 =
 \int_U
 \log^k(\sqrt\eps|\nabla v_\eps|)
 \dd\mu_\eps^U
 +O(\eps^{1+\eps}|\log\eps|^k)
\]
for $k=1,2$.

By \eqref{eq:main-moments} and \eqref{eq:limit-law},
\begin{equation}\label{eq:global-variance}
 \Var_{\mu_\eps}(\log|\nabla u_\eps|)
 =\frac12+\Var_\nu(\log g)
 +O(\sqrt\eps|\log\eps|^2).
\end{equation}

We finally establish the exponent differentiation formula
stated in Remark~\ref{rem:thmA}.

\begin{lemma}[Differentiation in the exponent]
\label{lem:frozen}
For $p>1$ sufficiently close to one,
\begin{equation}\label{eq:capacity-derivative}
 \frac{d}{dp}\log C_M(p)
 =\int_M\log|\nabla u_p|\dd\mu_p,
 \qquad
 d\mu_p=C_M(p)^{-1}|\nabla u_p|^p\dd V.
\end{equation}
\end{lemma}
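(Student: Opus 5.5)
The plan is a frozen-potential (envelope theorem) argument. For fixed $p$ near one and nearby $q$, the potential $u_p$ is admissible for $C_M(q)$ after truncation, because $u_p\le C r^{-(n-p)/(p-1)}$ decays faster than any power by \eqref{eq:AF-barrier} and $|\nabla u_p|$ is bounded by Lemma~\ref{lem:log-control}(a). This gives the upper bound
\[
 C_M(q)\le F_p(q):=\int_M|\nabla u_p|^q\dd V,
 \qquad F_p(p)=C_M(p).
\]
The function $F_p$ is finite and smooth for $q$ in a neighbourhood of $p$: the integrand $|\nabla u_p|^q|\log|\nabla u_p||^k$ is dominated near zero gradient by $|\nabla u_p|^{p-\delta}$ times logarithms, and that majorant is integrable by the tail envelope \eqref{eq:tail-envelope}. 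Differentiating under the integral sign gives
\[
 F_p'(p)=\int_M|\nabla u_p|^p\log|\nabla u_p|\dd V .
\]
Dividing by $C_M(p)$, this is exactly the right side of \eqref{eq:capacity-derivative}.

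For the reverse inequality I would use the dual construction of Section~\ref{sec:prelim} globally. Put $X_p=C_M(p)^{-1}|\nabla u_p|^{p-2}(-\nabla u_p)$. This field is divergence free with unit flux, so for every competitor $u$ for $C_M(q)$ we have $-\int X_p\cdot\nabla u=1$. Hölder's inequality then gives
\[
 C_M(q)\ge G_p(q):=\Bigl(\int_M|X_p|^{q/(q-1)}\dd V\Bigr)^{-(q-1)} .
\]
At $q=p$, one checks that $|X_p|^{p/(p-1)}=C_M(p)^{-p/(p-1)}|\nabla u_p|^p$, so $G_p(p)=C_M(p)$. The same domination argument shows $G_p$ is differentiable at $p$. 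A direct computation of $\frac{d}{dq}\log G_p$ at $q=p$ should again give $\int\log|\nabla u_p|\dd\mu_p$. I would verify this by writing
\[
 \log G_p(q)=-(q-1)\log\int e^{\frac q{q-1}\log|X_p|}\dd V .
\]
Differentiating, the term $\log\int|X_p|^{p'}$ combines with the mean of $\frac{-1}{(p-1)}\log|X_p|$ under $\mu_p$. Substituting $\log|X_p|=(p-1)\log|\nabla u_p|-\log C_M(p)$ yields cancellations leaving $\int\log|\nabla u_p|\,d\mu_p$.

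Squeezing gives $G_p\le C_M\le F_p$ near $p$, with equality at $q=p$ and equal derivatives there. Hence $\log C_M$ is differentiable at $p$ with the stated derivative. This holds at every $p$ close to one, which is all the lemma claims.

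The main obstacle is the integrability needed to differentiate $F_p$ and $G_p$ under the integral sign. Two regions need control:
\begin{itemize}
 \item[(a)] On the asymptotically flat end, $|\nabla u_p|$ decays like $r^{-(n-1)/(p-1)}$ (using the barrier together with gradient decay from \cite{MariRigoliSetti2022}). For $q$ slightly below $p$ the power $r^{-q(n-1)/(p-1)}r^{n-1}$ stays integrable once $p$ is close to one; for the dual exponent $q/(q-1)$ near $p/(p-1)$ the decay is even better.
 \item[(b)] Near critical points of $u_p$, where $\log|\nabla u_p|$ is unbounded below, the factor $s^{q}|\log s|$ is bounded for $q>0$, so this region causes no trouble.
\end{itemize}
I expect step (a), obtaining a pointwise gradient envelope on the end uniform for $q$ in a neighbourhood, to need the most care. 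I would first try deriving it from \eqref{eq:AF-barrier} combined with a local Cheng–Yau type estimate for $\log u_p$.
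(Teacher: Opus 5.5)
Your proposal is correct and follows essentially the paper's route. You freeze $u_p$ as a competitor for $C_M(q)$ and $X_p=-C_M(p)^{-1}|\nabla u_p|^{p-2}\nabla u_p$ as a unit-flux dual field, which gives exactly the paper's two-sided bound $C_M(p)\bigl(\int|\nabla u_p|^{\frac{p-q}{q-1}}\dd\mu_p\bigr)^{-(q-1)}\le C_M(q)\le C_M(p)\int|\nabla u_p|^{q-p}\dd\mu_p$; the integrability comes from Lemma~\ref{lem:log-control}(a) and \eqref{eq:tail-envelope}, which is the Cheng--Yau-type bound you anticipate, and you then squeeze. The differences are minor: the paper expands both sides to $O(\eta^2)$ using second logarithmic moments, while your first-order squeeze needs only differentiability at $q=p$; and \eqref{eq:AF-barrier} and \eqref{eq:tail-envelope} give decay $r^{-(n-2)/(p-1)}$ for $u_p$ and $|\nabla u_p|$, not the sharper Euclidean rates you quote, though this still suffices for $p$ close to one.
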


\begin{proof}
Fix $p=1+\eps$ with $\eps>0$ sufficiently small.
Lemma~\ref{lem:log-control}(a) and
\eqref{eq:tail-envelope} imply that
\[
 \int_M|\nabla u_p|^{p+z}
 (1+|\log|\nabla u_p||^2)\dd V<\infty
\]
for all $|z|$ sufficiently small.
The weak equation and cutoff approximation also give
\[
 -\int_M X_p\cdot\nabla u\dd V=1,
 \qquad
 X_p=-\frac{|\nabla u_p|^{p-2}\nabla u_p}{C_M(p)},
\]
for every admissible competitor $u$.

Keeping $u_p$ and $X_p$ fixed while varying the
exponent to $q$, the primal and dual bounds yield
\[
\begin{aligned}
 C_M(p)
 \left(\int_M|\nabla u_p|^{\frac{p-q}{q-1}}
 \dd\mu_p\right)^{-(q-1)}
 &\le C_M(q)\\
 &\le C_M(p)
 \int_M|\nabla u_p|^{q-p}\dd\mu_p.
\end{aligned}
\]

Since $\mu_p$ is a probability measure, differentiation
under the integral sign gives
\[
 \left.\frac{d}{dz}
 \log\int_M|\nabla u_p|^z\dd\mu_p\right|_{z=0}
 =\int_M\log|\nabla u_p|\dd\mu_p.
\]
The preceding integrability also justifies the
second derivative near zero, and hence
\[
 \log\int_M|\nabla u_p|^z\dd\mu_p
 =z\int_M\log|\nabla u_p|\dd\mu_p+O_p(z^2).
\]

Now set $q=p+\eta$. Taking logarithms of the
two comparison bounds and applying this expansion,
we obtain
\[
\begin{aligned}
 &\eta\int_M\log|\nabla u_p|\dd\mu_p+O_p(\eta^2)\le \log C_M(p+\eta)-\log C_M(p)\le
 \eta\int_M\log|\nabla u_p|\dd\mu_p+O_p(\eta^2).
\end{aligned}
\]
Thus
\[
 \log C_M(p+\eta)-\log C_M(p)
 =\eta\int_M\log|\nabla u_p|\dd\mu_p
 +O_p(\eta^2),
\]
which proves \eqref{eq:capacity-derivative}.
\end{proof}

Combining \eqref{eq:capacity-derivative} with
\eqref{eq:main-first-moment} and the preceding
transfer of logarithmic moments yields
\begin{equation}\label{eq:global-Lprime}
 \frac{d}{dp}\log C_M(p)\Big|_{p=1+\eps}
 =\int_M\log|\nabla u_\eps|\dd\mu_\eps
 =\frac12|\log\eps|-\frac12\log\frac\pi2
 -\E\log g-\frac12
 +O(\sqrt\eps|\log\eps|).
\end{equation}

% ===== sec5_schwarzschild.tex =====
\section{Schwarzschild comparison and proof of Theorem~\ref{thm:sch-global}}\label{sec:boundary-gap}

Throughout, $n=3$, $K$ is the Gauss curvature of $(\Sigma,h)$,
$r_A=\sqrt{A/(4\pi)}$ and $\mathcal R=\frac12(R_{\mathfrak g}+|\mathrm{II}|^2)|_\Sigma\ge0$.
By the Gauss equation for the minimal surface $\Sigma$,
\begin{equation}\label{eq:boundary-gauss}
 V=-|\mathrm{II}|^2-\Ric_{\mathfrak g}(N,N)=K-\mathcal R,\qquad J=-\Delta_h+K-\mathcal R .
\end{equation}

The Schwarzschild exterior with horizon area $A$ satisfies the hypotheses
of Theorem~\ref{thm:global}. Its metric is scalar flat, and its horizon
is a round, totally geodesic sphere of radius $r_A$. Hence
$\mathcal R=0$, $V=K=r_A^{-2}$ and $g\equiv r_A$.
The doubled Schwarzschild metric is reflection symmetric across the
horizon, so $\kappa_+=0$ by \eqref{eq:kappa-area}.
Theorem~\ref{thm:global}(i) therefore gives
\begin{equation}\label{eq:schwarzschild-endpoint}
 \log\frac{C_{\mathrm{Sch}}(1+\eps;A)}A
 =\frac\eps2|\log\eps|
 +\eps\Bigl(-\frac12\log\frac\pi2-\log r_A\Bigr)
 +O(\eps^2).
\end{equation}

This agrees with the closed form
$C_{\mathrm{Sch}}(1+\eps;A)=4\pi r_A^{2-\eps}\,\mathrm B\bigl(\frac{2-\eps}\eps,\frac12\bigr)^{-\eps}$,
which follows from the explicit $p$-capacitary potential of the
Schwarzschild exterior \cite{XiaYinZhou2024}.

\begin{proof}[Proof of Theorem~\ref{thm:sch-global}]
Testing \eqref{eq:strict-stability} with $1$ gives
$\int_\Sigma V\dd A>0$. Since $K\ge V$, we have
$\int_\Sigma K\dd A>0$, so $\Sigma\cong S^2$ by Gauss--Bonnet.
Moreover, $-\Delta_h+K\ge J>0$, and Lemma~\ref{lem:selection}
gives a unique positive solution $G$ of
$(-\Delta_h+K)G=G^{-1}$.

Recall that $d(h)=\E\log(G/r_A)$.
We show that $d(h)\ge0$, with equality exactly when $h$ is round. Dividing
$(-\Delta_h+K)G=G^{-1}$ by $G$ and integrating with $\int_\Sigma K\dd A=4\pi$,
\[
 \E G^{-2}+\E|\nabla\log G|^2=\frac{4\pi}A=r_A^{-2}.
\]
Jensen's inequality $\E G^{-2}\ge e^{-2\E\log G}$ gives $d(h)=\E\log(G/r_A)\ge0$;
equality forces $G$ to be constant and $K=G^{-2}$, so $h$ is round, and
conversely round $h$ has $G=r_A$.

Then $g\ge G$, with equality exactly when $\mathcal R\equiv0$. Subtracting
the equations of $g$ and $G$,
$(J+(gG)^{-1})(g-G)=\mathcal R\,G\ge0$, and testing with
$\phi=\min(g-G,0)$,
\[
 0\ge\int_\Sigma\phi\,\mathcal RG\dd A
 =\int_\Sigma\Bigl(|\nabla\phi|^2+\bigl(V+(gG)^{-1}\bigr)\phi^2\Bigr)\dd A
 \ge\lambda_1(J)\int_\Sigma\phi^2\dd A,
\]
so $g\ge G$. If $\mathcal R\not\equiv0$, the strong maximum principle gives
$g>G$; if $\mathcal R\equiv0$, uniqueness gives $g=G$. Hence
\begin{equation}\label{eq:intrinsic-comparison}
 \E\log\frac g{r_A}\ \ge\ d(h)\ \ge\ 0,
\end{equation}
with equality on the left exactly when $\mathcal R\equiv0$.

\emph{The deficit.} Subtracting \eqref{eq:global-capacity-main} from
\eqref{eq:schwarzschild-endpoint},
\begin{equation}\label{eq:global-deficit-refined}
 \log\frac{C_{\mathrm{Sch}}(1+\eps;A)}{C_M(1+\eps)}
 =\eps\,\E\log\frac g{r_A}-\kappa_+\eps^{3/2}+O(\eps^2),
\end{equation}
which gives \eqref{eq:Delta-intro}; with \eqref{eq:intrinsic-comparison} this
proves $\Delta_M\ge d(h)\ge0$ and the two equality statements. In
particular, $\Delta_M=0$ exactly when $h$ is round and $\mathcal R\equiv0$.

\emph{Stability.} With $f=\log g$, dividing $Jg=g^{-1}$ by $g$ and using
\eqref{eq:boundary-gauss} and Gauss--Bonnet,
\begin{equation}\label{eq:actual-Gauss-log}
 K=g^{-2}+\Delta_hf+|\nabla f|^2+\mathcal R,\qquad
 \E g^{-2}+\E|\nabla f|^2+\E\mathcal R=\frac{4\pi}A .
\end{equation}
Put $\theta=\frac A{4\pi}(\E|\nabla f|^2+\E\mathcal R)$, so that
$\E g^{-2}=\frac{4\pi}A(1-\theta)$ and $0\le\theta<1$, and
$d\rho=g^{-2}\dd\nu/\E g^{-2}$. Since $\log\frac{4\pi}A=-2\log r_A$,
\begin{equation}\label{eq:KL-identity}
 \KL(\nu\Vert\rho)=\log\E g^{-2}+2\E\log g=2\Delta_M+\log(1-\theta).
\end{equation}
Both $\KL(\nu\Vert\rho)$ and $-\log(1-\theta)$ are nonnegative, so each is at
most $2\Delta_M$. Pinsker's inequality
$\|\nu-\rho\|_{L^1}\le\sqrt{2\KL(\nu\Vert\rho)}$, with
$\|\nu-\rho\|_{L^1}=\|g^{-2}/\E g^{-2}-1\|_{L^1(\nu)}$, gives the first bound in
\eqref{eq:Delta-stability-intro}; and $-\log(1-\theta)\le2\Delta_M$ gives
$\theta\le1-e^{-2\Delta_M}\le2\Delta_M$, which is the second.
\end{proof}

With $\lambda_1(-\Delta_h)$ the first positive eigenvalue of the Laplacian
of $(\Sigma,h)$, the Poincar\'e inequality
$\lambda_1(-\Delta_h)\Var_\nu f\le\E|\nabla f|^2$ and
Theorem~\ref{thm:global}(iii) also bound the excess variance:
\[
 \lim_{\eps\downarrow0}\Var_{\mu_\eps}(\log|\nabla u_\eps|)-\frac12=\Var_\nu(\log g)
 \le\frac{8\pi}{A\,\lambda_1(-\Delta_h)}\Delta_M .
\]

If moreover $R_{\mathfrak g}\ge0$ on $M$ and $H_2(M,\Sigma)=0$, then
$C_M(p)\le C_{\mathrm{Sch}}(p;A)$ for every $1<p<3$
\cite{XiaYinZhou2024}, and \eqref{eq:global-deficit-refined}
shows that $\kappa_+\le0$ whenever $\Delta_M=0$, that is, whenever $h$ is
round and $\mathcal R\equiv0$: in that case
$\log(C_{\mathrm{Sch}}/C_M)=-\kappa_+\eps^{3/2}+O(\eps^2)$ must be
nonnegative.

The upper capacity bound also has a purely local consequence,
requiring neither asymptotic flatness nor outermostness.
Let a smooth connected noncompact three-manifold have compact
connected orientable strictly stable minimal boundary with
$\mathcal R\ge0$, and define $C_M(p)$ by \eqref{eq:exterior-capacity}; it may vanish. The zero extension of the
trial function of Section~\ref{sec:primal} is a compactly supported
competitor, so $C_M(1+\eps)\le\Cupper$, and \eqref{eq:one-sided-refined}
for $\Cupper$, \eqref{eq:schwarzschild-endpoint} and
\eqref{eq:intrinsic-comparison} give, for small $\eps$ and a constant $B$
depending on the collar,
\[
 C_M(1+\eps)\le C_{\mathrm{Sch}}(1+\eps;A)\,
 \exp\bigl(-\eps\,\E\log(g/r_A)+B\eps^{3/2}\bigr)
 \le C_{\mathrm{Sch}}(1+\eps;A)\,e^{-\eps d(h)+B\eps^{3/2}} .
\]
The global hypotheses of Theorem~\ref{thm:sch-global} identify the boundary
coefficient $\E\log(g/r_A)$ with the limit $\Delta_M$ and $\Var_\nu(\log g)$
with the excess global fluctuation. The positivity of $d(h)$ for nonround
$h$ uses that Gauss--Bonnet fixes the mean of $K$ by the area, which is
specific to surfaces.

\appendix
% ===== appA_asymptotics.tex =====
\section{Asymptotic calculations}\label{app:asymptotics}

\subsection{A half-line Laplace lemma}\label{app:laplace}

\begin{lemma}[Half-line Laplace calculation]\label{lem:laplace}
Let $d>0$ be fixed, and suppose that, uniformly in $x$ and
in small $\eps>0$, on $0\le t\le d$,
\begin{align*}
 \phi_\eps(t,x)
 &=\tfrac12a(x)t^2+a_{3,\eps}(x)t^3-\eps B_\eps(x)t
   +O\bigl(t^4+\eps t^2+\eps^2t\bigr),\\
 q_\eps(t,x)
 &=q_{0,\eps}(x)
   \bigl(1+c_\eps(x)t+O(t^2+\eps t)\bigr).
\end{align*}
Assume that all coefficients are uniformly bounded, that $a$ and
$q_{0,\eps}$ are uniformly positive, and that
$\phi_\eps\ge ct^2-C\eps^2$ for some $c,C>0$.
Then, uniformly in $x$,
\begin{equation}\label{eq:Laplace-general}
 \int_0^d q_\eps e^{-\phi_\eps/\eps}\dd t
 =q_{0,\eps}\sqrt{\frac\eps a}
 \Biggl[
  \sqrt{\frac\pi2}
  +\sqrt\eps\left(
    \frac{B_\eps+c_\eps}{\sqrt a}
    -\frac{2a_{3,\eps}}{a^{3/2}}
  \right)
  +O(\eps)
 \Biggr].
\end{equation}
After the substitution $t=\sqrt{\eps/a}\,y$, the function
\[
 \rho_\eps(y,x)
 =\frac{q_\eps(t,x)e^{-\phi_\eps(t,x)/\eps}}
        {q_{0,\eps}(x)\sqrt{\pi/2}},
\]
extended by zero for $t>d$, satisfies, for every fixed integer $k\ge0$,
\begin{equation}\label{eq:laplace-weighted}
 \sup_x\int_0^\infty
 (1+y^k)|\rho_\eps(y,x)-\gamma_+(y)|\dd y
 \le C_k\sqrt\eps.
\end{equation}
The same bound holds after normalization of $\rho_\eps$
to total mass one.
\end{lemma}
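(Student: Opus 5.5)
The plan is to rescale $t=\sqrt{\eps/a}\,y$ and then split the $y$-range at $Y_\eps=\eps^{-1/6}$, a small power chosen so that the cubic and quartic errors remain controlled. On the core $0\le y\le Y_\eps$ we expand the integrand about the Gaussian. On the tail $y\ge Y_\eps$ we rely only on the lower bound $\phi_\eps\ge ct^2-C\eps^2$.

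\emph{Core.} With $t=\sqrt{\eps/a}\,y$ the hypothesis gives
\[
\frac{\phi_\eps}\eps=\frac{y^2}2+\sqrt\eps\Bigl(\frac{a_{3,\eps}}{a^{3/2}}y^3-\frac{B_\eps}{\sqrt a}y\Bigr)+O\bigl(\eps(y^4+y^2)+\eps^{3/2}y\bigr),
\]
and
\[
\frac{q_\eps}{q_{0,\eps}}=1+\sqrt\eps\,\frac{c_\eps}{\sqrt a}\,y+O(\eps y^2).
\]
For $y\le Y_\eps$ the whole exponent correction $E$ satisfies $|E|\le C\eps^{1/2}(1+y^3)+C\eps(1+y^4)$, which is $O(1)$. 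We can therefore use $|e^{-E}-1+E|\le CE^2e^{|E|}$ with $e^{|E|}\le C$. This gives, pointwise on the core,
\[
q_\eps e^{-\phi_\eps/\eps}=q_{0,\eps}e^{-y^2/2}\Bigl[1+\sqrt\eps\Bigl(\frac{(B_\eps+c_\eps)y}{\sqrt a}-\frac{a_{3,\eps}y^3}{a^{3/2}}\Bigr)+O\bigl(\eps(1+y^8)\bigr)\Bigr].
\]
Multiplying by $dt=\sqrt{\eps/a}\,dy$ and integrating against $e^{-y^2/2}$ with the moments $\int ye^{-y^2/2}=1$ and $\int y^3e^{-y^2/2}=2$ produces the bracket in \eqref{eq:Laplace-general}. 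Extending the Gaussian integrals from $[0,Y_\eps]$ to $[0,\infty)$ costs only $e^{-cY_\eps^2}$, which is superpolynomially small. The same pointwise formula, divided by $\sqrt{\pi/2}$, gives $|\rho_\eps-\gamma_+|\le C\sqrt\eps(1+y^3)e^{-y^2/2}+C\eps(1+y^8)e^{-y^2/2}$ on the core. Integrating this against $1+y^k$ gives $C_k\sqrt\eps$.

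\emph{Tail.} For $y\ge Y_\eps$ (with $t\le d$) we have $\phi_\eps/\eps\ge cy^2/a-C\eps$, and $q_\eps\le Cq_{0,\eps}$. Hence both $\rho_\eps$ and $\gamma_+$ are bounded by $Ce^{-c'y^2}$ there, and $\int_{Y_\eps}^\infty(1+y^k)e^{-c'y^2}dy\le C_ke^{-c''\eps^{-1/3}}=O(\eps)$. This handles both the tail part of the integral \eqref{eq:Laplace-general} and the tail part of \eqref{eq:laplace-weighted}. The region $t>d$ contributes nothing to $\rho_\eps$ by the zero extension, and $\gamma_+$ restricted to $y>d\sqrt{a/\eps}$ is exponentially small.

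\emph{Normalization.} The total mass of $\rho_\eps$ is $1+O(\sqrt\eps)$ by \eqref{eq:Laplace-general}. For the normalized density we write $\rho_\eps/m_\eps-\gamma_+=(\rho_\eps-\gamma_+)/m_\eps+\gamma_+(1/m_\eps-1)$. Both terms are $O(\sqrt\eps)$ in the weighted $L^1$ norm.

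The main obstacle is the core bookkeeping. The cutoff $Y_\eps$ must make both $\sqrt\eps\,y^3$ and $\eps y^4$ uniformly bounded, and the second-order remainder must stay polynomially weighted, so that it is absorbed by the Gaussian. A second point to check is that the $\eps^2t$ error term in $\phi_\eps$, which after rescaling becomes $O(\eps^{3/2}y)$, is harmless; it is, so the exponent $1/6$ should work.
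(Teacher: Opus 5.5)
Your proof is correct and follows essentially the same route as the paper: rescale $t=\sqrt{\eps/a}\,y$, expand $q_\eps e^{-\phi_\eps/\eps}$ to second order about $e^{-y^2/2}$, integrate the first and third Gaussian moments, and dispose of large $t$ using the lower bound $\phi_\eps\ge ct^2-C\eps^2$. The only difference is how the second-order remainder is controlled. You cut off at $y\le\eps^{-1/6}$ so that the exponent error stays $O(1)$. The paper instead bounds the Taylor remainder by $\frac12\bigl|\phi_\eps/\eps-y^2/2\bigr|^2\bigl(e^{-\phi_\eps/\eps}+e^{-y^2/2}\bigr)\le C\eps(1+y^8)e^{-c'y^2}$, which is valid on the whole range $t\le t_0$ and needs no shrinking cutoff.
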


\begin{proof}
Fix $x$ and substitute $t=\sqrt{\eps/a}\,y$. Then
\begin{align*}
 \frac{\phi_\eps}{\eps}
 &=\frac{y^2}{2}
 +\sqrt\eps\left(
 \frac{a_{3,\eps}}{a^{3/2}}y^3
 -\frac{B_\eps}{\sqrt a}y\right)
 +O\bigl(\eps(y^4+y^2)+\eps^{3/2}y\bigr),\\
 \frac{q_\eps}{q_{0,\eps}}
 &=1+\sqrt\eps\,\frac{c_\eps}{\sqrt a}y
 +O\bigl(\eps(y^2+y)\bigr).
\end{align*}
Choose a small fixed $t_0\in(0,d)$.
On $0\le t\le t_0$, the phase expansion gives
\[
 \left|\frac{\phi_\eps}{\eps}-\frac{y^2}{2}\right|
 \le C\sqrt\eps(1+y^4),
 \qquad
 \left|\frac{\phi_\eps}{\eps}-\frac{y^2}{2}\right|^2
 \le C\eps(1+y^8).
\]
Moreover, the lower bound on $\phi_\eps$ implies
\[
 e^{-\phi_\eps/\eps}+e^{-y^2/2}
 \le Ce^{-c'y^2}
\]
for some $c'>0$. Taylor's formula with integral
remainder therefore yields
\[
\begin{aligned}
 \left|e^{-\phi_\eps/\eps}
 -e^{-y^2/2}
 +e^{-y^2/2}
 \left(\frac{\phi_\eps}{\eps}-\frac{y^2}{2}\right)
 \right|
 \le
 \frac12\left|
 \frac{\phi_\eps}{\eps}-\frac{y^2}{2}
 \right|^2
 \left(e^{-\phi_\eps/\eps}+e^{-y^2/2}\right)\le C\eps(1+y^8)e^{-c'y^2}.
\end{aligned}
\]
Substituting the expansion of $\phi_\eps/\eps$
and multiplying by that of $q_\eps/q_{0,\eps}$,
we obtain
\[
\begin{aligned}
 &\left|\rho_\eps(y,x)-\gamma_+(y)
 \left[1+\sqrt\eps\left(
 \frac{B_\eps+c_\eps}{\sqrt a}y
 -\frac{a_{3,\eps}}{a^{3/2}}y^3
 \right)\right]\right|\\
 &\qquad\le C\eps(1+y^8)e^{-c'y^2}.
\end{aligned}
\]

For $t_0\le t\le d$, the assumption
$\phi_\eps\ge ct^2-C\eps^2$ makes the contribution
exponentially small, also with any fixed polynomial
weight in $y$. The Gaussian tails beyond
$t_0\sqrt{a/\eps}$ satisfy the same bound.
Thus the expansion can be integrated over $[0,\infty)$,
with an $O(\eps)$ error since
\[
 \eps\int_0^\infty(1+y^8)e^{-c'y^2}\dd y=O(\eps).
\]
Using
\[
 \int_0^\infty e^{-y^2/2}\dd y=\sqrt{\frac\pi2},
 \quad
 \int_0^\infty ye^{-y^2/2}\dd y=1,
 \quad
 \int_0^\infty y^3e^{-y^2/2}\dd y=2,
\]
and $\dd t=\sqrt{\eps/a}\,\dd y$ gives
\eqref{eq:Laplace-general}.

The same error estimate yields, for every fixed $k$,
\[
\begin{aligned}
 \int_0^\infty(1+y^k)|\rho_\eps-\gamma_+|\dd y
 &\le C_k\sqrt\eps
 \int_0^\infty(1+y^k)(y+y^3)e^{-y^2/2}\dd y\\
 &\quad+C_k\eps
 \int_0^\infty(1+y^{k+8})e^{-c'y^2}\dd y
 +O(e^{-c/\eps})\\
 &=O(\sqrt\eps),
\end{aligned}
\]
which proves \eqref{eq:laplace-weighted}.
Finally, \eqref{eq:Laplace-general} gives
$\int_0^\infty\rho_\eps\dd y=1+O(\sqrt\eps)$.
Dividing by this mass preserves the weighted
$L^1$ estimate.
\end{proof}

\subsection{Expansion of the two bounds for a general lapse}\label{app:bounds}
\begin{proof}[Proof of the expansions
\eqref{eq:primal-full} and \eqref{eq:dual-full}]
For every fixed smooth function $\varphi$ on $\Sigma$,
\[
 \frac{\int_\Sigma v^{-\eps}\varphi\dd A}
 {\int_\Sigma v^{-\eps}\dd A}
 =\E\varphi+O(\eps),
\]
and
\[
 \log\left(\frac1A\int_\Sigma v^{-\eps}\dd A\right)
 =-\eps\E\log v+O(\eps^2).
\]

\emph{The upper bound.}
By \eqref{eq:collar-expansions},
\[
 w-\eps\log(\ell/v)
 =\frac12vJv\,t^2+a_3t^3-\eps b_1t
 +O(t^4+\eps t^2).
\]
Write
\[
 \Phi_\eps(t)
 =\log\frac{\int_\Sigma\ell^{-\eps}e^w\dd A}
 {\int_\Sigma v^{-\eps}\dd A}
 =\log\frac{
 \int_\Sigma v^{-\eps}
 e^{w-\eps\log(\ell/v)}\dd A}
 {\int_\Sigma v^{-\eps}\dd A}.
\]
The oscillation in $x$ of the exponent is
$O(t^2+\eps t)$. Expanding the logarithm of
the weighted exponential average and using
the preceding weighted-average estimate, we obtain
\[
 \begin{aligned}
 \Phi_\eps(t)
 =\frac12\E(vJv)\,t^2
   +\E a_3\,t^3
   -\eps\E b_1\,t+O(t^4+\eps t^2+\eps^2t).
 \end{aligned}
\]
Strict stability gives $\E(vJv)>0$.
After decreasing $d$ if necessary,
\[
 \Phi_\eps(t)
 \ge\frac14\E(vJv)\,t^2-C\eps^2
 \qquad(0\le t\le d).
\]
Thus Lemma~\ref{lem:laplace} applies and yields
\[
 \begin{aligned}
 \int_0^d
 \left(\int_\Sigma\ell^{-\eps}e^w\dd A\right)^{-1/\eps}
 \dd t &=
 \left(\int_\Sigma v^{-\eps}\dd A\right)^{-1/\eps}
 \int_0^d e^{-\Phi_\eps(t)/\eps}\dd t\\
 &=
 \left(\int_\Sigma v^{-\eps}\dd A\right)^{-1/\eps}
 \sqrt{\frac{\pi\eps}{2\E(vJv)}}
 \bigl(1+O(\sqrt\eps)\bigr).
 \end{aligned}
\]
By \eqref{eq:primal-bound}, taking $-\eps\log$
and using the first expansion gives
\[
 \begin{aligned}
 \log\frac{\Cupper}{A}
 &=\log\left(\frac1A\int_\Sigma v^{-\eps}\dd A\right)
 -\frac\eps2\log\frac{\pi\eps}{2\E(vJv)}
 +O(\eps^{3/2})\\
 &=\frac\eps2|\log\eps|
 +\eps\left(
 \frac12\log\frac{\E(vJv)}{\pi/2}
 -\E\log v\right)
 +O(\eps^{3/2}),
 \end{aligned}
\]
which proves \eqref{eq:primal-full}.

\emph{The lower bound.}
Suppose $Jv>0$. By \eqref{eq:flux-field},
\[
 |X|=
 \frac{(Jv/v)^{\eps/2}}
 {\int_\Sigma(Jv/v)^{\eps/2}\dd A}\,e^{-w}.
\]
Since $dV=\ell e^w\dd t\dd A$, we have
\[
 \begin{aligned}
 \int_U|X|^{(1+\eps)/\eps}\dd V=
 \left(\int_\Sigma(Jv/v)^{\eps/2}\dd A
 \right)^{-(1+\eps)/\eps}
 \int_\Sigma\left(\frac{Jv}{v}\right)^{(1+\eps)/2}
 \left(\int_0^d\ell e^{-w/\eps}\dd t\right)\dd A.
 \end{aligned}
\]
Here we used
$e^{-w(1+\eps)/\eps}e^w=e^{-w/\eps}$.

The inner integral has phase and amplitude
\[
 w=\frac12vJv\,t^2+a_3t^3+O(t^4),
 \qquad
 \ell=v(1+b_1t+O(t^2)).
\]
Since $vJv>0$, Lemma~\ref{lem:laplace}
gives, uniformly in $x$,
\[
 \int_0^d\ell e^{-w/\eps}\dd t
 =v\sqrt{\frac{\pi\eps}{2vJv}}
 \bigl(1+O(\sqrt\eps)\bigr).
\]
Now
\[
 \left(\frac{Jv}{v}\right)^{1/2}
 \frac{v}{\sqrt{vJv}}=1,
\]
so the weight in the outer integral reduces to
$(Jv/v)^{\eps/2}$. Consequently,
\begin{equation}\label{eq:T-asymptotics-app}
 \begin{aligned}
 \int_U|X|^{(1+\eps)/\eps}\dd V=
 \left(\int_\Sigma(Jv/v)^{\eps/2}\dd A
 \right)^{-1/\eps}
 \sqrt{\frac{\pi\eps}{2}}
 \bigl(1+O(\sqrt\eps)\bigr).
 \end{aligned}
\end{equation}

Finally,
\[
 \log\left(
 \frac1A\int_\Sigma(Jv/v)^{\eps/2}\dd A
 \right)
 =\frac\eps2\E\log\frac{Jv}{v}+O(\eps^2).
\]
Since $\Clower$ is given by
\eqref{eq:dual-bound}, taking $-\eps\log$
in \eqref{eq:T-asymptotics-app} yields
\[
 \begin{aligned}
 \log\frac{\Clower}{A}
 &=\log\left(
 \frac1A\int_\Sigma(Jv/v)^{\eps/2}\dd A
 \right)
 -\frac\eps2\log\frac{\pi\eps}{2}
 +O(\eps^{3/2})\\
 &=\frac\eps2|\log\eps|
 +\eps\left(
 \frac12\E\log\frac{Jv}{v}
 -\frac12\log\frac\pi2
 \right)
 +O(\eps^{3/2}),
 \end{aligned}
\]
which proves \eqref{eq:dual-full}.
\end{proof}

\subsection{The two-sided layer}\label{app:two-sided}
For an interior two-sided $\Sigma$ let $U_{\mathrm{two}}=(-d,d)\times\Sigma$
with traces $1$ on $\{t=-d\}$ and $0$ on $\{t=d\}$, and let
$C_{\mathrm{two}}$ be its capacity with exponent $1+\eps$.
Let $\gamma(y)=(2\pi)^{-1/2}e^{-y^2/2}$ on $\R$ and $P(y)=\int_y^\infty\gamma$.

\begin{proposition}[Two-sided cancellation]\label{thm:two-sided}
For $v=g$,
\begin{equation}\label{eq:two-capacity}
 \log\frac{C_{\mathrm{two}}}A
 =\frac\eps2|\log\eps|+\eps\Bigl(-\frac12\log(2\pi)-\E\log g\Bigr)+O(\eps^2),
\end{equation}
and Proposition~\ref{prop:selection-local}\textup{(i)--(iii)} and Proposition~\ref{prop:moments} hold
for $U_{\mathrm{two}}$ with $y\in\R$, $\gamma$, $P$
and the limit law $-\frac12\log(2\pi)-\log g(x)-y^2/2$; the limiting
variance $\frac12+\Var_\nu(\log g)$ and the rates are unchanged.
\end{proposition}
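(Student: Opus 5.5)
We would run the one-sided argument of Sections~\ref{sec:prelim}--\ref{sec:local} verbatim on $U_{\mathrm{two}}=(-d,d)\times\Sigma$, using the two-sided collar coordinates built from $v=g$. In these coordinates \eqref{eq:g-collar} holds for $t\in(-d,d)$, so $w=\tfrac12t^2+a_3t^3+O(t^4)$ and $\ell=g(1+b_1t+O(t^2))$. As in \eqref{eq:primal-bound}, the upper bound comes from one-dimensional competitors $\phi(t)$ with $\phi(-d)=1$ and $\phi(d)=0$. The flux field $X$ of \eqref{eq:flux-field} is still divergence free with unit flux through every leaf $\{t\}\times\Sigma$, $-d\le t\le d$, so \eqref{eq:flux-identity} and \eqref{eq:dual-bound} hold unchanged with $\int_U$ over $(-d,d)\times\Sigma$. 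Thus $\Clower\le C_{\mathrm{two}}\le\Cupper$, with $T,S$ defined as before but with the $t$-integrals taken over $(-d,d)$.

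The key step is a two-sided version of Lemma~\ref{lem:laplace}. On the whole line the odd terms $b_1y$ and $a_3y^3$ integrate to zero against $e^{-y^2/2}$. So instead of \eqref{eq:fiber-dual} we get
\[
 \int_{-d}^d\ell e^{-w/\eps}\dd t=g\sqrt\eps\bigl[\sqrt{2\pi}+O(\eps)\bigr],
\]
uniformly in $x$, and likewise for $S$. To prove this I would split the integral at $t=0$ and apply Lemma~\ref{lem:laplace} on each half-line, the negative half after $t\mapsto-t$, which flips the signs of $b_1$ and $a_3$. The $\sqrt\eps$ corrections then cancel exactly, in agreement with the remark after \eqref{eq:kappa-area} that reversing the normal changes the sign of $\kappa_+$. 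The primal phase \eqref{eq:primal-phase} is handled the same way, since $\Phi$ has the same expansion for $t<0$ and satisfies $\Phi\ge ct^2-C\eps^2$. Hence $T/S=1+O(\eps)$, giving $C_{\mathrm{two}}/\Clower=1+O(\eps^2)$. Taking $-\eps\log T$ with $\sqrt{2\pi}$ in place of $\sqrt{\pi/2}$ and using \eqref{eq:nu-eps} gives \eqref{eq:two-capacity}; there is no $\eps^{3/2}$ term because its coefficient is the sum of the two half-line contributions, which cancel.

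The remaining claims transfer unchanged. The comparison field $\bb$ and \eqref{eq:near-min} give Proposition~\ref{prop:selection-local}(i) with an $O(\eps)$ energy excess. The rescaled density $\vartheta_\eps$ is now compared with $\gamma$ on $\R$, and the weighted estimate \eqref{eq:density-distance} follows from the two half-line applications of \eqref{eq:laplace-weighted}, after renormalizing each half to mass $\tfrac12$; this gives (ii). For the profile (iii), I would need two-sided analogues of the barriers $f_\pm$ of Lemma~\ref{lem:barriers}, namely one-dimensional $(1+\eps)$-sub/supersolutions on $(-d,d)$ with error-function asymptotics $P(y)+O(\sqrt\eps)$. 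This is the main obstacle: the barrier construction must be checked across $t=0$, where $w_t$ changes sign. I would first try $f(t)=\int_t^d e^{-(\frac12\pm C\eta)s^2/\eps}\dd s$ normalized, with $\eta$ small, and verify the sign of $\Delta_{1+\eps}f$ using $w_t=t+O(t^2)$ and the fact that $\ell$ is bounded away from zero. Lemma~\ref{lem:gradient} does not use the one-sided structure beyond boundary gradient bounds, which the barriers provide. Finally, Lemma~\ref{lem:transfer} and the proof of Proposition~\ref{prop:moments} go through with $Y=-\tfrac12\log(2\pi)-\log g-y^2/2$ under $\nu\otimes\gamma$. Since $y^2$ has the same law under $\gamma$ and $\gamma_+$, the variance $\tfrac12+\Var_\nu(\log g)$ and all rates are unchanged.
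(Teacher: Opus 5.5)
For the capacity expansion, the gradient selection (i), the energy law (ii) and the moment computation, your argument is the paper's. The paper also runs the primal--dual bounds on $[-d,d]$ and uses a full-line Laplace calculation in which the odd terms $b_1y$ and $a_3y^3$ cancel. Your splitting at $t=0$ followed by $t\mapsto-t$ is a valid way to obtain that calculation. One slip: the energy excess $C_{\mathrm{two}}-\Clower$ is $O(\eps^2)$, as you derived just before. What is $O(\eps)$ is the right-hand side of \eqref{eq:near-min}, and that is what yields the $O(\sqrt\eps)$ bound in (i).

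The gap is the barrier step. It is needed for (iii), and it also supplies the boundary gradient bounds that Lemma~\ref{lem:gradient}, and hence Proposition~\ref{prop:moments}, relies on. Your tentative barrier with phase $(\tfrac12\pm C\eta)s^2$ does not work. By \eqref{eq:transverse-p-laplacian}, if $f'=-Z^{-1}e^{-\phi/\eps}$ then
\[
\Delta_{1+\eps}f=\ell^{-1-\eps}|f'|^{\eps-1}f'(D-\phi'),\qquad D=w_t-\eps\,\partial_t\log\ell=t+O(t^2+\eps).
\]
So a lower barrier needs $\phi'\ge D$ on all of $[-d,d]$, and an upper barrier needs $\phi'\le D$. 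Your choice gives $\phi'-t=\pm2C\eta t$, which is odd in $t$. For $t<0$ the candidate lower barrier has $\phi'=(1+2C\eta)t<t$. Hence $\phi'\ge D$ fails wherever $2C\eta|t|$ exceeds the $O(t^2+\eps)$ drift error, which is most of the negative half; the same happens for the upper barrier. Even where the sign is right, a fixed $\eta$ costs $O(\eta)$ in the profile instead of $O(\sqrt\eps)$. Letting $\eta\to0$ instead loses control of the $O(t^2)$ drift error at $|t|\sim d$.

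In fact no new construction is needed. The phases \eqref{eq:barriers} have $\phi_\pm'=t\pm K_0(t^2+\eps)$, a perturbation of fixed sign for both signs of $t$. Choose $K_0$ with $|D-t|\le K_0(t^2+\eps)$ for $|t|\le d$; then $f_\pm$ are sub- and supersolutions on all of $[-d,d]$. The sign change of $w_t$ that you worried about plays no role, since only $\phi_-'\le D\le\phi_+'$ and $f_\pm'<0$ enter. Normalize by $\int_{-d}^de^{-\phi_\pm/\eps}$. The lower bound $\phi_\pm\ge\tfrac13t^2-3K_0^2\eps^2$ still holds for $t<0$, because $K_0|t|^3/3\le t^2/12$ and $K_0\eps|t|\le t^2/12+3K_0^2\eps^2$. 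Your split Laplace argument then gives $|f_\pm(\sqrt\eps\,y)-P(y)|\le C\sqrt\eps$ and $|f_\pm'|\le C\eps^{-1/2}$. This is exactly how the paper completes the proof.
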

\begin{proof}
The primal--dual constructions and the arguments of
Section~\ref{sec:local} apply with $[0,d]$ replaced by $[-d,d]$.
By \eqref{eq:g-collar} and the full-line version of
Lemma~\ref{lem:laplace}, the substitution $t=\sqrt\eps\,y$ gives
\[
\begin{aligned}
 \int_{-d}^d\ell e^{-w/\eps}\dd t
 &=g\sqrt\eps\Biggl[
 \int_\R e^{-y^2/2}
 \bigl(1+\sqrt\eps(b_1y-a_3y^3)\bigr)\dd y
 +O(\eps)\Biggr]\\
 &=g\sqrt{2\pi\eps}\,(1+O(\eps)).
\end{aligned}
\]
Indeed, the terms involving $y$ and $y^3$ are odd and
integrate to zero over $\R$.
The same cancellation holds for the primal integral
by \eqref{eq:primal-phase}, with $b_1$ and $a_3$
replaced by their $\nu_\eps$-averages.

Consequently, both resistance integrals equal
\[
 \Bigl(\int_\Sigma g^{-\eps}\dd A\Bigr)^{-1/\eps}
 \sqrt{2\pi\eps}\,(1+O(\eps)).
\]
Their ratio is $1+O(\eps)$, giving an $O(\eps^2)$
relative energy gap. Therefore,
\[
\begin{aligned}
 \log\frac{C_{\mathrm{two}}}A
 &=\log\frac{\int_\Sigma g^{-\eps}\dd A}A
   -\frac\eps2\log(2\pi\eps)+O(\eps^2)\\
 &=\frac\eps2|\log\eps|
   -\eps\Bigl(\frac12\log(2\pi)+\E\log g\Bigr)
   +O(\eps^2).
\end{aligned}
\]

The gradient selection, energy measure, profile, and logarithmic
moment estimates follow as in Section~\ref{sec:local},
using the full Gaussian in place of the half-Gaussian.
In particular, its second and fourth moments remain $1$ and $3$,
so the limiting variance is unchanged.
For the profile, the barriers of Appendix~\ref{app:barriers} are used on
$[-d,d]$: choose $K_0$ so that the drift bound there holds for $|t|\le d$,
and take the phases \eqref{eq:barriers} with
\[
 f_\pm(t)=
 \frac{\int_t^d e^{-\phi_\pm(s)/\eps}\dd s}
      {\int_{-d}^d e^{-\phi_\pm(s)/\eps}\dd s}.
\]
The same comparison gives $f_+\le v_\eps^{\mathrm{two}}\le f_-$ for the
minimizer $v_\eps^{\mathrm{two}}$ of $C_{\mathrm{two}}$, and the normalized
full-line Laplace estimate yields
$\sup_{|y|\le d/\sqrt\eps}|f_\pm(\sqrt\eps\,y)-P(y)|\le C\sqrt\eps$, which is
the uniform profile estimate, with the minimizer extended by one to the left
of the collar and zero to the right. The boundary derivatives of $f_\pm$ are
$O(\eps^{-1/2})$, so the gradient bound and the logarithmic moment argument
also apply.
\end{proof}
If the metric is reflection symmetric, uniqueness gives
$v_\eps^{\mathrm{two}}(-t,x)=1-v_\eps^{\mathrm{two}}(t,x)$ and
$C_U=2^{\eps}C_{\mathrm{two}}$ exactly, which accounts for the
difference $\log2$ of the finite coefficients; in general the two-sided
minimizer has nonconstant trace on $\Sigma$.

\subsection{One-dimensional barriers}\label{app:barriers}
Pointwise control of the collar minimizer $v_\eps$ comes from barriers
depending on $t$ only. For $f=f(t)$ with $f'<0$, the metric
\eqref{eq:collar-metric} gives
\begin{equation}\label{eq:transverse-p-laplacian}
 \Delta_{1+\eps}f
 =\ell^{-1-\eps}|f'|^{\eps-1}
 \bigl[\eps f''+
 (w_t-\eps\,\partial_t\log\ell)f'\bigr].
\end{equation}
By \eqref{eq:g-collar}, $w_t=t+O(t^2)$ and $\partial_t\log\ell=O(1)$.
Hence, after decreasing $d$, there is a constant $K_0>0$ with $K_0d<\frac14$ and
\[
 \bigl|w_t-\eps\,\partial_t\log\ell-t\bigr|\le K_0\,(t^2+\eps)
 \qquad\text{on }[0,d]\times\Sigma .
\]
Define
\begin{equation}\label{eq:barriers}
 \phi_\pm(t)=\frac12t^2
 \pm K_0\Bigl(\frac{t^3}{3}+\eps t\Bigr),
 \qquad
 f_\pm(t)=
 \frac{\int_t^d e^{-\phi_\pm(s)/\eps}\dd s}
 {\int_0^d e^{-\phi_\pm(s)/\eps}\dd s},
\end{equation}
and extend $f_\pm$ by zero for $t>d$.

\begin{lemma}[One-dimensional barriers]\label{lem:barriers}
With $f_\pm$ as in \eqref{eq:barriers}, on $\overline U$,
\begin{equation}\label{eq:barrier-sandwich}
 f_+(t)\le v_\eps(t,x)\le f_-(t),
\end{equation}
and, as $\eps\downarrow0$,
\begin{equation}\label{eq:barrier-asymptotics}
 \sup_{y\ge0}\bigl|f_\pm(\sqrt\eps\,y)-P_+(y)\bigr|\le C\sqrt\eps,
 \qquad
 \sqrt\eps\,|f_\pm'(0)|=\sqrt{\frac2\pi}\,\bigl(1+O(\sqrt\eps)\bigr).
\end{equation}
\end{lemma}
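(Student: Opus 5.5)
The plan has three parts. First we show that $f_+$ is a subsolution and $f_-$ a supersolution of the $(1+\eps)$-Laplacian on $U$. Then the comparison principle gives the sandwich \eqref{eq:barrier-sandwich}. Finally the normalized Laplace estimate of Lemma~\ref{lem:laplace} gives the asymptotics \eqref{eq:barrier-asymptotics}.

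\emph{Sign of the operator.} Both $f_\pm$ depend only on $t$. They satisfy $f_\pm'=-e^{-\phi_\pm/\eps}/Z_\pm<0$ on $[0,d]$, where $Z_\pm=\int_0^d e^{-\phi_\pm/\eps}$. Hence
\[
 \eps f_\pm''=-\phi_\pm' f_\pm'=\phi_\pm'|f_\pm'| .
\]
By \eqref{eq:transverse-p-laplacian} the sign of $\Delta_{1+\eps}f_\pm$ is the sign of
\[
 \eps f''+(w_t-\eps\partial_t\log\ell)f'=|f'|\bigl(\phi_\pm'-(w_t-\eps\partial_t\log\ell)\bigr),
\]
where $\phi_\pm'=t\pm K_0(t^2+\eps)$. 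The drift bound gives $|w_t-\eps\partial_t\log\ell-t|\le K_0(t^2+\eps)$. Therefore the bracket is $\ge0$ for $f_+$ and $\le0$ for $f_-$. So $\Delta_{1+\eps}f_+\ge0$, meaning $f_+$ is a subsolution, and $\Delta_{1+\eps}f_-\le0$, meaning $f_-$ is a supersolution. These computations hold classically, since the gradients never vanish.

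\emph{Comparison.} The barriers have the same boundary values as $v_\eps$: $f_\pm(0)=1$ and $f_\pm(d)=0$. The weak comparison principle for the $p$-Laplacian \cite{HeinonenKilpelainenMartio1993} then gives $f_+\le v_\eps\le f_-$ on $\overline U$. To pass from the pointwise inequality to weak sub/supersolutions, we note that $f_\pm$ are smooth up to the boundary, so we may integrate by parts against nonnegative test functions in $W^{1,1+\eps}_0(U)$. We also need $\phi_\pm$ to be increasing, so that the normalizing integrals behave. For $\phi_-$ this uses $K_0d<\tfrac14$, which gives
\[
 \phi_-'=t-K_0t^2-K_0\eps\ge \tfrac34 t-K_0\eps .
\]
This is not strictly needed for the sub/supersolution property, only for the asymptotics below.

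\emph{Asymptotics.} Write $f_\pm(\sqrt\eps y)=\int_y^{\infty}\rho_\pm(s)\,ds$, where $\rho_\pm$ is the normalized density $\sqrt\eps\,e^{-\phi_\pm(\sqrt\eps s)/\eps}/Z_\pm$, extended by zero beyond $d/\sqrt\eps$. We apply Lemma~\ref{lem:laplace} with $a=1$, $a_{3,\eps}=\pm K_0/3$, $B_\eps=\mp K_0$ and $q\equiv1$. The lower bound hypothesis reads $\phi_-\ge t^2/4-C\eps^2$. It follows from $\tfrac12t^2-\tfrac{K_0}{3}t^3\ge \tfrac{5}{12}t^2$ together with $K_0\eps t\le \tfrac1{12}t^2+C\eps^2$. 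The normalized version of \eqref{eq:laplace-weighted} with $k=0$ gives $\|\rho_\pm-\gamma_+\|_{L^1(0,\infty)}\le C\sqrt\eps$. Since $P_+(y)=\int_y^\infty\gamma_+$, this bounds $\sup_y|f_\pm(\sqrt\eps y)-P_+(y)|$, which is the first half of \eqref{eq:barrier-asymptotics}. For the derivative at the boundary,
\[
 \sqrt\eps|f_\pm'(0)|=\sqrt\eps/Z_\pm ,
\]
and \eqref{eq:Laplace-general} gives $Z_\pm=\sqrt\eps\bigl(\sqrt{\pi/2}+O(\sqrt\eps)\bigr)$. Hence $\sqrt\eps|f_\pm'(0)|=\sqrt{2/\pi}\,(1+O(\sqrt\eps))$.

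The main point requiring care is the sign computation in the first step. The cubic and $\eps t$ terms in $\phi_\pm$ are built precisely to dominate the drift error $K_0(t^2+\eps)$ from the collar metric, so the constant $K_0$ must come from the drift bound uniformly in $x\in\Sigma$. Everything else is routine.
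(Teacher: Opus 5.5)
Your proposal is correct and follows the paper's proof essentially step for step: the same sign computation from $\eps f_\pm''=-\phi_\pm'f_\pm'$ and the drift bound, the comparison principle with matching boundary values, and Lemma~\ref{lem:laplace} with $a=1$, $q\equiv1$, $a_{3,\eps}=\pm K_0/3$, $B_\eps=\mp K_0$ for both the profile estimate and the boundary derivative. The one slip is your aside that $\phi_\pm$ must be increasing: it is unnecessary and in fact false for $\phi_-$ near $t=0$, since $\phi_-'(0)=-K_0\eps$; what Lemma~\ref{lem:laplace} actually requires is the lower bound $\phi_-\ge ct^2-C\eps^2$, and you verify that correctly.
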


\begin{proof}
Since $\eps f_\pm''=-\phi_\pm'f_\pm'$,
\eqref{eq:transverse-p-laplacian} becomes
\[
 \Delta_{1+\eps}f_\pm
 =\ell^{-1-\eps}|f_\pm'|^{\eps-1}f_\pm'
 \bigl(w_t-\eps\,\partial_t\log\ell-\phi_\pm'\bigr),
\]
and $\phi_-'\le w_t-\eps\,\partial_t\log\ell\le\phi_+'$, while $f_\pm'<0$.
Thus $\Delta_{1+\eps}f_+\ge0$ and $\Delta_{1+\eps}f_-\le0$. The functions
$f_+$, $v_\eps$ and $f_-$ equal $1$ on $\Sigma$ and $0$ on $\Sigma_d$, so the
comparison principle gives \eqref{eq:barrier-sandwich}.

The phases $\phi_\pm$ satisfy the hypotheses of Lemma~\ref{lem:laplace}
with $a=1$, $q_\eps\equiv1$, $a_{3,\eps}=\pm K_0/3$ and $B_\eps=\mp K_0$;
the lower bound $\phi_\pm\ge\frac13t^2-3K_0^2\eps^2$ follows from $K_0d<\frac14$
and $K_0\eps t\le\frac1{12}t^2+3K_0^2\eps^2$. With $t=\sqrt\eps\,y$ we have
$f_\pm(\sqrt\eps\,y)=\int_y^\infty\hat\rho_\pm\dd\eta$, where $\hat\rho_\pm$
is the function $\rho_\eps$ of Lemma~\ref{lem:laplace} normalized to total
mass one. Since $P_+(y)=\int_y^\infty\gamma_+$, \eqref{eq:laplace-weighted}
with $k=0$ gives, for every $y\ge0$,
\[
 \bigl|f_\pm(\sqrt\eps\,y)-P_+(y)\bigr|
 \le\int_0^\infty|\hat\rho_\pm-\gamma_+|\dd\eta\le C\sqrt\eps .
\]
Finally, $|f_\pm'(0)|=\bigl(\int_0^de^{-\phi_\pm/\eps}\dd t\bigr)^{-1}$, and
\eqref{eq:Laplace-general} gives
$\int_0^de^{-\phi_\pm/\eps}\dd t=\sqrt{\pi\eps/2}\,\bigl(1+O(\sqrt\eps)\bigr)$.
\end{proof}

% ===== appB_imcf.tex =====
\section{Initial departure of the weak inverse mean curvature flow}
\label{app:imcf}

Let $z$ be the arrival time of the weak inverse mean curvature flow
starting from $\Sigma$, namely the proper weak solution of
\cite{HuiskenIlmanen2001} with initial region bounded by $\Sigma$
(Lemma~\ref{lem:log-control}(b)).
We show that, near the strictly stable minimal boundary, the arrival
time has a quadratic expansion determined by $g$.
The collar coordinates \eqref{eq:collar-metric} are built from $g$.

\begin{proposition}[Quadratic departure of the weak flow]
\label{prop:arrival-time}
Uniformly in $x\in\Sigma$,
\begin{equation}\label{eq:arrival-quadratic}
 z(t,x)=\frac12t^2+O(t^3)
 \qquad(t\downarrow0).
\end{equation}
Equivalently, in normal geodesic coordinates,
\begin{equation}\label{eq:arrival-normal}
 z(\exp_x(rN(x)))
 =\frac{r^2}{2g(x)^2}+O(r^3),
 \qquad
 g(x)=\lim_{r\downarrow0}
 \frac{r}{\sqrt{2z(\exp_x(rN(x)))}}.
\end{equation}
The limit is uniform on $\Sigma$.
\end{proposition}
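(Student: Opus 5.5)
Two routes are available: pass the collar barriers of Lemma~\ref{lem:barriers} to the limit $\eps\downarrow0$ through $z=\lim(-\eps\log u_\eps)$ (Lemma~\ref{lem:log-control}(b)), or use the weak formulation of the flow directly. I will take the first, since all the pieces are already in hand. By \eqref{eq:global-profile}, on the collar $\bar v_\eps\le u_\eps\le \bar v_\eps+e^{-c/\eps}$. By \eqref{eq:barrier-sandwich}, $f_+(t)\le v_\eps(t,x)\le f_-(t)$. Hence, for fixed $0<t<d$,
\[
 -\eps\log\bigl(f_-(t)+e^{-c/\eps}\bigr)\le z_\eps(t,x)\le -\eps\log f_+(t).
\]
The task then reduces to computing $\lim_{\eps\downarrow0}-\eps\log f_\pm(t)$ for fixed $t$ by one-dimensional Laplace asymptotics, and comparing the result with $e^{-c/\eps}$.

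First, with $\phi_\pm$ as in \eqref{eq:barriers}, I write $f_\pm(t)=\int_t^de^{-\phi_\pm/\eps}\big/\int_0^de^{-\phi_\pm/\eps}$. The denominator is $\sqrt{\pi\eps/2}(1+O(\sqrt\eps))$ by \eqref{eq:Laplace-general}, so $-\eps\log$ of it tends to $0$. Since $\phi_\pm'>0$ on $(0,d]$ (using $K_0d<\frac14$), the numerator is dominated by its endpoint $s=t$, so $-\eps\log\int_t^d e^{-\phi_\pm/\eps}\to\phi_\pm^0(t)$, where $\phi_\pm^0(t)=\frac12t^2\pm\frac{K_0}3t^3$ (the $\eps t$ term vanishes in the limit). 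This gives $\lim_\eps-\eps\log f_\pm(t)=\phi_\pm^0(t)$.

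Second, the term $e^{-c/\eps}$ must not spoil the lower bound. Here $c$ is a fixed positive constant, and for $t$ small we have $\phi_-^0(t)<c$, so $-\eps\log(f_-(t)+e^{-c/\eps})\to\min\{\phi_-^0(t),c\}=\phi_-^0(t)$. Letting $\eps\downarrow0$ and using Lemma~\ref{lem:log-control}(b) gives
\[
 \tfrac12t^2-\tfrac{K_0}3t^3\le z(t,x)\le\tfrac12t^2+\tfrac{K_0}3t^3
\]
for $0<t<t_0$, uniformly in $x$. This is \eqref{eq:arrival-quadratic}.

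Third, I convert to normal coordinates. From the collar construction with lapse $g$ (see the computation of $\kappa_+$), the leaf through $\exp_x(rN(x))$ is $t=r/g(x)+O(r^2)$, with the $O(r^2)$ term absorbing the tangential reparametrization $x\mapsto x+O(t^2)$ and $\ell=g+O(t^2)$. I would check that the tangential drift is $O(t^2)$, which should follow because $\partial_t=gN$ exactly at $t=0$. Substituting gives $z=r^2/(2g^2)+O(r^3)$, and the formula for $g$ as a uniform limit follows immediately because $g$ is bounded below.

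The main obstacle is the order of limits: the barrier sandwich and the exponential localization \eqref{eq:global-profile} hold only for $\eps$ small depending on $d$. I handle this by fixing $d$ and $c$ first and letting $\eps\downarrow0$ at fixed $t$. The constant $K_0$ comes from \eqref{eq:g-collar}, is independent of $\eps$ and $x$, and yields the $O(t^3)$ error. One also needs locally uniform convergence of $z_\eps$ up to the boundary, or at least pointwise convergence on $(0,t_0)\times\Sigma$; pointwise convergence suffices, since the bounds are pointwise and uniform in $x$.
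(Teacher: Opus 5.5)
Your proposal is correct and follows the paper's proof essentially step for step. You sandwich $u_\eps$ between $f_+$ and $f_-$ plus an $e^{-c/\eps}$ error (the paper writes $a_\eps+(1-a_\eps)f_-$, which is equivalent here), take $-\eps\log$ and let $\eps\downarrow0$ at fixed $t$ to get $\min\{c_d,\tfrac12t^2-\tfrac{K_0}3t^3\}\le z\le\tfrac12t^2+\tfrac{K_0}3t^3$, and then convert via $t=r/g(x)+O(r^2)$. One small slip: $\phi_-'(s)=s-K_0(s^2+\eps)$ is negative for $s\lesssim K_0\eps$, so it is not positive on all of $(0,d]$; however, you only need positivity (indeed $\phi_\pm'(s)\ge s/2$, which the paper uses) on $[t,d]$ for $\eps$ small depending on $t$, and that does hold.
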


\begin{proof}
We pass from the one-dimensional barriers of Lemma~\ref{lem:barriers} to
$z_\eps=-\eps\log u_\eps$. Fix $d>0$ as in that lemma. By
\eqref{eq:trace-log-limit},
\[
 a_\eps:=\max_{\{t=d\}}u_\eps\le e^{-c_d/\eps}
\]
for some $c_d>0$. Combining \eqref{eq:barrier-sandwich} with
$v_\eps\le u_\eps\le a_\eps+(1-a_\eps)v_\eps$ on $U$
(Section~\ref{sec:global}) gives
\begin{equation}\label{eq:arrival-sandwich}
 f_+(t)\le u_\eps(t,x)
 \le a_\eps+(1-a_\eps)f_-(t)
 \qquad(0\le t\le d).
\end{equation}

Fix $0<t_1<d/2$. For sufficiently small $\eps$,
$\phi_\pm'(s)\ge s/2$ on $[t_1,d]$.
Since $\phi_\pm'$ is also uniformly bounded,
for $t_1\le t\le d/2$ we have
\[
 \eps e^{-\phi_\pm(t)/\eps-C}
 \le\int_t^d e^{-\phi_\pm(s)/\eps}\dd s
 \le\frac{2\eps}{t}e^{-\phi_\pm(t)/\eps}.
\]
On the other hand, Lemma~\ref{lem:laplace} gives
\[
 \int_0^d e^{-\phi_\pm(s)/\eps}\dd s
 =\sqrt{\frac{\pi\eps}{2}}
 \bigl(1+O(\sqrt\eps)\bigr).
\]
Consequently,
\[
 -\eps\log f_\pm(t)
 =\phi_\pm(t)+O_{t_1}(\eps|\log\eps|)
 =\frac12t^2\pm\frac{K_0}3t^3
 +O_{t_1}(\eps|\log\eps|)
\]
uniformly for $t_1\le t\le d/2$.

Now \eqref{eq:arrival-sandwich} implies
\[
 -\eps\log f_+(t)
 \ge z_\eps(t,x)
 \ge-\eps\log\bigl(a_\eps+f_-(t)\bigr).
\]
Using
\[
 -\eps\log(a_\eps+f_-)
 \ge\min\{-\eps\log(2a_\eps),
          -\eps\log(2f_-)\},
\]
and the locally uniform convergence
$z_\eps\to z$ from Lemma~\ref{lem:log-control}(b),
we obtain, for every fixed $0<t<d/2$,
\[
 \min\left\{c_d,\frac12t^2-\frac{K_0}3t^3\right\}
 \le z(t,x)
 \le\frac12t^2+\frac{K_0}3t^3.
\]
For sufficiently small $t$, the second term
in the minimum is smaller than $c_d$.
This proves \eqref{eq:arrival-quadratic}, uniformly in $x$.

Finally, in normal geodesic coordinates,
$t=r/g(x)+O(r^2)$, with tangential relabeling
of order $O(r^2)$ because $\partial_t=gN$
on $\Sigma$. Substitution into
\eqref{eq:arrival-quadratic} yields
\eqref{eq:arrival-normal}.
\end{proof}

The proposition describes the initial position of the weak
flow levels. In particular, their intersections with short
outward normal rays have distance
$g(x)\sqrt{2\sigma}+O(\sigma)$ as $\sigma\downarrow0$,
without requiring the levels to be smooth normal graphs.

This differs from the capacitary boundary-layer limit.
For fixed $y>0$, the potential at $t=\sqrt\eps\,y$
converges to $P_+(y)$, uniformly in $x$ (Theorem~\ref{thm:global}(ii)), whereas
$-\log P_+(y)\ne y^2/2$.
The weak-flow limit fixes $t>0$ before taking
$\eps\downarrow0$; the boundary-layer limit keeps
$y=t/\sqrt\eps$ fixed.

\begin{remark}\label{rem:imcf-mechanism}
Proposition~\ref{prop:arrival-time} gives a geometric
interpretation of $Jg=g^{-1}$.
For a positive initial lapse $v$ with $Jv>0$,
the corresponding leaves are normal graphs of
$tv$ to first order, with mean curvature
\[
 H=t\,Jv+O(t^2).
\]
Formally, a flow with normal speed $1/H$
moves from $\{t\}$ to $\{t+\dd t\}$ in time
\[
 \dd\sigma=\ell H\dd t
 =\bigl(t\,vJv+O(t^2)\bigr)\dd t.
\]
Thus the leading arrival time is independent of
$x$ precisely when $vJv$ is constant, the same
condition that closes the Jensen gap
\eqref{eq:gap-Jensen}.
After rescaling $v$, this becomes $Jv=v^{-1}$.

For $v=g$, we obtain
$\sigma=t^2/2+O(t^3)$, in agreement with
Proposition~\ref{prop:arrival-time}.
Formally, the corresponding mean curvature is
\[
 H\approx\sqrt{2\sigma}\,Jg
 =\frac{\sqrt{2\sigma}}{g}.
\]
\end{remark}

\section*{Acknowledgements}

This research was supported by Global - Learning \& Academic
research institution for Master's$\cdot$PhD students, and Postdocs
(G-LAMP) Program of the National Research Foundation of Korea (NRF)
grant funded by the Ministry of Education
(No.~RS-2025-25442252).

Generative AI tools were used to assist with mathematical exploration, calculations, and manuscript preparation. The author reviewed and revised the resulting material.

% ===== bibliography.tex =====

\end{document}